\definecolor{purple}{rgb}{0.65, 0, 1}
\definecolor{orange}{rgb}{1,.5,0}
\definecolor{brown}{rgb}{.9,.73,.26}
\def\R{\hbox{\bf R}}
\def\Z{\hbox{\bf Z}}
\def\N{\hbox{\bf N}}
\def\e{\epsilon}
\newcommand{\abs}[1]{\left\lvert #1\right\rvert}
\newcommand{\norm}[1]{\left\lVert #1\right\rVert}
\newcommand{\lp}{\left(}
\newcommand{\rp}{\right)}
\newcommand{\half}{\frac{1}{2}}
\newcommand{\ba}{\begin{eqnarray}}
\newcommand{\ea}{\end{eqnarray}}
\newtheorem{theo}{\bf Theorem}[section]
\newtheorem{lem}[theo]{\bf Lemma}
\newtheorem{pro}[theo]{\bf Proposition}
\newtheorem{cor}[theo]{\bf Corollary}
\newtheorem{defi}[theo]{\bf Definition}
\newtheorem{rem}[theo]{\bf Remark}
\newcommand{\be}{\begin{equation}}
\newcommand{\ee}{\end{equation}}
\renewcommand{\N}{{\mathbb N}}
\renewcommand{\R}{{\mathbb R}}
\renewcommand{\Z}{{\mathbb Z}}
\renewcommand{\e}{\varepsilon}
\newenvironment{Proofc}[1]{\smallskip\par\noindent\textsc{#1}\quad}%
  {\hfill$\Box$\bigskip\par}
\begin{document}

\title{\bf Slow motion of particle systems as a limit\\ of a reaction-diffusion
 equation with \\ half-Laplacian in dimension one}
 \author{M.d.M. Gonz\'alez,\footnote{Universitat Polit\`ecnica de Catalunya, ETSEIB-MA1, Av. Diagonal 647, 08028 Barcelona, Spain.}\\
R. Monneau
\footnote{Universit\'e Paris-Est, Cermics, Ecole des Ponts ParisTech,
   6 et 8 avenue Blaise Pascal, Cit\'e Descartes, Champs-sur-Marne,
   77455 Marne-la-Vall\'ee Cedex 2.}
}
\maketitle

\vspace{20pt}



 \centerline{\small{\bf{Abstract}}}
{\small{We consider a reaction-diffusion equation with a
    half-Laplacian. In the case where the solution is independent on time,
    the model reduces to the Peierls-Nabarro model describing dislocations
    as transition layers in a phase field setting.
We introduce a suitable rescaling of the evolution equation, using a small
parameter $\varepsilon$. As $\varepsilon$ goes to zero, we show that the limit dynamics is
characterized by a system of ODEs describing the motion of particles with
two-body interactions. The interaction forces are in $1/x$ and correspond
to the well-known interaction between dislocations.}}\hfill\break

 \noindent{\small{\bf{AMS Classification:}}} {\small{35Q99, 35B40, 35J25, 35D30, 35G25, 70F99.}}\hfill\break
 \noindent{\small{\bf{Keywords:}}} {\small{non-local Allen-Cahn equation,
       reaction-diffusion equation, singular limit, Peierls-Nabarro model,
       transition layer, particle systems.}}\hfill\break

\section{Introduction}

Let us set the one-dimensional half-Laplacian operator $L$ defined on
functions $w\in L^\infty (\R)\cap C^{1,\beta}_{loc}(\R)$ for some
$0<\beta<1$ by the L\'evy-Khintchine formula
$$Lw(x)=\frac{1}{\pi} \int_{\R}\frac{dz}{z^2}\left(w(x+z)-w(x)-zw'(x) 1_{\left\{|z|\le
      1\right\}}\right).$$
Given a suitable $1$-periodic potential $W$ which satisfies in particular $W'(k)=0$ for $k\in\Z$ ($W$ will be precised later
in assumption \textbf{(A)}), it is known (see Section \ref{subsection-layer}) that there exists a unique solution $\phi$ of
\begin{equation}\label{eq::0}
\left\{\begin{array}{l}
L\phi -W'(\phi)=0 \quad \mbox{on}\quad \R,\\
\displaystyle{\phi'>0 \quad \mbox{and}\quad \phi(-\infty)=0,\quad \phi(0)=\frac 1 2,\quad \phi(+\infty)=1.}
\end{array}\right.
\end{equation}
Model \eqref{eq::0} is known as the Peierls-Nabarro model for dislocations
in crystals where $W$ is called the stacking fault
energy. Here $\phi(x)$ is a phase transition representation of a dislocation
localized at the position $x=0$.
For a physical introduction to the model, see for instance \cite{HL}; for a recent reference, see \cite{WXM}; we also refer the reader to the paper of Nabarro \cite{N} which presents an historical tour on the model.

\medskip

We now consider a scalar function $v$ solution of the following evolution problem
associated to the Peierls-Nabarro model:
\begin{equation}\label{eq::901}
v_t=Lv-W'(v) + \sigma^\varepsilon(t,x) \quad \mbox{on}\quad
(0,+\infty)\times \R
\end{equation}
where $\sigma^\varepsilon$ is the exterior stress acting in the material.
This equation has been for instance considered in \cite{MBW} (see also \cite{Denoual} for a similar model).
We assume that this exterior stress is given for $\varepsilon>0$ small, by
$$\sigma^\varepsilon(t,x)=\varepsilon\sigma\left(\frac{t}{\varepsilon^2},\frac{x}{\varepsilon}\right)$$
where $\sigma$ is a given suitable function.
Then we can consider the following rescaling
$$v^\varepsilon(t,x)=v\left(\frac{t}{\varepsilon^2},\frac{x}{\varepsilon}\right).$$
We now look for scalar functions  $v^\varepsilon$ satisfying
\begin{equation}\label{eq::1}
\left\{\begin{array}{l}
\displaystyle{v^\varepsilon_t=\frac{1}{\varepsilon}\left\{Lv^\varepsilon-\frac{1}{\varepsilon}W'(v^\varepsilon) +
  \sigma(t,x)\right\} \quad \mbox{on}\quad  (0,+\infty)\times \R}\\
\\
v^\varepsilon(0,x)=v^\varepsilon_0(x)  \quad \mbox{for}\quad x\in\R.
\end{array}\right.
\end{equation}

When $v^\varepsilon$ has $N$ transition layers as $\varepsilon$ goes to zero,
we will see that we can associate a particle to each transition layer such that
the dynamics of
$v^\varepsilon$ is driven by the following  ODE system for
particles $(x_i(t))_{i=1,...,N}$:
\begin{equation}\label{eq::2}
\left\{\begin{array}{lcl}
\displaystyle{\frac{dx_i}{dt}}= \gamma \left(-\sigma(t,x_i) + \frac{1}{\pi}\sum_{j\not= i} \frac{1}{x_i-x_j}\right),\quad
\mbox{on}\quad (0,+\infty),\\
x_i(0)=x_i^0,
\end{array}\right|  \quad
\mbox{for}\quad i=1,...,N
\end{equation}
where
\begin{equation}\label{eq::2'}
\gamma = \left(\int_{\R}(\phi')^2\right)^{-1}.
\end{equation}

\subsection{Statement of the main result}

To state precisely our main result, we make the following assumptions:\\
\noindent {\bf(A)}\\
\noindent {\bf i) Assumption on the potential}\\
$$\left\{\begin{array}{l}
W\in C^{2,\beta}(\R) \quad \mbox{for some}\quad 0<\beta<1,\\
\\
W(v+1)=W(v) \quad \mbox{for any}\quad v\in\R,\\
\\
W=0 \quad \mbox{on}\quad\Z,\\
\\
W>0 \quad \mbox{on}\quad\R\backslash  \Z,\\
\\
W''(0)>0.
\end{array}\right.$$
\noindent {\bf ii) Assumption on  the exterior stress}
$$\left\{\begin{array}{l}
\sigma \in BUC\left([0,+\infty )\times \R\right) \quad \mbox{and for some}\quad
0<\beta<1 \quad \mbox{and}\quad K>0, \quad \mbox{we have}\\
\\
|\sigma_x|_{L^\infty ([0,+\infty )\times \R)}\le K, \quad |\sigma_t|_{L^\infty
  ([0,+\infty )\times \R)}\le K,\\
\\
|\sigma_x(t,x+h)-\sigma_x(t,x)|\le K |h|^\beta \quad \mbox{for all}\quad
x,h\in\R,\quad t\in [0,+\infty ).
\end{array}\right.$$
We recall that BUC is the space of bounded uniformly continuous functions.\\
\noindent {\bf iii) Assumption on the initial condition}
\begin{equation}\label{eq::900}
\left\{\begin{array}{l}
\mbox{For}\quad   x_1^0< x_2^0<
...<x^0_N, \quad \mbox{we set}\\
\\
\displaystyle{v_0^\varepsilon(x)=\frac{\varepsilon}{\alpha}\sigma(0,x) + \sum_{i=1}^N
\phi\left(\frac{x-x_i^0}{\varepsilon}\right) \quad \mbox{with}\quad
\alpha=W''(0)>0.}
\end{array}\right.
\end{equation}

Let us now give our main result (which has been announced in \cite{EHIM}):

\begin{theo}\label{th::1}{\bf (Convergence to the ODE system)}\\
Under assumption (A), there exists a unique solution $(x_i(t))_{i=1,...,N}$
of (\ref{eq::2}). Let
$$v^0(t,x)= \sum_{i=1}^N H(x-x_i(t))$$
where $H(x)=1_{[0,+\infty)}(x)$ is the Heaviside function.
Then for any $\varepsilon >0$, there exists a unique viscosity solution
$v^\varepsilon$ of (\ref{eq::1}). Moreover as $\varepsilon \to 0$,
$v^\varepsilon$ converges to $v^0$ in the following sense:
\begin{equation}\label{eq::c+}
\limsup_{(t',x')\to(t,x),\ \e\to 0} v^{\e}(t',x')\leq (v^0)^* (t,x)
\end{equation}
and
\begin{equation}\label{eq::c-}
\liminf_{(t',x')\to(t,x),\ \e\to 0} v^{\e}(t',x')\geq (v^0)_* (t,x)
\end{equation}
for $(t,x)\in [0,+\infty )\times \R$.
\end{theo}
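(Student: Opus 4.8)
The plan is to prove Theorem \ref{th::1} by the method of viscosity solutions, constructing explicit sub- and super-solutions built out of the layer profile $\phi$ and the particle trajectories $x_i(t)$. First, existence and uniqueness of the ODE system \eqref{eq::2}: since the right-hand side is smooth away from collisions, local existence and uniqueness hold by Cauchy-Lipschitz; the key point is to rule out collisions in finite time. For this I would note that the self-interaction term $\frac{1}{\pi}\sum_{j\neq i}\frac{1}{x_i-x_j}$ has a sign that opposes collisions of the two innermost approaching particles (as two particles come together, the repulsive $1/x$ force blows up), while $\sigma$ is bounded; a Gronwall-type estimate on the minimal gap $\min_i (x_{i+1}-x_i)$, or a monotonicity/ordering argument, shows the particles stay strictly ordered and pairwise distinct for all $t>0$, hence global existence. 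Existence and uniqueness of the viscosity solution $v^\varepsilon$ of \eqref{eq::1} for fixed $\varepsilon>0$ should follow from the general theory for nonlocal (integro-differential) parabolic equations — a Perron-type construction plus a comparison principle for $L$ (available since $L$ satisfies the Lévy-Khintchine structure); I would cite the standard references for this and treat it as essentially known.

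The heart of the proof is the convergence \eqref{eq::c+}--\eqref{eq::c-}. I would use the standard half-relaxed limits: set $\overline v = \limsup^* v^\varepsilon$ and $\underline v = \liminf_* v^\varepsilon$, and aim to show $\overline v \le (v^0)^*$ and $\underline v \ge (v^0)_*$. The mechanism is to build, for each $\eta>0$, a smooth super-solution of \eqref{eq::1} of the form
\begin{equation}\label{eq::super-ansatz}
v^+_\varepsilon(t,x) = \frac{\varepsilon}{\alpha}\sigma(t,x) + \sum_{i=1}^N \phi\!\left(\frac{x - y_i(t)}{\varepsilon}\right) + \varepsilon\, C(t) + \text{(corrector)},
\end{equation}
where the modulated particle positions $y_i(t)$ solve a perturbed version of \eqref{eq::2} with the velocities pushed outward by $\eta$ (so that $y_i$ stays ahead of/behind the true $x_i$ in the appropriate direction), and $C(t)$ absorbs lower-order errors. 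Plugging \eqref{eq::super-ansatz} into the operator $\frac1\varepsilon\{L(\cdot) - \frac1\varepsilon W'(\cdot) + \sigma\}$ and Taylor-expanding $W'$ around $\phi$, the leading $O(1/\varepsilon^2)$ term vanishes because $L\phi - W'(\phi)=0$; the $O(1/\varepsilon)$ term is where the interaction force appears — one uses the asymptotics $\phi(x) \sim \frac{1}{2} + \frac{1}{\pi}\,\frac{?}{x}$-type tails of the Peierls-Nabarro layer (the slow $1/x$ decay of $\phi - H$ is exactly what produces the $\frac{1}{\pi}\sum_{j\neq i}\frac{1}{x_i-x_j}$ interaction and the factor $\gamma = (\int (\phi')^2)^{-1}$ via multiplication by $\phi'$ and integration), together with the choice of $\sigma$-corrector $\frac{\varepsilon}{\alpha}\sigma$ designed so that the linearization $W''(0)=\alpha$ cancels the stress contribution at the matching order. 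The remaining terms are $O(1)$ or smaller and are dominated by choosing $C(t)$ with $C'(t)$ large enough, and by the strict sign gained from the $\eta$-perturbation of the velocities. A symmetric construction gives a sub-solution $v^-_\varepsilon$. Then the comparison principle for \eqref{eq::1} yields $v^-_\varepsilon \le v^\varepsilon \le v^+_\varepsilon$, and letting $\varepsilon\to 0$ and then $\eta\to0$ squeezes $v^\varepsilon$ between profiles converging to $\sum_i H(x - x_i(t))$, giving \eqref{eq::c+}--\eqref{eq::c-}. The initial condition \eqref{eq::900} is tailored precisely so that $v^\pm_\varepsilon$ can be ordered against $v^\varepsilon_0$ at $t=0$.

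The main obstacle, I expect, is the rigorous treatment of the nonlocal interaction at the $O(1/\varepsilon)$ scale: because $L$ is the half-Laplacian, the layer $\phi$ decays only like $1/|x|$, so the tails are not integrable and the superposition $\sum_i \phi((x-y_i)/\varepsilon)$ interacts at long range; one must carefully split the integral defining $L$ into a near-field piece (handled by the single-layer equation and Taylor expansion) and a far-field piece (which, after rescaling, converges to the sum of Cauchy kernels $\frac{1}{\pi(x_i - x_j)}$), controlling the error uniformly in $\varepsilon$. A secondary technical difficulty is propagating the uniform bounds and modulus-of-continuity estimates on $\sigma$ (assumption (A)(ii)) through the corrector terms so that the ansatz \eqref{eq::super-ansatz} is genuinely $C^{1,\beta}_{loc}$ in $x$, hence an admissible test/competitor for the viscosity formulation, and checking the supersolution inequality also at points where the sum of Heaviside-like profiles has a jump, i.e. near the particle locations, where the doubling-of-variables argument must be run with care.
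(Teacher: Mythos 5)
Your proposal is correct in outline and follows essentially the same route as the paper: existence for the ODE via a Gronwall-type lower bound on the minimal inter-particle distance, Perron plus comparison for $v^\varepsilon$, and then sub/supersolutions built from the ansatz $\frac{\varepsilon}{\alpha}\sigma+\sum_i\phi\bigl(\frac{x-\overline{x}_i(t)}{\varepsilon}\bigr)-\varepsilon\dot{\overline{x}}_i\psi\bigl(\frac{x-\overline{x}_i(t)}{\varepsilon}\bigr)$ along $\delta$-perturbed trajectories, squeezed by the comparison principle. The only cosmetic differences are that the paper gains the strict sign through the $\delta$-shift of the velocities and of the stress term $\tilde\sigma=(\delta+\sigma)/\alpha$ rather than an added $\varepsilon C(t)$, and extracts the $\frac{1}{\pi(x_i-x_j)}$ interaction from the Taylor expansion of $W'$ applied to the superposed layers via the tail asymptotics $\phi-H\sim-\frac{1}{\alpha\pi x}$, rather than from a near/far-field splitting of $L$ itself.
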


\begin{rem}
We recall here that the semi-continuous envelopes of a given any function
$u$ are defined as follows
$$u^*(t,x)=\limsup_{(t',x')\to(t,x)} u(t',x')\quad\quad\mbox{and}\quad\quad u_*(t,x)=\liminf_{(t',x')\to(t,x)} u(t',x').$$
\end{rem}

The proof of Theorem \ref{th::1} is done by constructing suitable sub and
supersolutions essentially based on the following ansatz of the solution:
\begin{equation}\label{eq::an}
\tilde{v}^\varepsilon(t,x)=\frac{\varepsilon}{\alpha}\sigma(t,x)+\sum_{i=1}^N
\left\{ \phi\left(\frac{x-x_i(t)}{\varepsilon}\right)-\varepsilon
  \dot{x}_i(t)\  \psi\left(\frac{x-x_i(t)}{\varepsilon}\right) \right\}
\end{equation}
where $\psi$ is a suitable corrector.

\subsection{Brief review of the literature}

\bigskip

As we have mentioned, the diffusion equation \eqref{eq::1}
is the evolution equation for the Peierls-Nabarro model, which describes the dynamics of dislocation defects in crystal.
For general references, we refer the reader to \cite{Alvarez-Hoch-LeBouar-Monneau:dislocation-dynamics} (see also the paper  \cite{Toland:Peierls-Nabarro}).
The Peierls-Nabarro model can also be derived from Frenkel-Kontorova models (see \cite{FinoIM}).

In the time-independent setting, our model is related to a pioneering work of
Alberti-Boucchitt\'e-Seppecher.
They have shown  in \cite{Alberti-Bouchitte-Seppecher:singular-perturbations}  that if we consider the energy functional
for $\Omega\subset\mathbb R^n$ and a double well potential $W$ given by
$$E_\e[u]=\frac{1}{2}\int_{\Omega} \abs{\nabla u}^2+\frac{1}{\e}\int_{\partial\Omega}W(u)$$
then, in particular, the rescalings
$E_\e[u]/\abs{\log\e}$
Gamma-converge when $\e\to 0$ to $\frac{2}{\pi}\mathcal H^0(S_{u_0})$ if $n=2$ (thus $\partial\Omega$ is one dimensional) and the minimizers $u_\e$ converge to
$u_0$, where $u_0=0,1$ on $\partial\Omega$, $\Delta u_0=0$ in $\Omega$, and the singular set is
$$S_{u_0}=\left\{ z\in\partial\Omega \mbox{ : }u_0 \mbox{ jumps at }z \right\}.$$
A generalization to the case of elasticity equations with application to dislocations has been done by Garroni and M\"{u}ller
in \cite{Garroni-Muller:Gamma-limit-dislocations}.\\

For the time-dependent case, the (anisotropic) mean curvature motion has been obtained at the limit (in the framework of viscosity solutions) by Imbert and Souganidis \cite{IS}. Related to this result, let us also mention the work \cite{DaLio-Forcadel-Monneau:convergence}.

It is important to mention that a result similar to Theorem \ref{th::1} has been obtained for the non linear heat equation (after a suitable rescaling)
$$v_t =\varepsilon^2 v_{xx} - W'(v)$$
Here the interaction force between particles is also related to the behavior of the layer solution associated to the non linear heat equation, and hence is of exponential type.
For such results using an invariant manifold approach, we refer the reader to Carr-Pego \cite{Carr-Pego:metastable-patterns}, Fusco-Hale \cite{Fusco-Hale:slow-motion}, and Ei \cite{Ei} for generalization to systems of PDEs.
 For results using the energy approach, see Bronsard-Kohn \cite{Bronsard-Kohn:slowness}, Kalies, Van der Vorst, Wanner \cite{KVW}. Remark that our approach by sub and super solutions seems new (even in this context).
We also refer to the paper of Chen \cite{Chen}, for an interesting tour of the different regimes of the solution to the
non linear heat equation that appear for general initial data.

Let us mention that similar results have also been obtained for Cahn-Hilliard equations or their generalization to systems, called Cahn-Morral systems (see Grant \cite{Grant}).

We expect (even if we have no proof) Theorem \ref{th::1} to be true for any power of the Laplacian $(-\Delta)^\alpha$, $\alpha\in(0,1)$, once the layer solutions of \eqref{eq::0} are found. The existence of such layer solutions is the content of the work in preparation \cite{Cabre-Sire:layer-solutions}, that tries to generalize the work of Cabr\'e and Sol\`a-Morales  \cite{Cabre-SolaMorales:layer-solutions} for layer solutions of the half Laplacian. The extension problem (see \cite{Caffarelli-Silvestre}) that would be needed in this case is a degenerate elliptic equation with $A_2$ weight, that has been well understood in the classical reference \cite{Fabes-Kenig-Serapioni:local-regularity-degenerate}.

We should mention the result by Forcadel-Imbert-Monneau \cite{Forcadel-Imbert-Monneau:homogeneization}, where they homogenize in particular system (\ref{eq::2}) and more generally
non-local first order Hamilton-Jacobi equations describing the dynamics of dislocation lines. Similarly, homogenization results have been obtained in \cite{MP} for equation (\ref{eq::901}) for periodic stress $\sigma$.

\subsection{Organization of the paper}

In Section 2 we review of the notion and properties of viscosity solutions for this type of non-local equations. Section 3 contains all the preliminary results that will be required later (layer solutions, corrector) together with some motivation of our result and a study of the ODE system of particles \eqref{eq::2}. Section 4 is the proof of the main result (convergence is proven by finding suitable sub and supersolutions of the PDE).
Section 5 and Section 6 are almost independent of the rest of the paper: Section 5 is a necessary
review of known and further results (used in Section 6) for the half-Laplacian operator, while Section 6 contains the proof of the properties of the layer solution (Theorem \ref{th::5}) and of the existence of the corrector (Theorem \ref{th::6}).

\section{Viscosity solutions}

We recall the definition of a viscosity solution $v$ for equation (\ref{eq::1})
with $\varepsilon=1$ to simplify, namely for $T\in (0,+\infty )$:
\begin{equation}\label{eq::1ter}
\left\{
\begin{array}{l}
\displaystyle{v_t=Lv-W'(v) +
  \sigma(t,x) \quad \mbox{on}\quad  (0,T)\times \R}\\
v(0,x)=v_0(x)  \quad \mbox{for}\quad x\in\R.
\end{array}\right.
\end{equation}

\begin{defi}\label{defi::1}{\bf (Viscosity sub and supersolutions)}\\
An upper (respectively lower) semicontinuous function $v\in
L^\infty([0,T)\times \R)$ is said to be a
subsolution (resp. a supersolution) of (\ref{eq::1ter}) if and only if
$$v(0,\cdot)\le v_0^* \quad \mbox{on}\quad \R$$
$$\left(\mbox{resp.}\quad v(0,\cdot)\ge v_0^* \quad \mbox{on}\quad
\R\right)$$
and for any $C^2$ test function $\varphi$ satisfying
$$v= \varphi \quad \mbox{at the point}\quad (t_0,x_0)\in (0,T)\times \R$$
and
$$
v\le \varphi \quad \mbox{on a neighborhood of}\quad
\left\{t_0\right\}\times \R$$
$$\left(\mbox{resp.}\quad v\ge \varphi \quad \mbox{on a neighborhood of}\quad
\left\{t_0\right\}\times \R\right)$$
then we have
$$\varphi_t(t_0,x_0) \le L^1[\varphi(t_0,\cdot),v(t_0,\cdot)](x_0)
-W'(\varphi(t_0,x_0)) + \sigma(t_0,x_0)$$
$$\left(\mbox{resp.}\quad \varphi_t(t_0,x_0) \ge L^1[\varphi(t_0,\cdot),v(t_0,\cdot)](x_0)
-W'(\varphi(t_0,x_0)) + \sigma(t_0,x_0) \right)$$
where for any functions $\Phi(x), w(x)$ we set
$$L^1[\Phi,w](x_0)=\frac{1}{\pi}
\int_{[-1,1]}\frac{dz}{z^2}\left(\Phi(x_0+z)-\Phi(x_0)-z\Phi'(x_0)
\right)
 + \frac{1}{\pi}
\int_{\R\backslash [-1,1]}\frac{dz}{z^2}\left(w(x_0+z)-w(x_0)\right).$$
A function $v\in L^\infty([0,T)\times \R)$ is said to be a
viscosity solution of (\ref{eq::1ter}), if and only if $v^*$ is a subsolution and
$v_*$ is a supersolution.\\
Finally, a function $v\in L^\infty_{loc}([0,+\infty )\times \R)$ is said to
be a viscosity solution on $[0,+\infty)\times \R$, if and only if its is a
viscosity solution on $[0,T)\times \R$ for any $T>0$.

\end{defi}

\begin{rem} {\bf (Classical solutions are viscosity solutions)}\\
From this definition, we can easily check that every function
$v\in L^\infty([0,T)\times \R)\cap C^{1,\beta}_{loc}([0,T)\times \R)$ for
some $0<\beta<1$, is
indeed a viscosity solution if and only if it is a classical solution of
(\ref{eq::1ter}). This is mainly due to the fact that $v_t$ and $Lv(t,\cdot)$ are
then defined pointwise.
\end{rem}

Without entering in the details of the viscosity solutions (for which we refer
the interested reader the paper by Barles-Imbert \cite{Barles-Imbert:viscosity-solutions}), let us recall
the known results that we will use.

\begin{pro}\label{pro::v1}{\bf (Perron's method)}\\
Assume that there exists a supersolution $\underline{v}$ and a subsolution
$\overline{v}$ of equation (\ref{eq::1ter}) satisfying
$$\underline{v} \le \overline{v} \quad \mbox{on}\quad [0,T)\times \R.$$
Then there exists a viscosity solution $v$ of (\ref{eq::1ter}) satisfying
$$\underline{v} \le v\le \overline{v} \quad \mbox{on}\quad [0,T)\times \R.$$
\end{pro}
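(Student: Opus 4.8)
The plan is to follow Ishii's version of Perron's method, adapted to the nonlocal framework of \cite{Barles-Imbert:viscosity-solutions}. I would set
$$v(t,x) := \sup\left\{ w(t,x)\ :\ w \text{ is a subsolution of } \eqref{eq::1ter},\ \underline v \le w \le \overline v \text{ on } [0,T)\times\R\right\},$$
where, consistently with the ordering $\underline v\le\overline v$ and the desired conclusion, $\underline v$ is used as the lower barrier (a subsolution) and $\overline v$ as the upper barrier (a supersolution). The family is non-empty because $\underline v$ itself belongs to it, and since every admissible $w$ is bounded by $\overline v\in L^\infty$, $v$ is well defined, bounded, and satisfies $\underline v\le v\le\overline v$ pointwise, hence $\underline v_*\le v_*\le v\le v^*\le\overline v^*$. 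It then remains only to prove that $v^*$ is a subsolution and $v_*$ a supersolution of \eqref{eq::1ter}: once this is done, $v$ is by definition a viscosity solution and the barrier inequalities $\underline v \le v \le \overline v$ hold by construction.

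For the first point I would invoke the stability of viscosity subsolutions under supremum: the upper semicontinuous envelope of a supremum of subsolutions is again a subsolution. This fact carries over to the operator $L^1$ without modification, because $L^1[\Phi,w](x_0)$ is nondecreasing in its second argument $w$ and its singular local part involves only $\Phi$ in a bounded neighbourhood of $x_0$ — precisely the structure under which the stability machinery of \cite{Barles-Imbert:viscosity-solutions} applies. The only mildly delicate point is the initial time: one checks $v^*(0,\cdot)\le v_0^*$ from $w(0,\cdot)\le v_0^*$ for every admissible $w$, the upper barrier $\overline v$ providing the one-sided control as $t\to0^+$.

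The heart of the argument, which I expect to be the main obstacle, is to show that $v_*$ is a supersolution. I would argue by contradiction. If $v_*$ were not a supersolution there would exist $(t_0,x_0)\in(0,T)\times\R$ and a $C^2$ test function $\varphi$ touching $v_*$ from below at $(t_0,x_0)$, with $v_*\ge\varphi$ on a neighbourhood of $\{t_0\}\times\R$, such that
$$\varphi_t(t_0,x_0) < L^1[\varphi(t_0,\cdot),v_*(t_0,\cdot)](x_0) - W'(\varphi(t_0,x_0)) + \sigma(t_0,x_0).$$
Two cases arise. If $v_*(t_0,x_0)=\overline v(t_0,x_0)$, then using $v_*\le\overline v$ together with the lower semicontinuity of $\overline v$ one sees that $\varphi$ also touches $\overline v$ from below at $(t_0,x_0)$, which contradicts the supersolution property of $\overline v$. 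If instead $v_*(t_0,x_0)<\overline v(t_0,x_0)$, then for $\delta>0$ small enough $\varphi+\delta$ is a classical (hence viscosity) subsolution on a small parabolic neighbourhood $Q$ of $(t_0,x_0)$ and remains below $\overline v$ on $Q$; choosing $Q$ of spatial diameter small relative to the truncation radius $1$, so that the nonlocal part of $L^1$ never reaches the modification, the function equal to $\max(v,\varphi+\delta)$ on $Q$ and to $v$ elsewhere is, after passing to upper semicontinuous envelopes, again a subsolution of \eqref{eq::1ter} satisfying $\underline v\le\,\cdot\,\le\overline v$, yet strictly larger than $v$ at points near $(t_0,x_0)$ — contradicting the maximality of $v$. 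The technical care required to make this bump construction compatible with the nonlocal operator — a point where the asymmetric form of $L^1$ in Definition \ref{defi::1} (test function near $x_0$, ambient function far from $x_0$) is essential, and where I would lean on \cite{Barles-Imbert:viscosity-solutions} — is exactly what makes this last step the delicate one; the rest is routine.
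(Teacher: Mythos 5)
Your outline is essentially correct, but note that the paper itself offers no proof of this proposition: it is stated as a known result, with the reader referred to Barles--Imbert \cite{Barles-Imbert:viscosity-solutions} for the viscosity-solution machinery, so there is no internal argument to compare against. What you sketch is precisely the standard Ishii/Perron construction that the cited reference carries out in the nonlocal setting: the supremum of subsolutions between the barriers, stability of subsolutions under suprema for $v^*$, and the bump argument for $v_*$, where the monotonicity of $L^1[\Phi,w]$ in $w$ and the splitting of the operator into a local part (seen by the test function) and a tail (seen by the ambient function) make the classical proof go through. Two small points: the paper's statement has the words ``supersolution'' and ``subsolution'' interchanged relative to their use in the proof of Theorem \ref{th::1} (where $\underline{u}$ is the subsolution and $\overline{u}$ the supersolution), and you correctly read it in the standard way; and in the bump step one should perturb $\varphi$ to make the touching at $(t_0,x_0)$ strict (e.g.\ subtracting $|x-x_0|^2+|t-t_0|^2$) before adding $\delta$, so that the patched function coincides with $v$ near the parabolic boundary of $Q$ --- a routine refinement of what you wrote.
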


Let us also recall the comparison principle:

\begin{pro}\label{pro:4}{\bf (Comparison principle)}\\
Assume that $\underline{v}$ is a subsolution (resp. $\overline{v}$ is a supersolution) of equation
(\ref{eq::1ter}) on $[0,T)$ such that
$$\underline{v}(0,x)\le \overline{v}(0,x)\quad \mbox{for all}\quad  x\in \R.$$
Then
$$\underline{v}\le \overline{v}\quad \mbox{on}\quad  [0,T)\times \R.$$
\end{pro}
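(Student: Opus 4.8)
\proof
The plan is to argue by the doubling-of-variables method in the viscosity theory for integro-differential equations, in the spirit of \cite{Barles-Imbert:viscosity-solutions}. The only structural facts I would use are that $L$ is a linear operator of order one and that $W'$ is globally Lipschitz, with constant $L_0:=|W''|_{L^\infty(\R)}$ (finite since $W\in C^{2,\beta}$ is $1$-periodic). As $-W'$ need not be monotone, I would first remove this by conjugation: for $\lambda>L_0$ the functions $e^{-\lambda t}\underline{v}$ and $e^{-\lambda t}\overline{v}$ are, respectively, a sub- and a supersolution of an equation $\tilde v_t=L\tilde v+\tilde F(t,x,\tilde v)$ with $\partial_r\tilde F=-\lambda-W''(e^{\lambda t}r)<0$, and with the same ordering of the initial data; hence it suffices to treat a \emph{proper} equation ($r\mapsto-\tilde F(t,x,r)$ nondecreasing). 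Assume this, and argue by contradiction: suppose $M:=\sup_{[0,T)\times\R}(\underline{v}-\overline{v})>0$.

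For small $\e,\d,\eta,\beta>0$ one maximises on $[0,T)\times[0,T)\times\R\times\R$
$$\Psi(t,s,x,y)=\underline{v}(t,x)-\overline{v}(s,y)-\frac{|x-y|^2}{2\e}-\frac{|t-s|^2}{2\d}-\frac{\eta}{T-t}-\beta\lp\langle x\rangle+\langle y\rangle\rp,\quad\langle x\rangle:=(1+x^2)^{1/2}.$$
Since $\underline{v}-\overline{v}$ is bounded and upper semicontinuous and the last two terms blow up, the supremum is attained at an interior point $(\bar t,\bar s,\bar x,\bar y)$; the usual estimates give $|\bar x-\bar y|^2/\e\to0$, $|\bar t-\bar s|^2/\d\to0$, $\beta\langle\bar x\rangle,\beta\langle\bar y\rangle\to0$ and $\Psi(\bar t,\bar s,\bar x,\bar y)\to M$ as $\beta\to0$, then $\d\to0$, then $\e\to0$ ($\eta$ kept fixed). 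Using $\underline{v}(0,\cdot)\le\overline{v}(0,\cdot)$ one rules out $\bar t=0$ and $\bar s=0$ for the parameters small, so $\bar t\in(0,T)$ with $T-\bar t\ge\eta/(|\underline{v}|_{L^\infty}+|\overline{v}|_{L^\infty})$, and $\underline{v}(\bar t,\bar x)\ge\overline{v}(\bar s,\bar y)$.

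Next, write the viscosity inequalities at $(\bar t,\bar x)$ and $(\bar s,\bar y)$ with the natural $C^2$ test functions — the quadratics plus the $\eta$- and $\beta$-terms, which by maximality of $\Psi$ touch $\underline{v}$ from above, resp.\ $\overline{v}$ from below, globally in space — using the equivalent form of the definition in which the test function enters the L\'evy integral only over a small ball $\{|z|<\kappa\}$. Subtracting, the parabolic terms cancel up to the penalisation and leave $\eta/(T-\bar t)^2>0$ on the left. On the right: the zeroth-order difference $\tilde F(\bar t,\bar x,\underline{v}(\bar t,\bar x))-\tilde F(\bar s,\bar y,\overline{v}(\bar s,\bar y))$ is $\le0$ up to the modulus of continuity of $\tilde F$ in $(t,x)$ at scale $|\bar t-\bar s|+|\bar x-\bar y|$, by properness and $\underline{v}(\bar t,\bar x)\ge\overline{v}(\bar s,\bar y)$; and the nonlocal difference is split — on $\{|z|<\kappa\}$ it is controlled by the second differences of the smooth test functions, giving $O(\kappa/\e)+O(\beta)$, while on $\{|z|\ge\kappa\}$ it involves only $\underline{v}$ and $\overline{v}$ and one uses the diagonal consequence of maximality
$$\big[\underline{v}(\bar t,\bar x+z)-\underline{v}(\bar t,\bar x)\big]-\big[\overline{v}(\bar s,\bar y+z)-\overline{v}(\bar s,\bar y)\big]\le\beta\big((\langle\bar x+z\rangle-\langle\bar x\rangle)+(\langle\bar y+z\rangle-\langle\bar y\rangle)\big)$$
(the quadratic penalisation being invariant under $(x,y)\mapsto(x+z,y+z)$) for moderate $|z|$, together with the $L^\infty$ bound for large $|z|$. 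Balancing $\kappa$ and the outer cut-off radius against $\beta$, the nonlocal difference tends to $0$ as $\beta\to0$ with $\e,\d,\eta$ frozen; sending then $\d,\e\to0$ one is left with $\eta/T^2\le0$, a contradiction, so $\underline{v}\le\overline{v}$.

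I expect the real obstacle to be precisely this control of the nonlocal term in the doubling argument: the truncation radius of the definition (fixed here at $1$) has to be traded for a free one, and the singular part has to be balanced against the $L^\infty$ control of the tail — this is the heart of the integro-differential viscosity theory of \cite{Barles-Imbert:viscosity-solutions}, which I would invoke for the technical details. Combined with Perron's method (Proposition~\ref{pro::v1}), this comparison principle also yields existence and uniqueness of the viscosity solution of \eqref{eq::1ter}.\hfill $\Box$
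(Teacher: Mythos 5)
The paper does not actually prove this proposition: it is recalled as a known result, with the reader referred to the viscosity-solution theory of Barles--Imbert \cite{Barles-Imbert:viscosity-solutions}, so there is no in-paper argument to compare yours against step by step. Your doubling-of-variables sketch is precisely the standard route that underlies the cited theory (and it is compatible with the paper's Definition \ref{defi::1}, since your penalized test functions dominate $\underline{v}$, resp.\ are dominated by $\overline{v}$, globally in space near the touching time, as that definition requires); the reduction to a proper equation by the factor $e^{-\lambda t}$ with $\lambda>|W''|_{L^\infty}$, the use of the maximality inequality at shifted points for the tail of the L\'evy integral, and the balancing of the inner truncation radius and outer cut-off against $\beta$ are all sound. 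Two points where the sketch should be tightened if written out in full: (i) on the non-compact set $[0,T)\times[0,T)\times\R\times\R$ the supremum of $\Psi$ need not be attained as stated, because nothing penalizes $s\to T$ while $t$ stays away from $T$; either add a term $\eta/(T-s)$ (or work on $[0,T-\mu]^2$ and let $\mu\to0$) to get an interior maximum point; (ii) in the annulus $\kappa\le|z|\le 1$ the compensator terms $-z\,(\text{gradient difference})$ should be handled explicitly — they integrate to zero over symmetric annuli, and the $\beta\langle\cdot\rangle$ contributions there are $O(\beta)$ uniformly in $\kappa$ by a second-order Taylor bound, which is what makes your ``balance $\kappa$, the outer radius and $\beta$'' step come out to $o(1)$ as $\beta\to0$ without a $\log(1/\kappa)$ loss. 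With these standard adjustments the argument closes, and it is consistent with the paper's own stance of invoking \cite{Barles-Imbert:viscosity-solutions} for the technical core.
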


In particular, when the initial data $v_0$ is continuous, the comparison
principle implies the uniqueness and the continuity of the solution.

\section{Preliminary results}

In this section, we collect several results that will be used in the
section \ref{sect3} for the proof of our main result.

\subsection{Formal ansatz}

In the following, we try to explain formally why we have to expect an ansatz as in
\eqref{eq::an} for the solutions of \eqref{eq::1}.

\medskip

Namely, we are looking for an ansatz for the solutions of
\begin{equation}\label{eq::13}
\displaystyle{-\varepsilon
  v^\varepsilon_t=\frac{1}{\varepsilon}\left\{W'(v^\varepsilon)
  -\varepsilon Lv^\varepsilon - \varepsilon \sigma \right\}\quad \mbox{on}\quad  (0,+\infty)\times \R.}
\end{equation}
Assume to simplify that $\sigma$ is a constant here, and that there is only
one transition layer.\\

\noindent {\bf First try for the ansatz}\\
As a first try, in the case
where $\tilde{\sigma}$ is constant,
we could consider the Ansatz
$$\tilde{v}^\varepsilon(t,x)=\varepsilon \tilde{\sigma} +
\phi\left(\frac{x-x_1(t)}{\varepsilon}\right)$$
where $\phi$ is the
  transition layer solution of
\begin{equation}\label{eq::14}
L\phi-W'(\phi)=0.
\end{equation}
When we plug this expression in (\ref{eq::13}), we get
$$\begin{array}{lcl}
\dot{x}_1\phi' &\simeq &\displaystyle{\frac{1}{\varepsilon}\left\{W'(\varepsilon
  \tilde{\sigma} + \phi) - L\phi -\varepsilon \sigma\right\}}\\
\\
& \simeq &\displaystyle{\frac{1}{\varepsilon}\left\{W'(\varepsilon
  \tilde{\sigma} + \phi) - W'(\phi) -\varepsilon \sigma\right\}}\\
\\
& \simeq &  \displaystyle{W''(\phi) \tilde{\sigma}  -\sigma + O(\varepsilon)}\\
\end{array}$$
where we have used (\ref{eq::14}) for the second line, and a Taylor
  expansion to get the third line.

From this computation, we learn two things. First for $\phi \simeq 0$ we
  have to choose
\begin{equation}\label{eq::15}
W''(0) \tilde{\sigma}  =\sigma.
\end{equation}
The second information, is that it is impossible to satisfy the equation
with this ansatz, and we need to introduce a corrector.\\

\noindent {\bf Second try for the ansatz}\\
Let us take
$$\tilde{v}^\varepsilon(t,x)=\varepsilon \tilde{\sigma} +
\phi\left(\frac{x-x_1(t)}{\varepsilon}\right) -\varepsilon c
\psi\left(\frac{x-x_1(t)}{\varepsilon}\right)$$
for $\tilde{\sigma}$ satisfying (\ref{eq::15}) and for some constant $c$ to fix later.
When we plug this expression in (\ref{eq::13}), we get
$$\begin{array}{lcl}
\dot{x}_1\phi' +h.o.t. &\simeq &\displaystyle{\frac{1}{\varepsilon}\left\{W'(\varepsilon
  \tilde{\sigma} + \phi -\varepsilon c\psi) - L\phi +c\varepsilon L\psi -\varepsilon \sigma\right\}}\\
\\
& \simeq &\displaystyle{\frac{1}{\varepsilon}\left\{W'(\varepsilon
  \tilde{\sigma} + \phi -\varepsilon c\psi) - W'(\phi) +c\varepsilon L\psi
  -\varepsilon \sigma\right\}}\\
\\
& \simeq &  W''(\phi) (\tilde{\sigma}-c\psi)  -\sigma +c L\psi + h.o.t.
\end{array}$$
where we have used a Taylor expansion to get the last line. Finally using
(\ref{eq::15}), we see that $\psi$ has to solve
\begin{equation}\label{eq::16}
L\psi - W''(\phi)\psi = \frac{\dot{x}_1}{c} \phi' -
\frac{\tilde{\sigma}}{c}(W''(\phi)-W''(0)).
\end{equation}
We see that it is convenient to choose (in order to get later $\psi$ independent on $\dot{x}_1$)
$$\dot{x}_1=c.$$
Moreover, multiplying (\ref{eq::16}) by $\phi'$
we get formally by integration:
$$\begin{array}{lcl}
\displaystyle{\int_{\R} \phi'\left(L\psi - W''(\phi)\psi\right)} & = &\displaystyle{\int_{\R} (\phi')^2
-\frac{\tilde{\sigma}}{c} \left( \int_{\R} \frac{d}{dx}(W'(\phi(x)) -
  W''(0)\int_{\R} \phi'\right)}\\
\\
& = & \displaystyle{\int_{\R} (\phi')^2
+\frac{\tilde{\sigma}}{c} W''(0).}
\end{array}$$
On the other hand using the fact that $L$ is self-adjoint, we get formally
$$\begin{array}{lcl}
\displaystyle{\int_{\R} \phi'\left(L\psi - W''(\phi)\psi\right)} &= &\displaystyle{\int_{\R}
\psi\left(L\phi' - W''(\phi)\phi'\right)}\\
\\
&=& 0.
\end{array}$$
where the last equality comes from the fact that $\phi$ solves (\ref{eq::14}) so
then $\phi'$ solves
$$L\phi'-W''(\phi)\phi'=0.$$
Therefore, we get
$$c=-\gamma \sigma \quad \mbox{with}\quad \gamma=
\left(\int_{\R}(\phi')^2\right)^{-1}$$
and $\psi$ solves
$$L\psi - W''(\phi)\psi =  \phi' +\eta  (W''(\phi)-W''(0)) \quad
\mbox{with}\quad \eta= \frac{1}{\gamma W''(0)}.$$

\subsection{The layer solution and the corrector}\label{subsection-layer}

In this subsection, we state without proofs some results on the layer solution and the corrector.
These results will be proven in Section \ref{section-corrector}.

We recall that the layer solution satisfies (\ref{eq::0}), namely
\begin{equation}\label{eq::0bis}
\left\{\begin{array}{l}
L\phi -W'(\phi)=0 \quad \mbox{on}\quad \R,\\
\displaystyle{\phi'>0 \quad \mbox{and}\quad \phi(-\infty)=0,\quad \phi(0)=\frac12,\quad \phi(+\infty)=1}
\end{array}\right.
\end{equation}

Then we have
\begin{theo}\label{th::5}{\bf (Existence and properties of the layer solution)}\\
Under assumption Ai), there exists a unique solution $\phi$ of
(\ref{eq::0bis}).
Moreover $\phi \in C^{1,\beta}_{loc}(\R)$ for some $0<\beta <1$, and there exists a constant $C>0$ such that
\begin{equation}\label{eq::5}
0<\phi'(x)\le \frac{C}{1+x^2}  \quad \mbox{for all}\quad x\in\R
\end{equation}
and
\begin{equation}\label{eq::6}
\left|\phi(x)-H(x)+\frac{1}{\alpha \pi x}\right| \quad \le \quad \frac{C}{x^2}   \quad \mbox{for all}\quad |x|\ge 1
\end{equation}
where $H$ is the Heaviside function and $\alpha=W''(0)$.
\end{theo}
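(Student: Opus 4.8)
The natural route is through the Caffarelli--Silvestre extension. Since $L$ is minus the half-Laplacian, the relevant extension is the ordinary harmonic one: one seeks a bounded function $u=u(x,y)$ on $\R\times(0,+\infty)$, harmonic there, with $u(x,0)=\phi(x)$, so that $L\phi(x)=u_y(x,0)$ and $(\ref{eq::0bis})$ becomes the local semilinear Neumann problem
\[
\Delta u=0\ \text{ in }\ \R\times(0,+\infty),\qquad u_y(x,0)=W'(u(x,0))\ \text{ on }\ \R,
\]
together with $u(\cdot,0)$ strictly increasing and $u(-\infty,0)=0$, $u(0,0)=\tfrac12$, $u(+\infty,0)=1$. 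The whole toolbox needed to work with $L$ at this level --- trace and energy inequalities, interior and boundary Schauder estimates, Hopf lemma, maximum principle for $L-c$ with $c>0$ --- is exactly what the preliminary section on the half-Laplacian provides. For \textbf{existence} I would build the layer by a limiting procedure on bounded domains: minimise $\int_{C_R}\tfrac12|\nabla u|^2\,dx\,dy+\int_{(-R,R)\times\{0\}}W(u)\,dx$ over half-cylinders $C_R=(-R,R)\times(0,R)$ with $u=0$ on $\{x=-R\}$, $u=1$ on $\{x=R\}$ (natural conditions elsewhere), obtaining by truncation minimisers $u_R$ with $0\le u_R\le1$ which can be taken nondecreasing in $x$; assumption $W=0$ on $\Z$, $W>0$ on $\R\setminus\Z$ forces $u_R$ to connect $0$ and $1$, and the boundary Schauder estimates give a locally convergent subsequence whose limit $u$ is a classical solution, with $\phi:=u(\cdot,0)\in C^{1,\beta}_{loc}(\R)$; a translation normalises $\phi(0)=\tfrac12$. (Alternatively, one can write down ordered sub- and supersolutions modelled on $\tfrac12+\tfrac1\pi\arctan x$ and apply Perron's method.)

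For \textbf{monotonicity and uniqueness} I would use the sliding method on the extension. For $\tau>0$ put $u^\tau(x,y)=u(x+\tau,y)$; then $w=u^\tau-u$ is bounded, harmonic, and satisfies a linear Neumann condition $w_y(x,0)=c(x)w(x,0)$ with $c$ bounded, and $w\ge0$ for $\tau$ large because the limits of $u(\cdot,0)$ sit at the nondegenerate wells ($W''>0$ there). Decreasing $\tau$ to $0$ and using the maximum principle together with the Hopf lemma gives $u^\tau\ge u$ for all $\tau\ge0$, hence $\phi'=u_x(\cdot,0)>0$. Applying the same sliding to two solutions $\phi_1,\phi_2$ --- slide one until it first touches the other --- and invoking the strong maximum principle yields $\phi_1\equiv\phi_2$ once $\phi(0)=\tfrac12$ is fixed; this is precisely the uniqueness of layer solutions of Cabr\'e--Sol\`a-Morales for order-$1/2$ boundary reactions.

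The heart of the statement is the \textbf{decay}. For $(\ref{eq::5})$, differentiate the equation: $\psi:=\phi'>0$ solves the linear nonlocal equation $L\psi=W''(\phi)\psi$ with $\psi(\pm\infty)=0$, and since $W''(\phi)\to W''(0)=\alpha>0$ at $\pm\infty$, comparison with barriers built from power functions (using the explicit action of $L$ on them), together with a short bootstrap from an a priori slow-decay bound, forces the sharp rate $\psi(x)\le C/(1+x^2)$. Integrating yields the crude estimate $|\phi(x)-H(x)|\le C/|x|$ for $|x|\ge1$. To isolate the exact $1/x$-tail I would use the elementary identity
\[
LH(x)=-\frac{1}{\pi x}\qquad\text{for all }\ x\ne0,
\]
and set $R(x):=\phi(x)-H(x)+\dfrac{1}{\alpha\pi}\dfrac{\chi(x)}{x}$ with $\chi$ a smooth cutoff, $\chi\equiv1$ for $|x|\ge1$ and $\chi\equiv0$ near $0$. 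Plugging into $L\phi=W'(\phi)$, using the above identity, the expansion $W'(\phi)=\alpha(\phi-H)+O(|\phi-H|^{1+\beta})$ near $+\infty$ and near $-\infty$ (here $W''(1)=W''(0)=\alpha$ by $1$-periodicity of $W$), the crude bound, and the fact that $L$ applied to the truncated $1/x$ is $O(x^{-2})$ away from the origin, one finds that $R$ solves $LR=\alpha R+f$ on $\{|x|\ge1\}$ with a source $f$ decaying strictly faster than $1/|x|$, while $R$ is bounded with $R(\pm\infty)=0$; a barrier argument for $L-\alpha$ --- which obeys a good maximum principle because $\alpha>0$ --- then gives $(\ref{eq::6})$.

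The \textbf{main obstacle} is this last refinement. One must (i) carry out the explicit computations of $LH$ and of $L$ on the truncated $1/x$ to see the cancellation that produces the faster-decaying source $f$, which in particular fixes the coefficient $-1/(\alpha\pi)$; (ii) deal with the non-locality of $L$ across $x=0$, where $\phi-H$ jumps and $1/x$ is singular --- hence the cutoff $\chi$ and the need to control the error it introduces; and (iii) have quantitative barrier and maximum-principle estimates for $L-\alpha$ on half-lines, taken from the half-Laplacian toolbox. By contrast, the extension, existence and sliding arguments are routine once that toolbox is in place; the novelty and the difficulty of Theorem \ref{th::5} are concentrated in the sharp asymptotics $(\ref{eq::5})$--$(\ref{eq::6})$, which encode the $1/x$ interaction that ultimately drives the particle system.
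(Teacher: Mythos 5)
Your framework (harmonic extension, Cabr\'e--Sol\`a-Morales machinery) is the same as the paper's, but note that the paper does not re-prove existence, uniqueness, monotonicity or the derivative bound \eqref{eq::5}: all of these are imported from Theorem \ref{thm-layer-solutions} (Cabr\'e--Sol\`a-Morales), so your minimisation-on-half-cylinders and sliding arguments redo known work. The genuinely new content is \eqref{eq::6}, and there your route differs from the paper's: you peel off $H$ and a truncated $\tfrac{1}{\alpha\pi x}$ directly at the nonlocal level, using $LH(x)=-\tfrac{1}{\pi x}$, and invoke a maximum principle for $L-\alpha$ on $\{|x|\ge 1\}$; the paper instead sets $v=u_W-u^a$, where $u^a$ is the explicit Peierls--Nabarro layer \eqref{model-layer} with $a=1/\alpha$ (so that $V_a''(0)=W''(0)$), derives a Robin condition $\partial_\nu v+\alpha v=M$ with decaying $M$, and concludes by Corollary \ref{cor-comparison}. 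Your coefficient matching is the same cancellation as the paper's choice $a=1/\alpha$, so the setup is right.

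The gap is in your step (iii), which you attribute to the ``half-Laplacian toolbox'': a quantitative barrier estimate for $L-\alpha$ at the $x^{-2}$ rate is not in that toolbox and does not follow from $\alpha>0$ alone. Because $L$ is nonlocal, a positive barrier $b$ with an $x^{-2}$ tail satisfies $Lb(x)\sim \tfrac{1}{\pi x^2}\int_{\R}b$ at infinity, hence $(L-\alpha)b\sim \left(\tfrac{1}{\pi}\int_{\R}b-\alpha\lim_{|x|\to\infty}x^2b(x)\right)x^{-2}$; for the natural choice $b=K/(1+x^2)$ this equals $(1-\alpha)K x^{-2}+o(x^{-2})$, which has the wrong sign whenever $\alpha=W''(0)<1$, and since $b$ must also dominate $\|R\|_{L^\infty}$ on $[-1,1]$ (the ``boundary datum'' of the nonlocal problem posed on $\{|x|\ge1\}$), one cannot in general arrange $\tfrac1\pi\int b<\alpha\lim x^2 b$ when $\alpha$ is small. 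So the comparison does not close with a generic power barrier: the barrier must encode the exact balance between $L$ and the zero-order term at the $x^{-2}$ rate, and this is precisely what the paper's argument supplies by taking the barrier to be $u^a_x$, the derivative of the explicit model layer, through the pointwise identity \eqref{computation-layer} combined with the strong comparison principle of Lemma \ref{lema-comparison}. That comparison with the explicit layer is the missing idea in your sketch (and the same subtlety is hidden in your proposed barrier proof of \eqref{eq::5}, which the paper simply cites). A secondary point: under A(i) the Taylor remainder of $W'$ is only $O(|x|^{-(1+\beta)})$ given the crude bound \eqref{growth-not-sharp}, so a source ``decaying strictly faster than $1/|x|$'' does not by itself yield the stated $C/x^2$; to reach the claimed rate you need the same quantitative bookkeeping on the right-hand side as in the paper's estimate of $M$.
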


\medskip

We are also looking for a corrector $\psi$, solution of the following
equation
\begin{equation}\label{eq::7}
\left\{\begin{array}{l}
L\psi -W''(\phi)\psi=\phi'+ \eta (W''(\phi)-W''(0)) \quad \mbox{on}\quad \R,\\
\psi(-\infty)=0 = \psi(+\infty)
\end{array}\right.
\end{equation}
with
\begin{equation}\label{eq::7'}
\eta = \frac{\int_{\R}(\phi')^2}{W''(0)}.
\end{equation}

Then we have
\begin{theo}\label{th::6}{\bf (Existence and properties of the corrector)}\\
Under assumption Ai), there exists a unique solution $\psi \in
H^{\frac12}(\R)$ of (\ref{eq::7}).
Moreover $\psi \in C^{1,\beta}_{loc}(\R)\cap L^\infty (\R)$ for some $0<\beta< 1$ and
$$|\psi'|_{L^\infty(\R)} < +\infty. $$
\end{theo}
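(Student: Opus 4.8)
The plan is to solve the linear nonlocal equation \eqref{eq::7} by a variational argument in the Hilbert space $H^{1/2}(\R)$, and then bootstrap regularity and decay. First I would set up the bilinear form
$$a(u,w)=\frac{1}{\pi}\int_{\R}\int_{\R}\frac{(u(x)-u(y))(w(x)-w(y))}{(x-y)^2}\,dx\,dy+\int_{\R}W''(\phi)\,u\,w,$$
which is (up to constants) the quadratic form of $L+W''(\phi)$ on $H^{1/2}(\R)$, noting that the Gagliardo seminorm is comparable to $\|(-\Delta)^{1/4}u\|_{L^2}^2$. The first key step is coercivity of $a$ on $H^{1/2}(\R)$: since $W''(0)=\alpha>0$ and $W''(\phi(x))\to\alpha$ as $x\to\pm\infty$ by \eqref{eq::0bis}, the potential $W''(\phi)$ is bounded, continuous, and bounded below away from zero outside a compact set; a compactness/spectral argument (or a direct estimate using that the only obstruction would be an $L^2$ kernel element, which is ruled out below) gives $a(u,u)\ge c\|u\|_{H^{1/2}}^2$. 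The right-hand side $f:=\phi'+\eta(W''(\phi)-W''(0))$ lies in $L^2(\R)$ by the decay estimate \eqref{eq::5} on $\phi'$ and the fact that $W''(\phi)-W''(0)=O(\phi-H)=O(1/|x|)$ from \eqref{eq::6}, so $u\mapsto\int f u$ is a bounded linear functional on $H^{1/2}$; wait—$1/|x|$ is not square integrable, so I would instead write $W''(\phi)-W''(0)=W''(\phi)-W''(H)$ piecewise and use that $\phi-H\in L^2$ near infinity only marginally fails, hence one should pair against the $L^\infty\cap H^{1/2}$ test functions more carefully, or absorb the slowly-decaying part into the operator. The cleanest route: check directly that $f\in L^2$ fails only logarithmically, so instead use that $\phi'\in L^2$ and that the remaining term has a primitive controlled via integration by parts against $u'$; alternatively, and most robustly, represent $\psi$ through the extension problem of \cite{Caffarelli-Silvestre} and solve a local degenerate-elliptic Neumann problem in the half-plane, where the weighted Lax-Milgram applies with the same coercivity input.

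Granting existence of a weak solution $\psi\in H^{1/2}(\R)$, the second step is \textbf{uniqueness}: if $\psi_1,\psi_2$ both solve, their difference $w$ satisfies $Lw-W''(\phi)w=0$ with $w\in H^{1/2}$, hence $a(w,w)=0$, hence $w=0$ by coercivity. (This simultaneously confirms that $\phi'$, which solves the homogeneous equation but is \emph{not} in $L^2$—only in the weighted sense—does not destroy uniqueness in the chosen space.) The third step is \textbf{interior regularity}: the equation reads $L\psi=W''(\phi)\psi+f$ with right-hand side in $L^\infty_{loc}$, because $\psi\in H^{1/2}\hookrightarrow L^p_{loc}$ and one bootstraps using the Schauder-type estimates for the half-Laplacian recalled in Section 5; since $W''(\phi),f\in C^{\beta}_{loc}$ (using $W\in C^{2,\beta}$ and $\phi\in C^{1,\beta}_{loc}$), elliptic regularity gives $\psi\in C^{1,\beta}_{loc}(\R)$.

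The fourth and main step is the \textbf{global bound $\psi\in L^\infty(\R)$ and $\psi(\pm\infty)=0$}, together with $|\psi'|_{L^\infty}<\infty$. For boundedness at infinity I would use that for $|x|$ large the equation is a small perturbation of the constant-coefficient equation $L\psi-\alpha\psi=f$ with $f\to0$; constructing explicit super/subsolutions of the form $\pm(A/(1+|x|))$ or $\pm A e^{-\delta\sqrt{1+x^2}}$-type barriers and invoking the comparison principle for $L-\alpha$ (which has a sign-definite zeroth order term, so the maximum principle is available) pins down the decay of $\psi$, forcing $\psi(\pm\infty)=0$ and $\psi\in L^\infty$. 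Then $|\psi'|_{L^\infty}<\infty$ follows from the $C^{1,\beta}$ estimate applied on unit balls with the now-uniform $L^\infty$ bound on $\psi$ and on the right-hand side. I expect the coercivity/right-hand-side integrability interplay in the first step—reconciling that $\phi'$ solves the homogeneous problem but only barely fails to lie in $L^2$, and that the inhomogeneous term decays like $1/|x|$—to be the main obstacle; the resolution is to work with the correct functional-analytic setting (the homogeneous $\dot H^{1/2}$ / extension formulation, where $\phi'$ genuinely sits in the form domain) so that Fredholm alternative applies and the solvability condition is exactly the orthogonality relation $\int f\phi'=0$ that was verified formally in Section 3.1 via the choice \eqref{eq::7'} of $\eta$.
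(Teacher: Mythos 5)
There is a genuine gap at the heart of your argument: the coercivity you claim in Step 1 is false as stated. The function $\phi'$ belongs to $L^2(\R)$ and indeed to $H^{\frac12}(\R)$ — by \eqref{eq::5} it decays like $(1+x^2)^{-1}$, and Lemma \ref{properties} shows its harmonic extension lies in $H$ — and it satisfies the homogeneous equation $L\phi'-W''(\phi)\phi'=0$, so $a(\phi',\phi')=0$ with $\phi'\not\equiv 0$. Hence no inequality $a(u,u)\ge c\norm{u}_{H^{1/2}}^2$ can hold on all of $H^{\frac12}(\R)$, and your parenthetical justification (``the only obstruction would be an $L^2$ kernel element, which is ruled out below'') as well as your later assertion that $\phi'$ ``is not in $L^2$'' are factually incorrect. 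This also invalidates the uniqueness step as written (uniqueness can only hold modulo $\phi'$, i.e.\ on the orthogonal complement). The paper's proof is organized precisely around this obstruction: it proves coercivity of $B$ only on the subspace $H_0$ of extensions whose trace is $L^2$-orthogonal to $\phi'$ (Lemma \ref{lem:1} and Corollary \ref{cor-coercivity}), and this is the hard part — it uses the stability inequality \eqref{stable-solution} of the layer solution, a constrained minimization with a concentration–compactness-type splitting $Q=Q_1+Q_2$, and a sliding/strong maximum principle argument (Lemma \ref{lema-comparison}) to show that a minimizer with zero multiplier must be a multiple of $\phi'$, contradicting the orthogonality constraint. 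Existence then comes from Lax–Milgram on $H_0$, and the identity extends to all test functions only because of the compatibility condition $\int_{\R} g\,\phi'=0$ guaranteed by the choice \eqref{eq::7'} of $\eta$. You do gesture at exactly this resolution (``Fredholm alternative \ldots\ solvability condition $\int f\phi'=0$'') in your closing sentence, but it is never carried out, and the main body of your proof contradicts it; a spectral-theoretic variant (0 is the ground state of $\sqrt{-\Delta}+W''(\phi)$ with eigenfunction $\phi'>0$, essential spectrum bounded below by $W''(0)>0$) could replace Lemma \ref{lem:1}, but it likewise requires both the stability input and the restriction to $\{\phi'\}^\perp$, neither of which your sketch supplies.

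On the remaining steps your outline is closer to the paper but still loose. The integrability worry you raise for $g=\phi'+\eta(W''(\phi)-W''(0))$ is left unresolved (you propose three different fixes without choosing one); the paper asserts $g\in H^{\frac12}(\R)$ for the variational step and, for boundedness of $\psi$, uses only that $g\in L^p(\R)$ uniformly for large $p$ (decay $|x|^{-\beta}$ from the H\"older continuity of $W''$ and \eqref{eq::6}), feeding this into a Moser iteration (Proposition \ref{prop-moser-iteration}, Corollary \ref{cor-Moser}) rather than into barrier constructions; the $C^{1,\beta}_{loc}$ regularity, the global bound on $\psi'$, and $\psi(\pm\infty)=0$ then follow from Proposition \ref{prop-regularity}, Corollary \ref{cor-regularity} and $\psi\in L^2(\R)$. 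Your barrier-and-comparison route for decay at infinity is a plausible alternative for that last step, but it does not repair the missing coercivity-on-$H_0$ argument, which is the essential content of the theorem's proof.
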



\subsection{The ODE system}

Recall that we consider a solution $(x_i(t))_{i=1,...,N}$ of (\ref{eq::2}),
namely
\begin{equation}\label{eq::2bis}
\left\{\begin{array}{lcl}
\displaystyle{\frac{dx_i}{dt}}= \gamma \left(-\sigma(x_i,t) + \frac{1}{\pi}\sum_{j\not= i} \frac{1}{x_i-x_j}\right),\quad
\mbox{on}\quad (0,+\infty),\\
x_i(0)=x_i^0,
\end{array}\right|  \quad
\mbox{for}\quad i=1,...,N
\end{equation}
where $\gamma >0$ is given in (\ref{eq::2'}).

We recall the following result

\begin{lem}\label{lem::1}{\bf (Lower bound on the distance between particles, \cite{Forcadel-Imbert-Monneau:homogeneization})}\\
Let $(x_i(t))_{i=1,...,N}$ be the solution of (\ref{eq::2bis}) on $[0,T]$ and let
$$d(t):=\min\{\abs{x_i(t)-x_j(t)},i\neq j\}$$
be the minimal distance between particles.
Then under assumptions Ai) and Aii), we have for all $t\in [0,T]$
$$d(t)\ge d(0) e^{-Ct} \quad \mbox{with}\quad C=\gamma K$$
where $K$ is the space Lipschitz constant of $\sigma$.
\end{lem}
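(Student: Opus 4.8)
The plan is to examine the evolution of the minimal distance $d(t)$ directly, using the structure of the interaction term, which is attractive only in a way that helps us. Fix $t$ and choose indices $i<j$ realizing the minimum, so that $d(t)=x_j(t)-x_i(t)$ and all particles with index strictly between $i$ and $j$ (if any) lie in the interval $[x_i(t),x_j(t)]$, in fact at mutual distances at least $d(t)$. Since $d$ is a minimum of finitely many Lipschitz functions, it is Lipschitz, hence differentiable a.e., and at a point of differentiability where the pair $(i,j)$ is active we have
\begin{equation}\label{eq::ddot}
\frac{d}{dt}d(t)=\dot x_j(t)-\dot x_i(t)=\gamma\Big(\sigma(t,x_i)-\sigma(t,x_j)\Big)+\frac{\gamma}{\pi}\sum_{k\neq j}\frac{1}{x_j-x_k}-\frac{\gamma}{\pi}\sum_{k\neq i}\frac{1}{x_i-x_k}.
\end{equation}
The first parenthesis is bounded below by $-\gamma K\,|x_j-x_i|=-\gamma K\, d(t)$ using the Lipschitz bound $|\sigma_x|\le K$ from assumption Aii). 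The heart of the matter is to show the remaining sum of Coulomb terms is nonnegative.

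For the interaction sum, I would group the terms by comparing, for each index $k\notin\{i,j\}$, the contribution $\frac{1}{x_j-x_k}-\frac{1}{x_i-x_k}=\frac{x_i-x_j}{(x_j-x_k)(x_i-x_k)}=\frac{-d(t)}{(x_j-x_k)(x_i-x_k)}$. If $x_k<x_i<x_j$ or $x_i<x_j<x_k$, both factors in the denominator have the same sign, so this contribution is negative — which at first looks bad. The key observation is that we must also keep the two ``diagonal'' terms: the term $k=i$ in the sum over $k\neq j$ gives $\frac{1}{x_j-x_i}=\frac{1}{d(t)}$, and the term $k=j$ in the sum over $k\neq i$ gives $\frac{1}{x_i-x_j}=-\frac{1}{d(t)}$, so subtracting produces $+\frac{2}{d(t)}$, a large positive term. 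For any $k$ with $x_k$ outside $[x_i,x_j]$, say $x_k>x_j$ (the case $x_k<x_i$ is symmetric), minimality of $d$ gives $x_k-x_j\ge d(t)$ and $x_k-x_i\ge d(t)$, so $\big|\frac{1}{x_j-x_k}-\frac{1}{x_i-x_k}\big|=\frac{d(t)}{(x_k-x_j)(x_k-x_i)}\le\frac{d(t)}{d(t)^2}\cdot\frac{x_k-x_i}{x_k-x_j}\cdot\frac{1}{x_k-x_i}$; more simply $\frac{d(t)}{(x_k-x_j)(x_k-x_i)}\le \frac{1}{x_k-x_i}-\frac{1}{x_k-x_j}$ is not quite it — rather I would telescope: writing the outside particles on the right as $x_j<x_{k_1}<x_{k_2}<\cdots$, the sum $\sum\big(\frac{1}{x_{k_m}-x_i}-\frac{1}{x_{k_m}-x_j}\big)$ telescopes against consecutive gaps and is dominated by $\frac{1}{x_{k_1}-x_j}\le\frac{1}{d(t)}$. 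For $k$ strictly between $i$ and $j$: then $x_j-x_k\ge d(t)>0$ and $x_i-x_k\le -d(t)<0$, so $\frac{1}{x_j-x_k}-\frac{1}{x_i-x_k}=\frac{1}{x_j-x_k}+\frac{1}{x_k-x_i}\ge 0$, these terms help. Collecting: the interior terms are $\ge0$, the two diagonal terms give $+\frac{2}{d(t)}$, and the exterior terms on each side are bounded below by $-\frac{1}{d(t)}$, so the total Coulomb contribution in \eqref{eq::ddot} is $\ge \frac{\gamma}{\pi}\big(\frac{2}{d(t)}-\frac{1}{d(t)}-\frac{1}{d(t)}\big)=0$.

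Combining, at a.e.\ $t$ we obtain $\frac{d}{dt}d(t)\ge -\gamma K\, d(t)$, hence $\frac{d}{dt}\big(e^{\gamma K t}d(t)\big)\ge0$ in the sense of distributions on the (open, full-measure) set where $d$ is differentiable; since $e^{\gamma K t}d(t)$ is Lipschitz it is then nondecreasing, giving $d(t)\ge d(0)e^{-\gamma K t}$, which is the claim with $C=\gamma K$. I expect the main obstacle to be the bookkeeping for the exterior particles: one must be careful that the negative contributions from all particles lying beyond $x_j$ (resp.\ below $x_i$) do not accumulate, and the clean way is the telescoping estimate sketched above, using only the minimal-gap bound $d(t)$ between each consecutive pair. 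A secondary (routine) technical point is justifying the a.e.\ differentiability argument and the switching of active index pairs — this is standard for minima of finitely many Lipschitz functions, and one only needs the differential inequality to hold a.e.\ together with absolute continuity of $e^{\gamma K t}d(t)$.
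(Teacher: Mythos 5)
The paper itself does not prove this lemma --- it is simply quoted from \cite{Forcadel-Imbert-Monneau:homogeneization} --- so there is no internal proof to compare against; judged on its own, your argument is correct and is the expected one: a differential inequality for the minimal gap closed by Gronwall. The three essential points all check out: (i) since $d$ is a minimum of finitely many $C^1$ functions it is Lipschitz, and at a.e.\ point of differentiability $d'(t)=\dot x_j-\dot x_i$ for an active pair, which is necessarily consecutive (so your ``interior'' particles case is in fact vacuous, though harmless); (ii) the stress difference is bounded below by $-\gamma K d(t)$ by Aii); (iii) the Coulomb sum is nonnegative, since the two diagonal terms contribute $+\frac{2}{d(t)}$ while the particles beyond $x_j$ (and symmetrically those below $x_i$) contribute in absolute value at most
$$\sum_{m}\left(\frac{1}{x_{k_m}-x_j}-\frac{1}{x_{k_m}-x_i}\right)\le \frac{1}{x_{k_1}-x_j}\le\frac{1}{d(t)},$$
the telescoping being legitimate because $x_{k_{m+1}}-x_{k_m}\ge d(t)=x_j-x_i$ gives $\frac{1}{x_{k_{m+1}}-x_j}\le\frac{1}{x_{k_m}-x_i}$, so the cross terms pair off with the right sign. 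Two presentational slips should be removed in a final write-up: the abandoned intermediate estimate for a single exterior particle (which, as written, is not correct and is anyway superseded by the telescoping), and the phrase ``in the sense of distributions on the (open, full-measure) set where $d$ is differentiable'' --- the differentiability set need not be open, and the correct, sufficient justification is simply that $e^{\gamma K t}d(t)$ is Lipschitz, hence absolutely continuous, with a.e.\ derivative $\ge 0$, hence nondecreasing. With those wording fixes the proof is complete and yields exactly $C=\gamma K$.
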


This lemma prevents the crossing of particles in finite time. As a consequence, we easily get the following long time
existence result:
\begin{cor}\label{cor::1}{\bf (Long time
existence of a solution to the ODE system)}\\
Under assumption (A), there exists a unique solution $(x_i(t))_{i=1,...,N}$
of (\ref{eq::2bis}) on $[0,+\infty)$.
\end{cor}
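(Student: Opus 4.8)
\textbf{Proof proposal for Corollary \ref{cor::1}.}

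The plan is to combine the local existence-uniqueness theory for ODEs with the a priori bound of Lemma \ref{lem::1} to rule out finite-time blow-up of any kind. First I would observe that the vector field on the right-hand side of (\ref{eq::2bis}), namely
$$F_i(t,x_1,\dots,x_N)=\gamma\left(-\sigma(t,x_i)+\frac{1}{\pi}\sum_{j\neq i}\frac{1}{x_i-x_j}\right),$$
is of class $C^1$ (indeed locally Lipschitz, using that $\sigma_x\in L^\infty$ by Aii)) on the open set $U=\{x_1<x_2<\dots<x_N\}$ where no two particles collide. Since the prescribed initial condition satisfies $x_1^0<x_2^0<\dots<x_N^0$, it lies in $U$, so by the Cauchy-Lipschitz (Picard-Lindel\"of) theorem there is a unique maximal solution $(x_i(t))_{i=1,\dots,N}$ defined on a maximal interval $[0,T_{\max})$ with $T_{\max}\in(0,+\infty]$, remaining in $U$ for all $t<T_{\max}$.

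Next I would suppose, for contradiction, that $T_{\max}<+\infty$. The maximal-interval criterion then forces the trajectory to leave every compact subset of $U$ as $t\uparrow T_{\max}$: either some $|x_i(t)|\to+\infty$, or the minimal gap $d(t)=\min_{i\neq j}|x_i(t)-x_j(t)|$ tends to $0$. To exclude the second possibility, apply Lemma \ref{lem::1}, which gives $d(t)\ge d(0)e^{-Ct}\ge d(0)e^{-CT_{\max}}>0$ for all $t\in[0,T_{\max})$, since $d(0)=\min_{i\neq j}|x_i^0-x_j^0|>0$; hence the particles cannot collide in finite time. To exclude the first possibility, note that once $d(t)\ge \delta_0:=d(0)e^{-CT_{\max}}$ the interaction sum is bounded by $\frac{N}{\pi\delta_0}$ in absolute value, and $|\sigma|\le \|\sigma\|_{L^\infty}<\infty$ by Aii) (since $\sigma\in BUC$), so $|F_i|\le \gamma(\|\sigma\|_{L^\infty}+\frac{N}{\pi\delta_0})=:M$ uniformly on $[0,T_{\max})$. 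Consequently $|x_i(t)|\le |x_i^0|+MT_{\max}$ stays bounded, and the trajectory in fact remains in a fixed compact subset of $U$, contradicting the blow-up alternative. Therefore $T_{\max}=+\infty$, and ordering of the particles is preserved for all time.

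Finally, uniqueness on $[0,+\infty)$ follows at once from the local uniqueness in Cauchy-Lipschitz together with the standard continuation argument: any two solutions agree on a neighbourhood of $0$, and the set of times where they agree is open and closed in $[0,+\infty)$, hence equals $[0,+\infty)$. I do not anticipate a genuine obstacle here; the only substantive input is Lemma \ref{lem::1}, which is quoted, and the mild boundedness of $\sigma$ from assumption Aii). The one point requiring a little care is making the blow-up alternative precise — i.e. that a maximal ODE solution in an open set either exists globally or escapes every compact subset — but this is entirely classical.

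<hfill$\Box$
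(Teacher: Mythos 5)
Your proposal is correct and follows essentially the same route as the paper, which derives the corollary directly from Lemma \ref{lem::1}: the lower bound on the inter-particle distance rules out collisions, and the resulting bound on the velocities (using the boundedness of $\sigma$ from Aii)) rules out escape to infinity, so the maximal Cauchy--Lipschitz solution is global. The paper leaves these standard continuation details implicit ("we easily get"), and your write-up simply fills them in correctly.
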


\section{The convergence result: proof of Theorem \ref{th::1}}\label{sect3}

We want to build a supersolution with a parameter $\delta>0$ to fix
later. We define $(\overline{x}_i(t))_{i=1,...,N}$ as the solution of
$$\left\{\begin{array}{l}
\displaystyle{\frac{d\overline{x}_i}{dt}=
\gamma \left(-\delta-\sigma(\overline{x}_i,t) + \frac{1}{\pi}\sum_{j\not= i} \frac{1}{\overline{x}_i-\overline{x}_j}\right),\quad
\mbox{on}\quad (0,+\infty)},\\
\overline{x}_i(0)=x_i^0-\delta,
\end{array}\right|  \quad
\mbox{for}\quad i=1,...,N$$
with $\gamma$ given in  (\ref{eq::2'}).\\
We also define
$$\overline{v}^\varepsilon(t,x)=\varepsilon \tilde{\sigma}(t,x) +
\sum_{i=1}^N
\left\{\phi\left(\frac{x-\overline{x}_i(t)}{\varepsilon}\right)
  -\varepsilon\overline{c}_i(t)\psi\left(\frac{x-\overline{x}_i(t)}{\varepsilon}\right) \right\}$$
where
$$\left\{\begin{array}{l}
\overline{c}_i(t) = \dot{\overline{x}}_i(t)\\
\\
\displaystyle{\tilde{\sigma}=\frac{\delta+\sigma}{\alpha} \quad \mbox{with}\quad
\alpha=W''(0)>0}
\end{array}\right.$$
and $\phi$ is given in Theorem \ref{th::5} and $\psi$ in Theorem
\ref{th::6}.\\

Then we have
\begin{pro}\label{pro::2}{\bf (Supersolution at the initial time)}\\
Under assumption (A), there exists $\delta_0>0$ such that for all
$0<\delta\le \delta_0$, we have
\begin{equation}\label{eq::8}
\overline{v}^\varepsilon(0,x)\ge v_0^\varepsilon(x) \quad \mbox{for
  all}\quad x\in\R
\end{equation}
for $\varepsilon >0$ small enough.
\end{pro}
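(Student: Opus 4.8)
The plan is to compare the two expressions directly at $t=0$ and exploit the structure of the initial data \eqref{eq::900} together with the explicit choice $\overline{x}_i(0)=x_i^0-\delta$ and $\tilde\sigma(0,x)=(\delta+\sigma(0,x))/\alpha$. Writing
$$\overline{v}^\varepsilon(0,x)-v_0^\varepsilon(x)=\frac{\varepsilon\delta}{\alpha}+\sum_{i=1}^N\Big\{\phi\Big(\tfrac{x-x_i^0+\delta}{\varepsilon}\Big)-\phi\Big(\tfrac{x-x_i^0}{\varepsilon}\Big)\Big\}-\varepsilon\sum_{i=1}^N\overline{c}_i(0)\,\psi\Big(\tfrac{x-x_i^0+\delta}{\varepsilon}\Big),$$
the first term is a positive constant of order $\varepsilon\delta$, the monotonicity $\phi'>0$ (Theorem~\ref{th::5}) makes each bracketed difference nonnegative since we shifted the layers to the left, and the last term is bounded by $C\varepsilon$ because $|\overline{c}_i(0)|$ is controlled by the ODE right-hand side (finite, by Lemma~\ref{lem::1} and Assumption~(A)) and $\psi$ is bounded (Theorem~\ref{th::6}). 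So morally the inequality holds for $\varepsilon$ small. The delicate point is that the correction term is of the same order $\varepsilon$ as the good constant term $\varepsilon\delta/\alpha$, so one cannot simply absorb it by taking $\varepsilon\to0$; the argument must be quantitative in $\delta$.

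First I would record the estimates I need: the bound $|\overline{c}_i(0)|=|\dot{\overline x}_i(0)|\le M$ with $M=M(\delta_0,\sigma,\text{initial positions})$ uniform for $\delta\le\delta_0$, obtained from the ODE and the fact that the initial minimal gap $d(0)-C\delta_0$ stays positive; and $\|\psi\|_{L^\infty}\le C_\psi$. Then I would split $\R$ into two regimes. In the \emph{far regime}, where $|x-x_i^0|\ge R\varepsilon$ for all $i$ (with $R$ a large fixed constant), each difference $\phi(\frac{x-x_i^0+\delta}{\varepsilon})-\phi(\frac{x-x_i^0}{\varepsilon})$ is still $\ge0$, so I only need $\frac{\varepsilon\delta}{\alpha}\ge N C_\psi M\,\varepsilon$, i.e. $\delta\ge \alpha N C_\psi M$ — but $M$ itself grows with the Lipschitz constant, not with $\delta$, so this is a genuine smallness condition that should instead be read the other way. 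Here is the actual mechanism: the layer differences are not merely nonnegative, they are \emph{strictly positive of order $\varepsilon\cdot(\text{something})$ near a layer}, and this is what beats the $\psi$ term.

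So the real argument is: near the $i$-th layer, $\phi(\frac{x-x_i^0+\delta}{\varepsilon})-\phi(\frac{x-x_i^0}{\varepsilon})=\int_0^{\delta/\varepsilon}\phi'\big(\tfrac{x-x_i^0}{\varepsilon}+s\big)\,ds$. Choosing $\delta$ small but fixed and $\varepsilon$ much smaller, $\delta/\varepsilon$ is large, and by \eqref{eq::5} the integral of $\phi'$ over a long interval captures almost the full mass: for $|x-x_i^0|\le \sqrt{\varepsilon\delta}$, say, this difference is bounded below by $\phi(\sqrt{\delta/\varepsilon})-\phi(-\sqrt{\delta/\varepsilon})\to 1$, which dominates the $O(\varepsilon)$ correction trivially. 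Away from all layers (distance $\ge\sqrt{\varepsilon\delta}$ from every $x_i^0$), one uses instead \eqref{eq::6}: $\phi(\frac{y+\delta}{\varepsilon})-\phi(\frac{y}{\varepsilon})$ with $|y|\ge\sqrt{\varepsilon\delta}$ equals $H$-differences (zero, since no integer jump is crossed for $\varepsilon$ small relative to $\delta$) plus $\frac{1}{\alpha\pi}\big(\frac{\varepsilon}{y}-\frac{\varepsilon}{y+\delta}\big)+O(\varepsilon^2/y^2)$, and the leading term is $\frac{\varepsilon\delta}{\alpha\pi y(y+\delta)}$; combined with the uniform $+\frac{\varepsilon\delta}{\alpha}$ this stays positive once $\varepsilon$ is small enough that the $O(\varepsilon^2/y^2)=O(\varepsilon/\delta)$ remainder and the $NC_\psi M\varepsilon$ term are beaten — which happens precisely because the good term is $\varepsilon\delta/\alpha$ with $\delta$ \emph{fixed} while the bad terms carry an extra $\varepsilon$ (for the $\psi$ part) or are controlled by choosing $\varepsilon\ll\delta^2$ (for the $\phi$ remainder). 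I expect the main obstacle to be bookkeeping this two-scale competition cleanly — precisely tracking that "$\delta$ fixed first, then $\varepsilon\to0$" makes every error term subordinate to $\varepsilon\delta/\alpha$ — and in particular handling the transition zone at distance $\approx\sqrt{\varepsilon\delta}$ from a layer, where neither the "near" nor the "far" estimate is sharp and one must check the two bounds overlap.
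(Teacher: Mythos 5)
Your algebraic decomposition of $\overline{v}^\varepsilon(0,\cdot)-v_0^\varepsilon$ is correct, and you correctly flag the real difficulty: the corrector term is of order $\varepsilon$ with a constant that does not shrink as $\delta\to 0$, while the good constant term is only $\varepsilon\delta/\alpha$. But your proposed resolution fails exactly in the region far from \emph{all} layers. There each bracketed difference is, by \eqref{eq::6}, of size $\varepsilon\delta/\bigl(\alpha\pi\, y_i(y_i+\delta)\bigr)$, which tends to $0$ as the distance to the layers grows, so the total of your positive terms is essentially $\varepsilon\delta/\alpha$; against it you only have the bound $\varepsilon N M C_\psi$ for the $\psi$-sum, with $M=\max_i|\overline{c}_i(0)|$ and $C_\psi=\|\psi\|_{L^\infty}$ both independent of $\delta$. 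Your closing claim that the $\psi$-part ``carries an extra $\varepsilon$'' is false: it is of order $\varepsilon$, the same as $\varepsilon\delta/\alpha$, so positivity would force $\delta\gtrsim \alpha N M C_\psi$ --- a lower bound on $\delta$, incompatible with the statement ``for all $0<\delta\le\delta_0$''. (You noticed this obstruction yourself in your first try, but the near-layer gain cannot repair it at points far from every layer, where the layer differences are $O(\varepsilon\delta)$ at best.) The missing ingredient, which the paper's proof uses, is the \emph{decay} of the corrector: Theorem \ref{th::6} gives $\psi(\pm\infty)=0$, so one may choose $A=A(\delta)$ with $\bigl(\sum_i|\overline{c}_i(0)|\bigr)\sup_{\R\setminus[-A,A]}|\psi|\le\delta/\alpha$ (this is \eqref{eq::10}). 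Then, when $|x-\overline{x}_i(0)|\ge\varepsilon A$ for every $i$, all the $\psi_i$ are evaluated outside $[-A,A]$ and the whole corrector term is absorbed by $\varepsilon\delta/\alpha$, monotonicity of $\phi$ handling the rest; when instead $|x-\overline{x}_{i_0}(0)|<\varepsilon A$ for some $i_0$, the gain $\phi(-A)-\phi(A-\delta/\varepsilon)=\phi(-A)-O(\varepsilon/\delta)$ is a fixed positive constant which dominates every $O(\varepsilon)$ term, and there mere boundedness of $\psi$ suffices.

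Two further (reparable) inaccuracies in your two-scale bookkeeping: in the near regime the difference $\phi(y+\delta/\varepsilon)-\phi(y)$ is \emph{not} bounded below by $\phi(\sqrt{\delta/\varepsilon})-\phi(-\sqrt{\delta/\varepsilon})$ uniformly for $|y|\le\sqrt{\delta/\varepsilon}$ (take $y$ at the right edge: the correct uniform lower bound is only of order $\sqrt{\varepsilon/\delta}$, which still beats $O(\varepsilon)$ but is far from $1$); and in your far regime the expansion via \eqref{eq::6} is not valid near the shifted positions $x_i^0-\delta$, where $(y+\delta)/\varepsilon=O(1)$, while for $x$ between $x_i^0-\delta$ and $x_i^0$ the Heaviside difference equals $1$, not $0$ (harmless, but your stated reason is wrong). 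These can be fixed, but the $\psi$-accounting far from the layers cannot, without invoking $\psi(\pm\infty)=0$.
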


\noindent {\bf Proof of Proposition \ref{pro::2}}\\
First we choose $\delta>0$ small enough such that
$$x^0_i<\overline{x}_{i+1}(0)< x_{i+1}^0 \quad \mbox{for}\quad i=1,...,N-1.$$
Let $A>0$ be large enough such that
\begin{equation}\label{eq::10}
\frac{\delta}{\alpha}  \ge \left(\sum_{i=1}^N |\overline{c}_i(0)|\right)\
\sup_{\R\backslash [-A,A]} |\psi|.
\end{equation}
\noindent {\bf Case 1: $|x-\overline{x}_i(0)|\ge \varepsilon A$ for each
  $i=1,...,N$}\\
Using the fact that $\phi$ is increasing, we deduce that
$\displaystyle{\phi\left(\frac{x-\overline{x}_i(0)}{\varepsilon}\right) \ge
\phi\left(\frac{x-{x}_i^0}{\varepsilon}\right)}$
and then using (\ref{eq::10}), we get that
$$\overline{v}^\varepsilon(0,x)\ge v_0^\varepsilon(x).$$
\noindent {\bf Case 2: $|x-\overline{x}_{i_0}(0)| < \varepsilon A$ for some index
  $i_0\in \left\{1,...,N\right\}$}\\
Then
$$\phi\left(\frac{x-\overline{x}_{i_0}(0)}{\varepsilon}\right)\ge
\phi(-A)>0$$
while
$$\begin{array}{ll}
\displaystyle{\phi\left(\frac{x-{x}_{i_0}^0}{\varepsilon}\right)} & =
\displaystyle{\phi\left(\frac{x-\overline{x}_{i_0}(0)-\delta}{\varepsilon}\right)}\\
\\
& \displaystyle{\le \phi\left(A-\frac{\delta}{\varepsilon}\right) =
O\left(\frac{\varepsilon}{\delta}\right) \quad \mbox{as}\quad \varepsilon
\to 0.}
\end{array}$$
Therefore for $\varepsilon$ small enough, we have
$$\displaystyle{\phi\left(\frac{x-\overline{x}_{i_0}(0)}{\varepsilon}\right)
  \ge \phi\left(\frac{x-{x}_{i_0}^0}{\varepsilon}\right) +\sum_{i=1}^N
  \varepsilon \overline{c}_i(0) \psi\left(\frac{x-\overline{x}_{i}(0)}{\varepsilon}\right)}.$$
Using again the monotonicity of $\phi$, we conclude in case 2  that
$$\overline{v}^\varepsilon(0,x)\ge v_0^\varepsilon(x).$$
Finally case 1 and 2 prove that (\ref{eq::8}) holds for any $x\in\R$.
This ends the proof of the Proposition.
\qed\\

Let us define for $i=1,...,N$
$$\left\{\begin{array}{l}
\displaystyle{\psi_i :=\psi\left(\frac{x-\overline{x}_i(t)}{\varepsilon}\right)}\\
\\
\displaystyle{\tilde{\phi}_i :=\phi\left(\frac{x-\overline{x}_i(t)}{\varepsilon}\right) - H\left(\frac{x-\overline{x}_i(t)}{\varepsilon}\right)}
\end{array}\right.$$
where $H$ is the Heaviside function.

\begin{lem}\label{lem::2}{\bf (Computation using the ansatz)}\\
Let
$$\displaystyle{I^\varepsilon := \varepsilon \overline{v}^\varepsilon_t
+\frac{1}{\varepsilon}\left\{W'(\overline{v}^\varepsilon) -\varepsilon
  L\overline{v}^\varepsilon -\varepsilon \sigma\right\}}.$$
Then for any $i_0\in \left\{1,...,N\right\}$, we have
\begin{equation}\label{eq::12}
I^\varepsilon = e^\varepsilon_{i_0} + (\alpha \tilde{\sigma}-\sigma) +
O(\tilde{\phi}_{i_0}) \left\{\eta \overline{c}_{i_0} +\tilde{\sigma} + \sum_{i\not= i_0}
  \frac{\tilde{\phi}_i}{\varepsilon} \right\}
\end{equation}
where $\eta$ is defined in (\ref{eq::7'}) and the error
$e^\varepsilon_{i_0}$ is given by
$$e^\varepsilon_{i_0}= O(\varepsilon) + \sum_{i\not= i_0} O(\psi_i) +
\sum_{i\not= i_0} O(\tilde{\phi}_i) + \sum_{i\not= i_0}
  O\left(\frac{(\tilde{\phi}_i)^2}{\varepsilon}\right).$$
\end{lem}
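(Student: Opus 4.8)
The plan is to plug the ansatz
$$\overline{v}^\varepsilon(t,x)=\varepsilon\tilde{\sigma}(t,x)+\sum_{i=1}^N\left\{\phi\left(\tfrac{x-\overline{x}_i(t)}{\varepsilon}\right)-\varepsilon\overline{c}_i(t)\psi\left(\tfrac{x-\overline{x}_i(t)}{\varepsilon}\right)\right\}$$
directly into the expression for $I^\varepsilon$ and expand term by term, tracking everything up to the relevant order near a fixed layer $i_0$. First I would compute the three ingredients separately: the time derivative $\overline{v}^\varepsilon_t$ (which produces $-\dot{\overline{x}}_i\phi'(\cdot)/\varepsilon$ and $\dot{\overline{x}}_i\overline{c}_i\psi'(\cdot)/\varepsilon$ terms plus $\varepsilon\tilde{\sigma}_t$ and $\varepsilon\dot{\overline{c}}_i\psi$), the nonlocal operator $L\overline{v}^\varepsilon$ (using linearity of $L$ and the scaling $L[f(\cdot/\varepsilon)](x)=\tfrac1\varepsilon(Lf)(x/\varepsilon)$, together with $L\phi=W'(\phi)$ from \eqref{eq::14} and the corrector equation \eqref{eq::7} to replace $L\psi$ by $W''(\phi)\psi+\phi'+\eta(W''(\phi)-W''(0))$), and the potential term $W'(\overline{v}^\varepsilon)$ via a Taylor expansion around the single active layer.

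The key algebraic move is the Taylor expansion of $W'$. Near $\overline{x}_{i_0}$ the "main part" of the argument is $H(\cdot)+\phi((x-\overline{x}_{i_0})/\varepsilon)-H((x-\overline{x}_{i_0})/\varepsilon)$ up to the contributions of the other layers; one writes $\phi_i = H_i+\tilde\phi_i$ where $\tilde\phi_i$ is small by \eqref{eq::6} when $i\neq i_0$ (since $x$ is near $\overline{x}_{i_0}$, far from $\overline{x}_i$), and uses $W'$ periodic with $W'$ vanishing on $\Z$, $W''(0)=\alpha$. Expanding $W'\big(\varepsilon\tilde\sigma+\phi_{i_0}+\sum_{i\neq i_0}\tilde\phi_i-\varepsilon\sum_i\overline{c}_i\psi_i\big)$ to first order gives $W'(\phi_{i_0})+W''(\phi_{i_0})\big(\varepsilon\tilde\sigma+\sum_{i\neq i_0}\tilde\phi_i-\varepsilon\sum_i\overline{c}_i\psi_i\big)+(\text{quadratic remainders})$; the quadratic remainders in the $\tilde\phi_i$ ($i\neq i_0$) and in the $\psi_i$ are exactly what feeds the $O((\tilde\phi_i)^2/\varepsilon)$, $O(\psi_i)$ terms in $e^\varepsilon_{i_0}$. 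Dividing by $\varepsilon$, the $W'(\phi_{i_0})/\varepsilon$ cancels against $L\phi_{i_0}$, the $\varepsilon\tilde\sigma W''(\phi_{i_0})/\varepsilon$ combines with $-\sigma$ and the corrector's $\eta(W''(\phi_{i_0})-W''(0))$ term, and — using $\dot{\overline{x}}_{i_0}=\overline{c}_{i_0}$ so the $\phi'$ terms match — what survives is precisely $(\alpha\tilde\sigma-\sigma)$ plus the stated $O(\tilde\phi_{i_0})\{\eta\overline{c}_{i_0}+\tilde\sigma+\sum_{i\neq i_0}\tilde\phi_i/\varepsilon\}$ structure, after noting $W''(\phi_{i_0})-W''(0)=O(\tilde\phi_{i_0})$ because $W''$ is $C^\beta$ and $\phi_{i_0}-H(\cdot)=\tilde\phi_{i_0}$. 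The remaining genuinely small pieces — $\varepsilon\tilde\sigma_t$, $\varepsilon\dot{\overline{c}}_{i_0}\psi_{i_0}$, the cross terms from $L$ acting on the $i\neq i_0$ layers and correctors, and the higher-order Taylor remainders — get collected into $e^\varepsilon_{i_0}$, using the decay estimates \eqref{eq::5}, \eqref{eq::6} and the boundedness of $\psi,\psi'$ from Theorem \ref{th::6} to justify each $O(\cdot)$.

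The main obstacle I anticipate is bookkeeping discipline rather than any single deep estimate: one must be careful about which quantities are "$O(\varepsilon)$ uniformly", which are "$O(\tilde\phi_i)$" (hence small only away from layer $i$), and which are "$O(\tilde\phi_i/\varepsilon)$" (borderline, and only admissible inside the explicit brace multiplied by $O(\tilde\phi_{i_0})$, so that the product is genuinely controlled when the supports are disjoint). In particular the interaction term $\sum_{i\neq i_0}\tilde\phi_i/\varepsilon$ is not small by itself, and the whole point of \eqref{eq::6} — that $\tilde\phi_i\approx -1/(\alpha\pi(x-\overline{x}_i)/\varepsilon)=-\varepsilon/(\alpha\pi(x-\overline{x}_i))$ — is that this term, once multiplied by a factor that is $O(\tilde\phi_{i_0})$ near layer $i_0$, will later (in the choice of $\delta$ and the comparison argument) reproduce the $1/(x_i-x_j)$ interaction forces in the ODE \eqref{eq::2bis} with the correct sign. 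For the present lemma, though, it suffices to isolate that structure, so I would simply carry the $\sum_{i\neq i_0}\tilde\phi_i/\varepsilon$ term symbolically inside the brace and verify it appears multiplied by $O(\tilde\phi_{i_0})$ exactly as stated. A secondary technical point is justifying the splitting of the nonlocal term $L$ across a sum of translated-and-rescaled profiles and the Taylor expansion of $W'$ pointwise in $x$; both are legitimate here because $\phi,\psi\in C^{1,\beta}_{loc}\cap L^\infty$ and $W\in C^{2,\beta}$, so $L\phi$, $L\psi$ are defined pointwise and the remainder in $W'$ is $C^{1,\beta}$-controlled.
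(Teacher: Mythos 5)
Your proposal follows essentially the same route as the paper's proof: substitute the ansatz, use $L\phi=W'(\phi)$ and the corrector equation to replace $L\psi$, exploit the $1$-periodicity of $W$ to replace $\phi_i$ by $\tilde{\phi}_i$ in the arguments of $W'$, $W''$, Taylor-expand $W'$ around the active layer $\tilde{\phi}_{i_0}$, cancel the $\phi'$ terms via $\overline{c}_i=\dot{\overline{x}}_i$, and dump the remaining small terms into $e^\varepsilon_{i_0}$, which is exactly the paper's computation. Two small points to fix when writing it out: the $\psi'$ contribution of the time derivative is $\overline{c}_i\dot{\overline{x}}_i\psi'\left(\frac{x-\overline{x}_i}{\varepsilon}\right)$ (the chain-rule factor $1/\varepsilon$ cancels the prefactor $\varepsilon$, so after multiplication by $\varepsilon$ it is $O(\varepsilon)$, not the $O(1)$ term your bookkeeping would produce), and one should state explicitly that the $W''(\tilde{\phi}_{i_0})\psi_{i_0}$ term from the Taylor expansion cancels exactly against the one produced by the corrector equation, since $\psi_{i_0}$ is not small near the $i_0$-th layer and cannot be absorbed into $e^\varepsilon_{i_0}$.
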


\noindent {\bf Proof of Lemma \ref{lem::2}}\\
We have, using the equation for the corrector, that
$$\begin{array}{ll}
I^\varepsilon  =  & \displaystyle{\left(\sum_{i=1}^N
-\dot{\overline{x}}_i\phi'\left(\frac{x-\overline{x}_i(t)}{\varepsilon}\right)\right)
+  \left(\varepsilon\sum_{i=1}^N \left\{(\overline{c}_i)^2
  \psi'\left(\frac{x-\overline{x}_i(t)}{\varepsilon}\right) -
  \varepsilon\dot{\overline{c}}_i \psi_i\right\}\right)  + \varepsilon^2
\tilde{\sigma}_t}\\
\\
& + \displaystyle{\frac{1}{\varepsilon}}\left[\begin{array}{l}
-\varepsilon^2 L \tilde{\sigma} -\varepsilon \sigma\\
\\
\displaystyle{+ W'\left(\varepsilon \tilde{\sigma} + \sum_{i=1}^N \left\{\tilde{\phi}_i
    -\varepsilon \overline{c}_i \psi_i\right\}\right)}\\
\\
- \displaystyle{\sum_{i=1}^N \left\{W'(\tilde{\phi}_i) - \varepsilon \overline{c}_i
  \left[W''(\tilde{\phi}_i) \psi_i +
    \phi'\left(\frac{x-\overline{x}_i(t)}{\varepsilon}\right) + \eta
    \left(W''(\tilde{\phi}_i)-W''(0)\right)\right]\right\}}
\end{array}\right].
\end{array}$$
We first remark that all the terms in $\phi'$ vanish (because
$\overline{c}_i=\dot{x}_i$). Using assumption (A), we can bound
$\tilde{\sigma}_t, L\tilde{\sigma}$ and $\dot{\overline{c}}_i$. Moreover
using Theorem \ref{th::6}, we can also bound $\psi'$. Finally, collecting all the terms of order
$\varepsilon$ together, we get simply
$$I^\varepsilon  = O(\varepsilon) + \displaystyle{\frac{1}{\varepsilon}}\left[\begin{array}{l}
-\varepsilon \sigma\\
\\
\displaystyle{+ W'\left(\varepsilon \tilde{\sigma} + \sum_{i=1}^N \left\{\tilde{\phi}_i
    -\varepsilon \overline{c}_i \psi_i\right\}\right)}\\
\\
- \displaystyle{\sum_{i=1}^N \left\{W'(\tilde{\phi}_i) - \varepsilon \overline{c}_i
  \left[W''(\tilde{\phi}_i) \psi_i + \eta
    \left(W''(\tilde{\phi}_i)-W''(0)\right)\right]\right\}.}
\end{array}\right]$$
Now let us choose any index $i_0\in \left\{1,...,N\right\}$ and let us make
a Taylor expansion of $W'(\cdot)$ when the argument of this function is
close to $\tilde{\phi}_{i_0}$. We get
$$I^\varepsilon  = -\sigma +O(\varepsilon) + \displaystyle{\frac{1}{\varepsilon}}\left[\begin{array}{l}
\displaystyle{W'(\tilde{\phi}_{i_0}) + W''(\tilde{\phi}_{i_0})
  \left\{\varepsilon \tilde{\sigma} + \sum_{i\not= i_0} \tilde{\phi}_{i_0}
    + \left( -\varepsilon \overline{c}_{i_0} \psi_{i_0} +\sum_{i\not=i_0}
      -\varepsilon \overline{c}_i \psi_i\right)\right\}}\\
\\
\displaystyle{+ 0\left( \varepsilon^2 + \sum_{i\not= i_0} (\tilde{\phi}_i)^2\right)}\\
\\
-\displaystyle{\left\{W'(\tilde{\phi}_{i_0}) -
    \varepsilon\overline{c}_{i_0} \left[W''(\tilde{\phi}_{i_0}) \psi_{i_0}
      + \eta \left(W''(\tilde{\phi}_{i_0})-W''(0)\right)\right] \right\}}\\
\\
- \displaystyle{\sum_{i\not= i_0} \left\{W''(0)\tilde{\phi}_i +
    O((\tilde{\phi}_i)^2) - \varepsilon \overline{c}_i
  \left[W''(\tilde{\phi}_i) \psi_i + 0(\tilde{\phi}_i)\right]\right\}.}
\end{array}\right]$$
We remark that the terms in $W'(\tilde{\phi}_{i_0})$ vanish and the terms
in $\psi_{i_0}$ vanish. Now taking into account the definition of
$e^\varepsilon_{i_0}$, we see that we get
$$\displaystyle{I^\varepsilon  = -\sigma + e^\varepsilon_{i_0}
+(W''(\tilde{\phi}_{i_0})-W''(0))\left\{\tilde{\sigma} + \sum_{i\not= i_0}
  \frac{\tilde{\phi}_i}{\varepsilon}  + \eta \overline{c}_{i_0}\right\} +
W''(0) \tilde{\sigma}}$$
and then
$$\displaystyle{I^\varepsilon  =  e^\varepsilon_{i_0} +(\alpha
  \tilde{\sigma} -\sigma) + O(\tilde{\phi}_{i_0})\left\{\tilde{\sigma} + \sum_{i\not= i_0}
  \frac{\tilde{\phi}_i}{\varepsilon}  + \eta \overline{c}_{i_0}\right\}}.$$
This ends the proof of the lemma.
\qed\\

We are now ready to prove the following result:

\begin{pro}\label{pro::3}{\bf (Supersolution for positive time)}\\
Under assumption (A), there exists $\delta_0>0$ such that for all
$0<\delta\le \delta_0$ and given $T>0$, we have
\begin{equation}\label{eq::9}
\overline{v}^\varepsilon_t \ge \frac{1}{\varepsilon}
\left(L\overline{v}^\varepsilon - \frac{1}{\varepsilon}
W'(\overline{v}^\varepsilon) +\sigma(t,x) \right)   \quad \mbox{on}\quad
(0,T)\times \R
\end{equation}
for $\varepsilon >0$ small enough.
\end{pro}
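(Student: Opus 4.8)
The plan is to prove \eqref{eq::9} by showing that the pointwise quantity $I^\varepsilon$ of Lemma \ref{lem::2} satisfies $I^\varepsilon\ge 0$ on $(0,T)\times\R$ for $\varepsilon$ small: multiplying \eqref{eq::9} by $\varepsilon$ and rearranging shows that it is exactly $I^\varepsilon\ge 0$. Since $\overline{v}^\varepsilon$ is bounded and lies in $C^{1,\beta}_{loc}([0,T)\times\R)$ — because $\phi,\psi\in C^{1,\beta}_{loc}$ by Theorems \ref{th::5}--\ref{th::6}, $\sigma_x$ is H\"older by assumption (A), and $t\mapsto\overline{x}_i(t)$ is $C^2$ on $[0,T]$, the particles being strictly ordered there — classical solutions being viscosity solutions (see the remark following Definition \ref{defi::1}), this turns the pointwise inequality into the statement that $\overline{v}^\varepsilon$ is a supersolution. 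By \eqref{eq::12}, applied with an index $i_0=i_0(t,x)$ to be chosen, and using that $\tilde{\sigma}=(\delta+\sigma)/\alpha$ forces $\alpha\tilde{\sigma}-\sigma=\delta>0$, it is enough to show that $e^\varepsilon_{i_0}$ and the bracket term of \eqref{eq::12} are $o(1)$ as $\varepsilon\to0$, uniformly in $(t,x)$.

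First I would record the facts about the modified paths: exactly as in Lemma \ref{lem::1} and Corollary \ref{cor::1} (the extra constant $-\delta$ in the $\overline{x}_i$-ODE is harmless), there is $\rho_T>0$, uniform for $\delta\in(0,\delta_0]$, with $\min_{i\neq j}|\overline{x}_i(t)-\overline{x}_j(t)|\ge\rho_T$ on $[0,T]$, and $\overline{c}_i=\dot{\overline{x}}_i$ and $\ddot{\overline{x}}_i$ are bounded on $[0,T]$. For fixed $(t,x)$ I choose $i_0$ realizing $\min_i|x-\overline{x}_i(t)|$, so that $|x-\overline{x}_i(t)|\ge\rho_T/2$ for every $i\neq i_0$. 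For those $i$, estimate \eqref{eq::6} gives $|\tilde{\phi}_i|\le C\varepsilon/|x-\overline{x}_i(t)|=O(\varepsilon)$, whence $\sum_{i\neq i_0}O(\tilde{\phi}_i)=O(\varepsilon)$, $\sum_{i\neq i_0}O((\tilde{\phi}_i)^2/\varepsilon)=O(\varepsilon)$, and $\sum_{i\neq i_0}\tilde{\phi}_i/\varepsilon$ stays bounded; and $\psi_i=\psi((x-\overline{x}_i(t))/\varepsilon)\to0$ because $\psi\in L^\infty$ with $\psi(\pm\infty)=0$ and the argument tends to $\pm\infty$. Hence $e^\varepsilon_{i_0}=O(\varepsilon)+o(1)$.

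The heart of the matter is the term $O(\tilde{\phi}_{i_0})\big\{\eta\overline{c}_{i_0}+\tilde{\sigma}+\sum_{i\neq i_0}\tilde{\phi}_i/\varepsilon\big\}$, and here the limit ODE enters. I would insert the ODE defining $\overline{x}_{i_0}$ together with $\eta=1/(\gamma\alpha)$ (which follows from \eqref{eq::7'} and $\gamma=(\int_{\R}(\phi')^2)^{-1}$) to get $\eta\overline{c}_{i_0}=\alpha^{-1}\big(-\delta-\sigma(t,\overline{x}_{i_0}(t))+\tfrac1\pi\sum_{i\neq i_0}(\overline{x}_{i_0}-\overline{x}_i)^{-1}\big)$, and use \eqref{eq::6} to write $\tilde{\phi}_i/\varepsilon=-\tfrac1{\alpha\pi}(x-\overline{x}_i)^{-1}+O(\varepsilon)$ for $i\neq i_0$. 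The $\delta$'s cancel, the two discrete sums combine into $\tfrac1{\alpha\pi}\sum_{i\neq i_0}\frac{x-\overline{x}_{i_0}}{(\overline{x}_{i_0}-\overline{x}_i)(x-\overline{x}_i)}=O(|x-\overline{x}_{i_0}(t)|)$ (by the separation above), and what remains is
\[
\eta\overline{c}_{i_0}+\tilde{\sigma}+\sum_{i\neq i_0}\frac{\tilde{\phi}_i}{\varepsilon}
=\frac{\sigma(t,x)-\sigma(t,\overline{x}_{i_0}(t))}{\alpha}+O\big(|x-\overline{x}_{i_0}(t)|\big)+O(\varepsilon)
=O\big(|x-\overline{x}_{i_0}(t)|+\varepsilon\big),
\]
using $|\sigma_x|_{L^\infty}\le K$. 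On the other hand \eqref{eq::6}, together with boundedness of $\phi-H$ near the origin, gives $|\tilde{\phi}_{i_0}|\le C\min\big(1,\varepsilon/|x-\overline{x}_{i_0}(t)|\big)$. Multiplying, the product is $O(\varepsilon)$ at every scale: if $|x-\overline{x}_{i_0}(t)|\le\varepsilon$ the bracket is $O(\varepsilon)$ while $\tilde{\phi}_{i_0}=O(1)$; if $|x-\overline{x}_{i_0}(t)|\ge\varepsilon$ one gets $\tfrac{\varepsilon}{|x-\overline{x}_{i_0}|}\cdot O(|x-\overline{x}_{i_0}|+\varepsilon)=O(\varepsilon)$. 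Combining with the previous step, $I^\varepsilon=\delta+O(\varepsilon)+o(1)$ uniformly in $(t,x)$, so $I^\varepsilon\ge\delta/2>0$ for $\varepsilon$ small enough, which is \eqref{eq::9}; one takes for $\delta_0$ the threshold of Proposition \ref{pro::2}, decreased if needed so that the interlacing $x_i^0<\overline{x}_{i+1}(0)<x_{i+1}^0$ holds and $\rho_T$ is uniform in $\delta$.

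The hard part is exactly this cancellation: it is where the interaction law $\tfrac1\pi(x_i-x_j)^{-1}$ of \eqref{eq::2} is forced, the far-field $O(\varepsilon)$ tail of $\phi$ in \eqref{eq::6} having to reproduce, up to lower order, the elastic force felt by particle $i_0$; and one must handle all scales of $|x-\overline{x}_{i_0}(t)|$ simultaneously through the two complementary bounds on $\tilde{\phi}_{i_0}$ and on the bracket. A minor but necessary point is that the modified paths inherit the no-collision estimate of Lemma \ref{lem::1} with constants independent of $\delta\le\delta_0$, which keeps all error estimates uniform.
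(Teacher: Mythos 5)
Your proposal is correct and follows essentially the same route as the paper: both reduce \eqref{eq::9} to $I^\varepsilon\ge \delta/2$ via the expansion \eqref{eq::12} of Lemma \ref{lem::2}, cancel the $\delta$ and interaction terms by inserting the perturbed ODE together with $\alpha\gamma\eta=1$, and control the remainder with the asymptotics \eqref{eq::6} and the uniform particle separation inherited from Lemma \ref{lem::1}. The only (cosmetic) difference is bookkeeping: the paper splits cases at the scale $|x-\overline{x}_{i}(t)|\lessgtr\varepsilon^{1/3}$, whereas you fix the nearest index $i_0$ and use the bound $|\tilde{\phi}_{i_0}|\le C\min\left(1,\varepsilon/|x-\overline{x}_{i_0}|\right)$ to treat all scales at once, which even yields an $O(\varepsilon)$ rather than $O(\varepsilon^{1/3})$ error.
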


\noindent {\bf Proof of Proposition \ref{pro::3}}\\
\noindent {\bf Case 1: $|x-\overline{x}_{i_0}(t)|\le \varepsilon^{\frac13}$
for some index $i_0\in \left\{1,...,N\right\}$}\\
Hence for $\varepsilon$ small enough, we also have
$$|x-\overline{x}_{i}(t)|\ge \varepsilon^{\frac13} \quad \mbox{for}\quad
i \in \left\{1,...,N\right\}\backslash \left\{i_0\right\}.$$
Recall also that by (\ref{eq::6}), we have
$$\left|\phi(y)-H(y)+\frac{1}{\alpha \pi y}\right| \quad \le \quad \frac{C}{y^2}   \quad \mbox{for all}\quad |y|\ge 1.$$
We deduce that
$$\left| \sum_{i\not= i_0} \left\{\frac{\tilde{\phi}_i}{\varepsilon} +
    \frac{1}{\alpha \pi (x-\overline{x}_i)}\right\} \right| \le (N-1) C
    \varepsilon^{\frac13} = O(\varepsilon^{\frac13}).$$
Therefore using (\ref{eq::12})
$$\begin{array}{lcl}
I^\varepsilon & \ge & \displaystyle{e^\varepsilon_{i_0} + O(\varepsilon^{\frac13}) +
 \alpha \tilde{\sigma}-\sigma}\\
\\
& & \displaystyle{+ 0(\tilde{\phi}_{i_0})\left\{ \frac{1}{\alpha}(\delta + \sigma(t,x))
   -\sum_{i\not=i_0} \frac{1}{\alpha\pi (\overline{x}_{i_0}-\overline{x}_i)}
 + \frac{1}{\alpha \gamma} \overline{c}_{i_0}\right\}}\\
\\
& \ge &  \displaystyle{e^\varepsilon_{i_0} +  O(\varepsilon^{\frac13}) + \delta +
 0(\tilde{\phi}_{i_0})\left\{
   \sigma(t,x)-\sigma(t,\overline{x}_{i_0})\right\}}\\
\\
& \ge & \delta +  O(\varepsilon^{\frac13}) + e^\varepsilon_{i_0}\\
\\
& \ge & \delta/2 \quad \mbox{for}\quad \varepsilon \quad \mbox{small enough}
\end{array}$$
where in the first inequality we have used the fact that $\alpha \gamma
\eta =1$. To get the last line, we have used the fact that $e^\varepsilon_{i_0} \to 0$  as
$\varepsilon \to 0$.\\
\noindent {\bf Case 2: $|x-\overline{x}_{i}(t)|\ge \varepsilon^{\frac13}$
for each $i=1,...,N$}\\
Using in particular that
$$\tilde{\phi}_i = O(\varepsilon^{\frac23})$$
we conclude again from (\ref{eq::12}) that
$$I^\varepsilon \ge \delta/2 \quad \mbox{for}\quad \varepsilon \quad
\mbox{small enough}.$$
Finally case 1 and 2 prove that for $\varepsilon$ small enough we have
$$I^\varepsilon \ge \delta/2 \quad \mbox{for every}\quad x\in\R$$
which ends the proof of the Proposition.\\

\noindent {\bf Proof of Theorem \ref{th::1}}\\
\noindent {\bf Step 1: existence and uniqueness of the solution}\\
First remark that for $\varepsilon$ small enough the initial condition
satisfies
$$-1\le v^\varepsilon_0 \le N+1$$
and the  functions
$$\underline{u}(t,x)=-1-K^\varepsilon t \quad \mbox{and}\quad
\overline{u}(t,x)=N+1+K^\varepsilon t$$
with
$$K^\varepsilon=\varepsilon^{-1}|\sigma|_{L^\infty([0,+\infty)\times \R)}
+\varepsilon^{-2} |W'|_{L^\infty(\R)}$$
are respectively sub and supersolutions of (\ref{eq::1}) on
$[0,+\infty)\times \R$.
From the Perron's method, it follows the existence of a solution
$v^\varepsilon$ of (\ref{eq::1}) on $[0,+\infty)\times \R$ which is moreover unique thanks to the comparison
principle (Proposition \ref{pro:4}).\\
\noindent {\bf Step 2: sub and supersolutions as $\varepsilon \to 0$}\\
Now, given any fixed $T>0$, and thanks to Propositions \ref{pro::2} and
\ref{pro::3}, we can find some $\delta>0$ small enough such that
$\overline{v}^\varepsilon$ is a supersolution of (\ref{eq::1}) on $[0,T)\times
\R$.\\
Similarly we can build a subsolution
$\underline{v}^\varepsilon$ (defined simply as $\overline{v}^\varepsilon$
but with $\delta <0$). At the initial time we have
$$\overline{v}^\varepsilon(0,x) \ge v^\varepsilon(0,x) \ge
\underline{v}^\varepsilon(0,x) \quad \mbox{for all}\quad x\in\R$$
and then from the comparison principle, we get that
$$\overline{v}^\varepsilon(t,x) \ge v^\varepsilon(t,x) \ge
\underline{v}^\varepsilon(t,x) \quad \mbox{for all}\quad t\in [0,T),\quad
x\in\R.$$
Now for fixed $T>0$, we get the result (namely (\ref{eq::c+}) and
(\ref{eq::c-}))
on $[0,T)\times \R$, passing to the limit as
$\varepsilon\to 0$. Finally, because $T>0$  was arbitrary, we recover the
result for all time on $[0,+\infty)\times \R$. This ends the proof of the Theorem.\qed\\


\section{Known and further results on the half-Laplacian}\label{section-half-Laplacian}

In this section, we recall several results that will be used in Section 6 for the proof of Theorems \ref{th::5} and \ref{th::6}. We first recall four definitions (that are equivalent for smooth enough functions) of the half-Laplacian operator
(Fourier, L\'evy-Khintchine, classical harmonic extension, notion of weak solutions).

We will essentially use the notion of weak solution (via the harmonic extension) for which we will show that it coincides with the notion of viscosity solutions (via the L\'evy-Khintchine formula), when the function is in the space $L^\infty(\R)\cap C^{1,\beta}_{loc}(\R)\cap H^{\frac12}(\R)$.

Then we recall results from \cite{Cabre-SolaMorales:layer-solutions}
on the layer solutions and the comparison principle (for the harmonic extension). We also recall regularity results for harmonic extensions and give new useful results for the $L^\infty$ regularity of solutions to half-Laplacian type equations.\\

There are many points of view to handle half-Laplacian operators. Here we have chosen to consider it through a harmonic extension problem, although another more direct approach could have been taken. However, our approach has two advantages: first, many of the results reduce to its analogous for harmonic functions, and second, we use the setting already established by Cabr\'e and Sol\`a Morales in \cite{Cabre-SolaMorales:layer-solutions}. Another advantage is that this is precisely the setting of Cabr\'e-Sire \cite{Cabre-Sire:layer-solutions} for generalization to any power $\alpha\in(0,1)$.

\subsection{The Direct approaches}

\subsubsection{The Fourier approach}

We recall our definition of the Fourier transform for $w \in {\mathcal
  S}(\R)$  (where ${\mathcal S}(\R)$ is the Schwartz space):
$$\widehat{w}(\xi)=\int_{\R}w(x) \  e^{-i\xi x}dx  \quad \mbox{for all}\quad
\xi\in\R$$
and recall the definition of the following Hilbert space
$$H^{\frac12}(\R)=\left\{w\in L^2(\R),\quad \int_{\R}d\xi\
  (1+|\xi|)|\widehat{w}(\xi)|^2 < +\infty \right\}$$
with the scalar product
$$(v,w)_{H^{\frac12}(\R)}=\int_{\R}d\xi\ (1+|\xi|)\widehat{v}^*(\xi)
\widehat{w}(\xi)$$
where $\widehat{v}^*$ denotes here the complex conjugate of $\widehat{v}$.
We also set the following Hilbert space
$$H^{-\frac12}(\R)=\left\{w\in {\mathcal
  S}'(\R),\quad \widehat{w}\in L^2_{loc}(\R),\quad \int_{\R}d\xi\
  (1+|\xi|)^{-1}|\widehat{w}(\xi)|^2 < +\infty \right\}$$
with the scalar product
$$(v,w)_{H^{-\frac12}(\R)}=\int_{\R}d\xi\ (1+|\xi|)^{-1}\widehat{v}^*(\xi)
\widehat{w}(\xi).$$
It is known that $H^{-\frac12}(\R)$ is the dual of $H^{\frac12}(\R)$.
Recall that we set $L=-(-\Delta)^{\frac12}$ to be the half-Laplacian operator
defined in Fourier space by
\begin{equation}\label{eq::3}
\widehat{Lw}(\xi):=-|\xi|\widehat{w}(\xi) \quad \mbox{for}\quad \xi\in\R.
\end{equation}
In particular, $L$ is a continuous linear mapping from $H^{\frac12}(\R)$
into $H^{-\frac12}(\R)$ satisfying
$$\norm{Lw}_{H^{-\frac12}(\R)}\le \norm{w}_{H^{\frac12}(\R)}.$$
Moreover $L$ is self-adjoint as follows: for any $v,w \in H^{\frac12}(\R)$,
we have
$$<Lv,w>_{H^{-\frac12}(\R)\times H^{\frac12}(\R)}=
<Lw,v>_{H^{-\frac12}(\R)\times H^{\frac12}(\R)}.$$
Finally let us remark that for any $w\in H^{\frac12}(\R)$ we have
\be\label{H12-norm}<-Lw,w>_{H^{-\frac12}(\R)\times H^{\frac12}(\R)} +\int_{\R} w^2 = \frac{1}{2\pi}(w,w)_{H^{\frac12}(\R)}.\ee

\subsubsection{The approach in real space}

We have the following result (see \cite{Levy}, or the more recent paper \cite{Droniou-Imbert:fractal-first-order}, Theorem 1):
\begin{theo}\label{th::2}{\bf (The L\'evy-Khintchine formula)}\\
For any $0<\beta<1$, if $w\in H^{\frac12}(\R)\cap C^{1,\beta}_{loc}(\R)$, then we
have
$$Lw(x)=\frac{1}{\pi} \int_{\R}\frac{dz}{z^2}\left(w(x+z)-w(x)-zw'(x) 1_{\left\{|z|\le
      1\right\}}\right).$$
Moreover this formula allows also to define $Lw$ for instance for $w\in L^\infty(\R)\cap C^{1,\beta}_{loc}(\R)$.
\end{theo}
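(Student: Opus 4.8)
The plan is to verify that the singular integral on the right-hand side is well-defined for $w \in H^{\frac12}(\R) \cap C^{1,\beta}_{loc}(\R)$, and then to identify it with $Lw = -(-\Delta)^{\frac12}w$ by computing in Fourier variables. First I would check the integrability: near $z = 0$, a second-order Taylor expansion with the $C^{1,\beta}_{loc}$ bound gives $|w(x+z) - w(x) - z w'(x)| \le C |z|^{1+\beta}$, so $\frac{1}{z^2}$ times this is integrable on $\{|z|\le 1\}$; away from the origin, on $\{|z| > 1\}$ the term $z w'(x) 1_{\{|z|\le 1\}}$ drops out and $\frac{1}{z^2}(w(x+z) - w(x))$ is integrable because $w \in L^2(\R) \subset$ (locally $L^1$) and $\int_{|z|>1} z^{-2}\, dz < \infty$ controls the $w(x)$ part, while the $w(x+z)$ part is handled by $\int_{|z|>1} z^{-2} |w(x+z)|\, dz \le \big(\int z^{-4}\big)^{1/2}\|w\|_{L^2}$ by Cauchy–Schwarz. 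So the formula defines a pointwise function of $x$.

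The main step is the Fourier identification. I would first treat $w \in \mathcal S(\R)$, where everything is classical: writing the right-hand side as $\mathcal L w(x)$, one computes $\widehat{\mathcal L w}(\xi)$ by interchanging the ($z$-)integral with the Fourier transform (justified by Fubini using the integrability just established, uniformly in $x$ for Schwartz $w$), obtaining
\begin{equation*}
\widehat{\mathcal L w}(\xi) = \left(\frac{1}{\pi}\int_{\R}\frac{dz}{z^2}\left(e^{i\xi z} - 1 - i\xi z\, 1_{\{|z|\le 1\}}\right)\right)\widehat{w}(\xi),
\end{equation*}
and the bracketed symbol evaluates to $-|\xi|$ — this is the standard Lévy–Khintchine computation for the $\frac12$-stable process (split into $|z|\le 1$ and $|z|>1$, use that the odd part of the integrand integrates to zero by symmetry, and compute $\frac{1}{\pi}\int_{\R} \frac{1 - \cos(\xi z)}{z^2}\, dz = |\xi|$). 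Comparing with \eqref{eq::3} gives $\mathcal L w = Lw$ on $\mathcal S(\R)$. To pass to general $w \in H^{\frac12}(\R) \cap C^{1,\beta}_{loc}(\R)$, I would either cite \cite{Droniou-Imbert:fractal-first-order} directly (as the statement already offers), or argue by a mollification/truncation approximation: approximate $w$ by $w_n \in \mathcal S(\R)$ converging to $w$ in $H^{\frac12}$ and in $C^{1,\beta}_{loc}$ on a fixed neighborhood of $x$; the left-hand sides converge in $H^{-\frac12}$ hence (along a subsequence) pointwise a.e., and the right-hand sides converge pointwise at $x$ by dominated convergence using the uniform local $C^{1,\beta}$ and $L^2$ bounds.

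The last sentence — that the same formula defines $Lw$ for $w \in L^\infty(\R) \cap C^{1,\beta}_{loc}(\R)$ — requires only re-examining integrability: the $\{|z|\le 1\}$ piece is unchanged (it uses only the local $C^{1,\beta}$ bound), and the $\{|z|>1\}$ piece now uses $|w(x+z) - w(x)| \le 2\|w\|_{L^\infty}$ together with $\int_{|z|>1} z^{-2}\, dz = 2 < \infty$. So the integral converges absolutely and defines $Lw(x)$ pointwise, consistently with the Fourier definition on the overlap $L^\infty \cap C^{1,\beta}_{loc} \cap H^{\frac12}$.

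I expect the only genuine obstacle to be the approximation argument for passing from $\mathcal S(\R)$ to $H^{\frac12} \cap C^{1,\beta}_{loc}$, namely arranging simultaneous convergence in the global $H^{\frac12}$ norm and in the local $C^{1,\beta}$ norm so that both sides of the identity pass to the limit; since this is exactly the content of \cite{Droniou-Imbert:fractal-first-order}, Theorem 1, the cleanest route is simply to invoke that reference, and I would do so.
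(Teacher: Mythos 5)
The paper does not actually prove Theorem \ref{th::2}: it is quoted as a known result, with references to L\'evy \cite{Levy} and to Theorem 1 of \cite{Droniou-Imbert:fractal-first-order}. Your final decision --- to invoke \cite{Droniou-Imbert:fractal-first-order} --- is therefore exactly the paper's route, and on that basis your proposal is consistent with the paper's treatment. Your self-contained sketch is the standard argument and is essentially right: the absolute convergence of the integral from the local $C^{1,\beta}$ bound near $z=0$ and from the $L^2$ (resp. $L^\infty$) control for $|z|>1$, and the symbol computation $\frac{1}{\pi}\int_{\R}\frac{dz}{z^2}\left(e^{i\xi z}-1-i\xi z\,1_{\{|z|\le 1\}}\right)=-|\xi|$ on $\mathcal S(\R)$, are all correct with the paper's Fourier convention.

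The one step that does not work as written is the passage to the limit on the left-hand side: convergence $Lw_n\to Lw$ in $H^{-\frac12}(\R)$ does \emph{not} yield pointwise a.e.\ convergence of a subsequence, since elements of $H^{-\frac12}(\R)$ need not be functions and $H^{-\frac12}$ convergence is not an $L^p$ convergence. The clean fix is to pass to the limit in the duality pairing: for every $\varphi\in C^\infty_c(\R)$ one has $\langle Lw_n,\varphi\rangle\to\langle Lw,\varphi\rangle$, while the right-hand sides converge locally uniformly by dominated convergence (using the uniform local $C^{1,\beta}$ bounds of the mollified sequence and the global $H^{\frac12}$, resp. $L^\infty$, control of the tails); hence $Lw$ coincides as a distribution with the right-hand side, which is a continuous function of $x$, and the pointwise identity follows. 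With that correction --- or simply with the citation, as in the paper --- your argument is fine, including the closing observation that only $L^\infty(\R)\cap C^{1,\beta}_{loc}(\R)$ is needed for the formula itself to make sense.
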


\begin{rem}
In the previous formula the characteristic function $1_{\left\{|z|\le
      1\right\}}$ can be replaced by $1_{\left\{|z|\le
      r\right\}}$ for any $r>0$, which does not change the value of the integral.
\end{rem}

The operator $L$ is called a L\'evy operator associated to the L\'evy
measure $dz/z^2$.
In particular, the L\'evy-Khintchine formula allows to see that for any
$w\in H^{\frac12}(\R)$, we have
\be\label{eq::200}\frac{1}{2\pi}\int_{\R}d\xi\ |\xi||\widehat{w}(\xi)|^2=<-Lw,w>_{H^{-\frac12}(\R)\times
  H^{\frac12}(\R)} = {\frac{1}{2\pi}}
\int_{\R\times \R} dx dx' \ \frac{|w(x')-w(x)|^2}{|x'-x|^2}.\ee
This last formula is a simple exercise and can be found in the book by Lieb-Loss \cite{Lieb-Loss}, theorem 7.12, part ii.

\subsection{The approaches by harmonic extension on $\R^2_+$ and notion of weak solutions}

In the following, we give a characterization of the half-Laplacian operator in $\mathbb R$ through a harmonic extension to the upper half-plane as a Dirichlet-to-Neumann operator. This relation was well known but it has been recently rediscovered by Caffarelli-Silvestre in \cite{Caffarelli-Silvestre}, where they also give the generalization to any fractional power of the Laplacian between zero and one.

The main advantage of this interpretation is that the regularity results and maximum principles for the half-Laplacian can be shown through the analogous results for harmonic functions in the extension, as it has been considered by Cabr\'e and Sol\`a Morales in \cite{Cabre-SolaMorales:layer-solutions}.

\subsubsection{Classical harmonic extension for smooth functions with compact support}

Assume that $w\in C^\infty_{c}(\R)$ (the space of smooth functions with compact support) and consider its harmonic extension $u(x,y)$
on
$$\Omega:=\left\{(x,y)\, : \quad x\in\R,\quad y >0\right\}=\R^2_+$$
defined as the solution of
\begin{equation}\label{eq::4}
\left\{\begin{array}{ll}
\Delta u = 0  &\quad \mbox{on}\quad \Omega,\\
\\
u(x,0)=w(x)  &\quad \mbox{for all}\quad x\in\partial\Omega.
\end{array}\right.
\end{equation}
Then it is known that
\begin{equation}\label{eq::55}
Lw(x)=-\frac{\partial u}{\partial \nu}(x,0) \quad \mbox{for all}\quad
x\in\partial\Omega.
\end{equation}
Here, and for the rest of the paper, $\nu$ denotes the exterior normal along $\partial\Omega$.

The half-Laplacian appears to be a Dirichlet-to-Neumann operator.
This is also called a Steklov-Poincar\'e operator. Moreover, we can use the Poisson kernel $P$ for the upper half-plane to write $u=P*_x w$, i.e.,
\be\label{Poisson-kernel}u(x,y)=c_2\int_{\mathbb R} \frac{y}{\abs{x-\xi}^2+y^2}w(\xi)d\xi,\ee
where $c_2$ is a dimensional constant.

\subsubsection{Harmonic extension of functions in $H^{\frac12}(\R)$}

Now let us define the scalar product
$$(u,v)_H=\int_{\Omega} \nabla u\cdot\nabla v +\int_{\partial \Omega} uv$$
and consider the Hilbert space
$$H=\overline{C^\infty_{c}(\overline{\Omega})}^{|\cdot |_{H}}.$$

Then we have the following result:
\begin{theo}\label{th::3}{\bf (The trace mapping)}\\
The trace mapping
$$\begin{array}{llcl}
T: & H & \longrightarrow & H^{\frac12}(\R)\\
 & u & \longmapsto & Tu :=u(\cdot,0)
\end{array}$$
is linear continuous and satisfies
\be\label{trace-mapping}\norm{Tu}_{H^{\frac12}(\R)}^2 \le 2\pi\norm{u}_{H}^2.\ee
Moreover we have $\norm{Tu}^2_{H^{\frac12}(\R)} = 2\pi\norm{u}^2_{H}$ if and only if $u$ is
harmonic on $\Omega$.
\end{theo}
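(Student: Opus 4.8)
The plan is to build the trace map explicitly via the Poisson kernel and then identify when the inequality \eqref{trace-mapping} becomes an equality using the Dirichlet-principle characterization of harmonic functions. First I would work on the dense subspace $C^\infty_c(\overline{\Omega})$ and, for $u$ in this space with trace $w = u(\cdot,0)$, compare $u$ with its harmonic extension $\tilde u = P *_x w$ given by \eqref{Poisson-kernel}. The key computation is that for the harmonic extension $\tilde u$ one has the exact identity
\be
\int_{\Omega} |\nabla \tilde u|^2 = \frac{1}{2\pi}\int_{\R} |\xi|\,|\widehat w(\xi)|^2\, d\xi,
\ee
which follows by taking the Fourier transform in $x$: the harmonic extension of $w$ is $\widehat{\tilde u}(\xi,y) = \widehat w(\xi)\,e^{-|\xi| y}$, so $\int_0^\infty \int_\R (|\partial_x \tilde u|^2 + |\partial_y \tilde u|^2)\,dx\,dy = \frac{1}{2\pi}\int_\R (|\xi|^2 + |\xi|^2)\,|\widehat w(\xi)|^2 \int_0^\infty e^{-2|\xi|y}\,dy\, d\xi$, and the $y$-integral gives $1/(2|\xi|)$. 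Combining this with the trivial bound $\int_{\partial\Omega} |\tilde u|^2 = \|w\|_{L^2}^2$ and recalling the norm identity \eqref{H12-norm}, namely $\frac{1}{2\pi}\|w\|_{H^{1/2}}^2 = \frac{1}{2\pi}\int_\R |\xi|\,|\widehat w|^2 + \|w\|_{L^2}^2 = \int_\Omega |\nabla \tilde u|^2 + \int_{\partial\Omega} |\tilde u|^2 = \|\tilde u\|_H^2$, gives the equality $\|T\tilde u\|_{H^{1/2}}^2 = 2\pi \|\tilde u\|_H^2$ for harmonic $u$.

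Next I would prove the inequality for general $u \in H$ by the Dirichlet principle: among all functions in $H$ with a given trace $w$, the harmonic extension $\tilde u$ minimizes $\int_\Omega |\nabla \cdot|^2$ (indeed, writing $u = \tilde u + (u - \tilde u)$, the cross term $\int_\Omega \nabla \tilde u \cdot \nabla(u - \tilde u)$ vanishes by integration by parts since $u - \tilde u$ has zero trace and $\tilde u$ is harmonic, so $\int_\Omega |\nabla u|^2 = \int_\Omega |\nabla \tilde u|^2 + \int_\Omega |\nabla(u-\tilde u)|^2 \ge \int_\Omega |\nabla \tilde u|^2$). Since the boundary term $\int_{\partial\Omega} |u|^2 = \|w\|_{L^2}^2$ is the same for $u$ and $\tilde u$, we get $\|u\|_H^2 \ge \|\tilde u\|_H^2 = \frac{1}{2\pi}\|w\|_{H^{1/2}}^2$, which is exactly \eqref{trace-mapping}, with equality iff $\int_\Omega |\nabla(u - \tilde u)|^2 = 0$, i.e. (since $u - \tilde u$ vanishes on the boundary and $\Omega$ is connected) iff $u = \tilde u$ is harmonic. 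In particular $T$ is well-defined and bounded on the dense subspace, hence extends to all of $H$; linearity is clear, and the equality case passes to the limit since harmonicity is preserved under $H$-convergence (convergence of gradients in $L^2$ plus the mean-value property, or simply because the harmonic extension operator $w \mapsto \tilde u$ is continuous and $u \mapsto u - \widetilde{Tu}$ is the continuous projection onto the kernel).

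The main obstacle is the density and approximation bookkeeping: one must check that the trace, a priori defined only on $C^\infty_c(\overline\Omega)$, extends continuously to $H$ and that its image lands in $H^{1/2}(\R)$ rather than merely in $L^2$ — this is handled by the uniform bound \eqref{trace-mapping} on the dense subspace, but one should be slightly careful that $H$ is defined as the closure of $C^\infty_c(\overline\Omega)$ (functions smooth up to the boundary with compact support), so traces of approximants are automatically in $C^\infty_c(\R) \subset H^{1/2}(\R)$ and Cauchy in $H^{1/2}$, giving a well-defined limit. A second, more technical point is justifying the integration-by-parts in the Dirichlet principle for functions that are only in $H$ and not smooth; this is again resolved by approximation, using that the bilinear form $(u,v) \mapsto \int_\Omega \nabla u \cdot \nabla v$ is continuous on $H$ and that harmonic functions in $H$ with zero trace are zero (a Liouville-type / Dirichlet-energy argument on the half-plane). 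Everything else is the explicit Fourier computation of the Dirichlet energy of the Poisson extension, which is routine given \eqref{Poisson-kernel} and \eqref{H12-norm}.
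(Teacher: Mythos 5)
Your proposal is correct, but it follows a genuinely different route from the paper. The paper proves \eqref{trace-mapping} by quoting the classical trace embedding inequality \eqref{trace-inequality} (Corollary 6.3 of Alberti--Bouchitt\'e--Seppecher), combining it with the Fourier identities \eqref{H12-norm} and \eqref{eq::200}, extending by density, and then simply deferring the equality case to Theorem \ref{th::4}. You instead make the result self-contained: you compute the Dirichlet energy of the Poisson extension exactly in Fourier variables, $\int_\Omega |\nabla \tilde u|^2=\frac{1}{2\pi}\int_\R |\xi|\,|\widehat w|^2$, and then obtain the inequality for a general $u\in H$ from the Dirichlet principle (orthogonal decomposition $u=\tilde u+(u-\tilde u)$), which yields the equality characterization at the same time. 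What your approach buys is that it proves the trace inequality rather than citing it, and it simultaneously establishes most of the content of Theorem \ref{th::4} (that $P*_x w\in H$, the identity \eqref{equality-norms}, and essentially \eqref{weak-notion}); what the paper's approach buys is brevity, at the cost of a forward reference for the equality case. Two points in your write-up deserve the care you already flag: the vanishing of the cross term $\int_\Omega \nabla\tilde u\cdot\nabla(u-\tilde u)$ for $u$ merely in $H$ is exactly the weak formulation \eqref{weak-notion}, so it must be justified by approximation (pairing $\partial\tilde u/\partial\nu\in H^{-\frac12}$ with the trace of the approximants) rather than by a naive integration by parts; and the converse implication (harmonic $\Rightarrow$ equality) requires uniqueness of the harmonic element of $H$ with a given trace, i.e.\ your Liouville-type statement that a harmonic function in $H$ with zero trace vanishes, which should be argued (e.g.\ by odd reflection and finiteness of the Dirichlet energy) rather than asserted. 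With these two details spelled out, your argument is complete and arguably more informative than the paper's.
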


\begin{proof}
For $u$ smooth and with compact support, the classical embedding trace inequality reads
\be\label{trace-inequality}
\norm{Tu}^2_{\dot{H}^{\half}(\R)}:=\int_{\mathbb R\times \mathbb R}dxdx' \ \frac{|Tu(x')-Tu(x)|^2}{|x'-x|^2}\leq 2\pi \int_\Omega \abs{\nabla u}^2\,dxdy.\ee
A proof of the previous inequality can be found in corollary 6.3 in \cite{Alberti-Bouchitte-Seppecher:Phase-transition}.
Then \eqref{trace-mapping} follows by putting together the previous line with \eqref{H12-norm} and \eqref{eq::200}. For a general $u$ the result follows by density, while the equality case is dealt with in Theorem \ref{th::4} below.\\
\end{proof}


Now, we can reformulate problem \eqref{eq::4}-\eqref{eq::55} in a more
general setting:
\begin{theo}\label{th::4}{\bf (Half-Laplacian defined by the harmonic extension)}\\
For any $w\in H^{\frac12}(\R)$, there exists a unique $u\in H$ which is
harmonic on $\Omega$, such that $Tu=w$, and it is written as $u=P*_x w$. Moreover for any $v\in H$, we have
\be\label{weak-notion}<-Lw,Tv>_{H^{-\frac12}(\R)\times H^{\frac12}(\R)} = \int_{\Omega} \nabla
u\cdot \nabla v.\ee
In particular,
$$\frac{1}{2\pi}\int_{\R} d\xi\ |\xi| |\widehat{w}(\xi)|^2 = <-Lw,w>_{H^{-\frac12}(\R)\times H^{\frac12}(\R)} =\int_{\Omega} |\nabla u|^2,$$
and
\be\label{equality-norms}\int_\Omega\abs{\nabla u }^2+\int_{\partial\Omega} w^2=\frac{1}{2\pi}\norm {w}_{H^{\frac{1}{2}}(\mathbb R)}^2.\ee
\end{theo}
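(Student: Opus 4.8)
The plan is to establish the four assertions in turn --- the energy identity for the Poisson extension, the fact that $P\ast_x w\in H$ for every $w\in H^{\frac12}(\R)$, the weak identity \eqref{weak-notion}, and uniqueness --- relying on Theorem \ref{th::3}, on \eqref{eq::200}, and on a one-line Fourier computation. First I would record that if $w\in C^\infty_c(\R)$ and $u=P\ast_x w$ then $\widehat u(\xi,y)=e^{-|\xi|y}\widehat w(\xi)$ in the $x$-variable, so by Plancherel $\int_\Omega|\nabla u|^2=\frac1{2\pi}\int_0^{+\infty}\!\!\int_\R 2|\xi|^2 e^{-2|\xi|y}|\widehat w(\xi)|^2\,d\xi\,dy=\frac1{2\pi}\int_\R|\xi||\widehat w(\xi)|^2\,d\xi$, which by \eqref{eq::200} equals $\langle -Lw,w\rangle$; adding $\int_{\partial\Omega}w^2=\norm{w}_{L^2}^2$ gives $\norm u_H^2=\frac1{2\pi}\norm w_{H^{\frac12}(\R)}^2$, in particular $u\in H$. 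For general $w\in H^{\frac12}(\R)$ I would note that $u:=P\ast_x w$ is harmonic on $\Omega$ by the classical theory of the Poisson integral, and pass to the limit: taking $w_n\in C^\infty_c(\R)$ with $w_n\to w$ in $H^{\frac12}(\R)$ and $u_n=P\ast_x w_n$, the identity just proved applied to $u_n-u_m=P\ast_x(w_n-w_m)$ gives $\norm{u_n-u_m}_H^2=\frac1{2\pi}\norm{w_n-w_m}_{H^{\frac12}(\R)}^2\to0$, so $u_n\to u$ in $H$, hence $u\in H$; moreover $Tu=\lim Tu_n=w$ by continuity of $T$, and passing to the limit in the energy identity yields \eqref{equality-norms} together with the middle displayed equality of the theorem.

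For \eqref{weak-notion} I would first take $w\in C^\infty_c(\R)$ and $v\in C^\infty_c(\overline\Omega)$: then $u=P\ast_x w$ is the classical harmonic extension of \eqref{eq::4}, $\Delta u=0$, and Green's formula together with \eqref{eq::55} gives $\int_\Omega\nabla u\cdot\nabla v=\int_{\partial\Omega}\bigl(-\partial_y u(x,0)\bigr)(Tv)\,dx=\int_{\partial\Omega}(-Lw)(Tv)\,dx=\langle -Lw,Tv\rangle$. Both sides are continuous in $v\in H$ (the left by Cauchy--Schwarz, the right because $T$ maps $H$ continuously into $H^{\frac12}(\R)$), so by density of $C^\infty_c(\overline\Omega)$ in $H$ the identity extends to all $v\in H$; then replacing $w$ by $w_n\to w$ in $H^{\frac12}(\R)$ and using $u_n\to u$ in $H$ and $-Lw_n\to -Lw$ in $H^{-\frac12}(\R)$ gives \eqref{weak-notion} for all $w\in H^{\frac12}(\R)$ and all $v\in H$.

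For uniqueness, if $u_1,u_2\in H$ are harmonic on $\Omega$ with $Tu_1=Tu_2=w$, set $z:=u_1-u_2\in H$, which is harmonic on $\Omega$ with $Tz=0$; it suffices to show $z\equiv0$. Since $z\in H$ has vanishing trace, it extends by $0$ across $\partial\Omega$ to an $H^1_{loc}(\R^2)$ function, and its odd reflection $\tilde z$ across $\{y=0\}$ is weakly harmonic on all of $\R^2$ (testing against a test function and splitting into even and odd parts reduces to the interior weak harmonicity of $z$, using $H^1_0$-approximation on each half-plane), hence smooth and harmonic. Each component of $\nabla\tilde z$ is then harmonic, and by the odd symmetry $\nabla\tilde z\in L^2(\R^2)$ with $\norm{\nabla\tilde z}_{L^2(\R^2)}^2=2\norm{\nabla z}_{L^2(\Omega)}^2$; the mean-value property gives $|\nabla\tilde z(p)|\le \frac1{|B_R|}\int_{B_R(p)}|\nabla\tilde z|\le |B_R|^{-1/2}\norm{\nabla\tilde z}_{L^2(\R^2)}\to0$ as $R\to+\infty$, so $\nabla z\equiv0$ and, since $Tz=0$, $z\equiv0$. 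This also settles the equality case left open in Theorem \ref{th::3}: any harmonic $u\in H$ differs from $P\ast_x(Tu)\in H$ by a harmonic function with zero trace, hence coincides with it, so $\norm{Tu}_{H^{\frac12}(\R)}^2=2\pi\norm u_H^2$.

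I expect the uniqueness step to be the only genuine obstacle: the existence, the energy identity and \eqref{weak-notion} are all density and integration-by-parts, whereas the statement that a finite-energy harmonic function on the half-plane with zero boundary trace must vanish really uses the geometry. The odd-reflection-plus-Liouville argument above is how I would carry it out, the one delicate point being to justify the reflection at the $H^1$ level rather than only for functions continuous up to $\{y=0\}$.
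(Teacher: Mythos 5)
Your argument is correct, but it is worth noting that the paper does not actually prove Theorem \ref{th::4}: it presents it as a known reformulation of the Dirichlet-to-Neumann characterization of the half-Laplacian, in the framework of \cite{Caffarelli-Silvestre} and \cite{Cabre-SolaMorales:layer-solutions}, so your proposal supplies a self-contained proof where the paper relies on citation. Your route --- the explicit Fourier computation $\widehat u(\xi,y)=e^{-|\xi|y}\widehat w(\xi)$ giving the energy identity for $w\in C^\infty_c(\R)$, density in $H^{\frac12}(\R)$ for general $w$, Green's formula plus \eqref{eq::55} and density of $C^\infty_c(\overline\Omega)$ in $H$ for \eqref{weak-notion}, and odd reflection plus a Liouville argument for uniqueness --- is exactly the kind of argument the cited references encapsulate, and the uniqueness step (finite Dirichlet energy, zero $L^2$ trace, hence $\nabla \tilde z\in L^2(\R^2)$ harmonic after reflection, hence zero by the mean value property) is carried out correctly, including the delicate justification of the reflection at the $H^1_{loc}$ level via $H^1_0$-approximation of the odd part of the test function. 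One point where your write-up is a shade too quick: from $\int_\Omega|\nabla u|^2+\int_{\partial\Omega}w^2=\frac{1}{2\pi}\norm{w}^2_{H^{\frac12}(\R)}<+\infty$ you conclude ``in particular $u\in H$,'' but $H$ is defined as the closure of $C^\infty_c(\overline\Omega)$ in the $\norm{\cdot}_H$ norm, so finiteness of the norm is not by itself membership; for $w\in C^\infty_c(\R)$ this is repaired in one line by cutting off, using that $u=P*_x w$ decays like $1/|(x,y)|$ and $\nabla u$ like $1/|(x,y)|^2$ at infinity and that $u$ is smooth up to $\partial\Omega$. Similarly, when you pass to the limit $u_n=P*_x w_n$ in $H$ you should say a word identifying the $H$-limit with $P*_x w$ (e.g.\ via the $L^p$ bound of Theorem \ref{thm-Stein}, which gives $\norm{u_n(\cdot,y)-(P*_xw)(\cdot,y)}_{L^2(\R)}\le\norm{w_n-w}_{L^2(\R)}$ for each $y>0$). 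With these two small additions the proof is complete and consistent with \eqref{eq::200}, \eqref{H12-norm} and Theorem \ref{th::3}.
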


The previous theorem allows to define weak solutions (in the sense of distributions) for half-Laplacian equations, that may not be necessarily equivalent to the notion of viscosity solutions introduced in Definition \ref{defi::1}. Although the introduction of a new concept may seem confusing at first, it will allow to use the available functional analysis tools. Moreover, this definition is a particular case of the one introduced in \cite{Cabre-SolaMorales:layer-solutions}. Thus we set:

\begin{defi} \label{defi-weak-solution}{\bf (Weak sub/super/solution)}\\
Let $d$ be a measurable bounded function on $\mathbb R$, and let $g\in L^2(\mathbb R)$.
We say that $w\in H^\half(\mathbb R)$ is a weak supersolution for the equation
\begin{equation}\label{weak-equation}
Lw -d(x)w=g \quad \mbox{ on }\partial\Omega
\end{equation}
if, for $u:=P*_x w$ and
$$B(u,v):=\int_{\Omega}\nabla u\cdot\nabla v+\int_{\partial\Omega} d(x)(Tu)(Tv),$$
we have that
\be\label{weak-super-solution}B(u,v)\geq \int_{\partial\Omega} gTv\quad\mbox{for all }v\in H.\ee
Respectively, $w$ is a weak subsolution if
\be\label{weak-sub-solution} B(u,v)\leq \int_{\partial\Omega} gTv\quad\mbox{for all }v\in H.\ee
Accordingly, we say that $w$ is a weak solution if it is both a weak sub and supersolution.\\
\end{defi}

We summarize the different concepts in the following lemma:

\begin{lem}\label{lemma-equivalence}{\bf (Equivalence of solutions)}\\
Given $w\in L^{\infty}(\mathbb R)\cap \mathcal C^{1,\beta}_{\mbox{loc}}(\mathbb R)$, the following two ways of compuing $Lw$, i.e,
\begin{enumerate}
\item as a Dirichlet to Neumann operator (see section 5.3.1., in particular \eqref{eq::4}-\eqref{eq::55});

\item using the L\'evy-Khintchine formula (see theorem \ref{th::2}),
\end{enumerate}
are equivalent.

If, in addition, $w\in H^\half(\mathbb R)$, then the half-Laplacian definition in term of distributions as given in theorem \ref{th::4} is also equivalent. As a consequence, the notions of viscosity and weak solutions from Definitions \ref{defi::1} and \ref{defi-weak-solution}, respectively, coincide in the smooth case.
\end{lem}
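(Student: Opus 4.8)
The plan is to reduce the whole statement to a single pointwise identity: for $w\in L^\infty(\R)\cap C^{1,\beta}_\loc(\R)$ and $u:=P*_x w$ --- which is well defined, bounded and harmonic on $\Omega$, since the Poisson kernel is nonnegative and integrates to $1$ in the space variable for every $y>0$ --- one has
$$Lw(x_0)=-\frac{\p u}{\p\nu}(x_0,0)\qquad\text{for every }x_0\in\R ,$$
where the left-hand side is the L\'evy--Khintchine expression of Theorem~\ref{th::2}. Granting this, the equivalence of the Dirichlet-to-Neumann and the L\'evy--Khintchine ways of computing $Lw$ is immediate. For $w\in C^\infty_c(\R)$ the identity is the classical computation: writing $u(x,y)=c_2\int_\R \frac{y}{|x-\xi|^2+y^2}\,w(\xi)\,d\xi$ and using $c_2\int_\R \frac{y}{|x-\xi|^2+y^2}\,d\xi=1$, one gets $\p_y u(x,y)=c_2\int_\R \frac{|x-\xi|^2-y^2}{(|x-\xi|^2+y^2)^2}\,\big(w(\xi)-w(x)\big)\,d\xi$ and passes to the limit $y\to0^+$; since $\nu=-e_y$ on $\p\Omega$, this yields $-\p_\nu u(x,0)$, and the resulting principal value integral coincides with the L\'evy--Khintchine one because the correction $zw'(x)1_{\{|z|\le 1\}}$ is odd and integrates to zero.

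To upgrade from $C^\infty_c$ to $L^\infty\cap C^{1,\beta}_\loc$ I would localize. Fix $x_0$, pick $\chi\in C^\infty_c(\R)$ with $\chi\equiv1$ near $x_0$, and split $w=w_1+w_2$ with $w_1:=\chi w\in C^{1,\beta}_c(\R)$ and $w_2:=(1-\chi)w$, which vanishes on a neighbourhood of $x_0$; correspondingly $u=u_1+u_2$ with $u_j:=P*_x w_j$. Since $w_2$ vanishes on an interval around $x_0$, odd reflection shows $u_2$ extends harmonically across that interval, hence is smooth up to $\p\Omega$ near $(x_0,0)$, and differentiating under the integral sign gives $-\p_\nu u_2(x_0,0)=\frac1\pi\int_\R \frac{w_2(x_0+z)}{z^2}\,dz$, a convergent tail integral which is exactly $Lw_2(x_0)$ (recall $w_2(x_0)=w_2'(x_0)=0$). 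For $w_1\in C^{1,\beta}_c$ I mollify, $w_1^\delta:=w_1*\rho_\delta\in C^\infty_c(\R)$, which converges to $w_1$ in $C^1$ with uniformly bounded $C^{1,\beta}$ seminorm. The identity holds for each $w_1^\delta$ by the previous step; letting $\delta\to0$, $Lw_1^\delta(x_0)\to Lw_1(x_0)$ by dominated convergence (near $z=0$ the integrand is dominated by $[\,(w_1^\delta)'\,]_{C^\beta}|z|^{\beta-1}$ uniformly in $\delta$, the tail by $\|w_1\|_{L^\infty}$), while the harmonic extensions $P*_x w_1^\delta$ converge to $u_1$ in $C^1_\loc(\overline\Omega)$ by the boundary Schauder estimates for the half-plane harmonic extension recalled in Section~\ref{section-half-Laplacian}, so their Neumann data at $(x_0,0)$ converge. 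Adding the contributions of $u_1$ and $u_2$ yields the claimed identity, hence the equivalence of (1) and (2).

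When in addition $w\in H^\half(\R)$, Theorem~\ref{th::4} gives $u=P*_x w\in H$ with $\langle -Lw,Tv\rangle_{H^{-\half}\times H^\half}=\int_\Omega\nabla u\cdot\nabla v$ for all $v\in H$, $Lw$ here meaning the distributional half-Laplacian. To identify this distribution with the pointwise function above, test with $v\in C^\infty_c(\overline\Omega)$: since $u$ is harmonic and, by $w\in C^{1,\beta}_\loc$, of class $C^1$ up to $\p\Omega$ on $\mathrm{supp}\,v$, Green's identity gives $\int_\Omega\nabla u\cdot\nabla v=-\int_{\p\Omega}(Lw)\,Tv$ with $Lw$ the pointwise function; density of $C^\infty_c(\overline\Omega)$ in $H$ extends this to all $v\in H$, so the three notions of $Lw$ agree. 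Finally, for smooth $w$ a viscosity sub/super/solution of $Lw-d(x)w=g$ is the same thing as a pointwise (classical) one by the remark following Definition~\ref{defi::1}, and substituting the identity $\int_\Omega\nabla u\cdot\nabla v=-\int_{\p\Omega}(Lw)\,Tv$ into the bilinear form $B$ of Definition~\ref{defi-weak-solution} turns the pointwise (in)equalities defining a classical sub/super/solution into the weak ones tested against $Tv$, the traces $Tv$ exhausting the relevant cone of $H^\half(\R)$. The main obstacle, I expect, lies in the two regularity points buried in the limiting arguments --- the $C^1_\loc(\overline\Omega)$-convergence of the extensions $P*_x w_1^\delta$ and the validity of Green's identity up to $\p\Omega$ --- both of which rest on the boundary regularity theory for the harmonic extension collected in Section~\ref{section-half-Laplacian}.
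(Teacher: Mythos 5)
Your argument is essentially correct, but it takes a genuinely different route from the paper: the paper does not prove Lemma \ref{lemma-equivalence} at all, it simply refers the reader to the paper of Droniou--Imbert \cite{Droniou-Imbert:fractal-first-order} (Theorem \ref{th::2} is quoted from the same source), whereas you supply a self-contained proof. Your scheme --- the explicit Poisson-kernel computation of $\partial_y u(x,y)$ after subtracting $w(x)$, the limit $y\to 0^+$ using $C^{1,\beta}_{\mbox{loc}}$ to control $|z|^{\beta-1}$ near $z=0$ and $L^\infty$ in the tail, the cut-off decomposition $w=\chi w+(1-\chi)w$ with reflection for the far part, mollification for the compactly supported part, and then Green's identity on $\{y>\epsilon\}$ to match the pointwise operator with the distributional pairing of Theorem \ref{th::4} --- is the standard way to prove this equivalence and is sound; it buys the reader a proof inside the paper at the cost of about a page, while the paper's choice buys brevity by outsourcing exactly this verification. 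Two details in your write-up deserve tightening. First, the $C^1_{\mbox{loc}}(\overline\Omega)$ convergence of $P*_x w_1^\delta$ is not literally ``recalled in Section \ref{section-half-Laplacian}'': Theorem \ref{local-grad-estimates} is purely interior, and Proposition \ref{prop-regularity}/Corollary \ref{cor-regularity} concern the Neumann (boundary-reaction) problem, not the Dirichlet extension; you should either invoke classical boundary Schauder estimates for the half-plane Dirichlet problem or, more simply, differentiate the Poisson formula directly and estimate $\partial_y(P*_x w_1^\delta)-\partial_y(P*_x w_1)$ using the uniform $C^{1,\beta}$ bound, which avoids any citation. Second, in the distributional step the extension ``to all $v\in H$'' is unnecessary and slightly glossed (it would require integrability of $(Lw)\,Tv$ for general $v\in H$): testing with $v\in C^\infty_c(\overline\Omega)$, whose traces already contain $C^\infty_c(\R)$, identifies the distribution $Lw$ with the pointwise function, and that is all the final assertion about viscosity versus weak solutions needs, together with the remark after Definition \ref{defi::1} and the sign convention $\nu=-e_y$, which you do track correctly.
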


\begin{proof}
We refer the reader to the paper \cite{Droniou-Imbert:fractal-first-order}.
\end{proof}

\subsection{Miscellanea}

Here we remind the reader some basic facts.

For harmonic extensions, we have (cf. \cite{Stein-Weiss:Fourier-analysis}, chapter II.2):

\begin{theo}[\textbf{$L^p$ bound}]\label{thm-Stein}
If $w\in L^p(\mathbb R)$, $1\leq p\leq \infty$, and $u=P*_x w$ is the Poisson integral of $w$, then $u$ is harmonic in $\mathbb R^2_+$, $\lim_{y\to 0} u(x,y)=w(x)$ for almost every $x\in\mathbb R$ and
$$\norm{u(\cdot,y)}_{L^p(\mathbb R)}=\lp\int_{\mathbb R}\abs{u(x,y)}^p\;dx\rp^{1/p}\leq \norm{w}_{L^p(\mathbb R)}$$
for all $y>0$.
If $w$ is bounded on $\mathbb R$, then also $u$ is uniformly bounded in $\overline{\mathbb R^2_+}$.\\
\end{theo}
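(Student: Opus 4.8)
The plan is to prove the four assertions directly from the explicit form of the Poisson kernel $P_y(x)=c_2\,y/(x^2+y^2)$, normalized so that $c_2=1/\pi$ and $\int_{\R}P_y=1$ for every $y>0$; this kernel is nonnegative, radially decreasing in $x$, and the family $\{P_y\}_{y>0}$ is an approximate identity as $y\to 0^+$. Every step below is classical (see \cite{Stein-Weiss:Fourier-analysis}, Chapters II.2 and III), so I only indicate the structure.

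First I would treat harmonicity and the $L^p$ bound. As a function of $(x,y)\in\R^2_+$, the map $(x,y)\mapsto c_2\,y/(x^2+y^2)$ is harmonic — for instance because it is, up to the constant, the imaginary part of $-1/(x+iy)$ — and all of its $x$- and $y$-derivatives are dominated by integrable functions of $x$, uniformly for $(x,y)$ ranging over compact subsets of the open half-plane; hence one may differentiate $u=P*_x w$ under the integral sign to obtain $\Delta u=0$ on $\R^2_+$. For the norm bound, Minkowski's integral inequality gives, for $1\le p<\infty$,
\[
\|u(\cdot,y)\|_{L^p(\R)}=\|P_y*w\|_{L^p(\R)}\le\int_{\R}P_y(\xi)\,\|w(\cdot-\xi)\|_{L^p(\R)}\,d\xi=\|P_y\|_{L^1(\R)}\,\|w\|_{L^p(\R)}=\|w\|_{L^p(\R)},
\]
while for $p=\infty$ the same conclusion is the pointwise estimate $|u(x,y)|\le\|w\|_{L^\infty(\R)}\int_{\R}P_y=\|w\|_{L^\infty(\R)}$. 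This last inequality, valid for all $y>0$, together with $u(\cdot,0)=w\in L^\infty(\R)$, is exactly the assertion that $u$ is uniformly bounded on $\overline{\R^2_+}$ whenever $w$ is bounded.

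Next I would establish the boundary behavior. The identities $\int_{\R}P_y=1$ and $\int_{|x|\ge\delta}P_y(x)\,dx\to 0$ as $y\to 0$ (for each fixed $\delta>0$) express that $\{P_y\}$ is an approximate identity; writing $P_y*w-w=\int_{\R}P_y(\xi)\,(w(\cdot-\xi)-w)\,d\xi$ and invoking continuity of translation in $L^p$ yields $u(\cdot,y)\to w$ in $L^p(\R)$ for $1\le p<\infty$. For the almost-everywhere statement I would use the radial-majorant principle: since for each $y$ the function $P_y$ is radially decreasing in $x$ and $\sup_{y>0}\int_{\R}P_y=1<\infty$, one has the maximal bound $\sup_{y>0}|(P_y*w)(x)|\le C\,Mw(x)$, where $M$ is the Hardy–Littlewood maximal operator; as $M$ is of weak type $(1,1)$ and strong type $(p,p)$ for $p>1$, and pointwise convergence $u(x,y)\to w(x)$ is clear for $w\in C_c(\R)$ (a dense subclass), the standard approximation argument gives $\lim_{y\to 0}u(x,y)=w(x)$ for a.e.\ $x\in\R$ and every $w\in L^p(\R)$.

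The only genuinely nontrivial ingredient is the almost-everywhere convergence, which rests on the maximal-function machinery; the rest is a direct computation together with Minkowski's inequality. Since all of this is contained in the cited reference, in the paper it suffices to record the statement and point to \cite{Stein-Weiss:Fourier-analysis}.
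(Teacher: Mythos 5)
Your proposal is correct: harmonicity by differentiating under the integral, the $L^p$ bound by Minkowski's integral inequality with $\|P_y\|_{L^1}=1$, and the a.e.\ boundary convergence via the approximate-identity/maximal-function argument are exactly the classical proof. The paper itself gives no proof of this statement --- it simply quotes it with a reference to \cite{Stein-Weiss:Fourier-analysis}, Chapter II.2 --- and your argument is precisely the one contained in that reference, so the two approaches coincide.
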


As a consequence, given $w\in H^\half(\mathbb R)$, then for $u=P*_x w\in H$ we have that $u\in W^{1,2}(B_R^+)$ for every half ball $B_R^+$, since
$$\int_{B_R^+} u^2\,dxdy\leq \int_{y\in (0,R)}\int_{x\in\mathbb R} u^2\,dxdy\leq R \int_{\partial\Omega} w^2.$$

Moreover (c.f. \cite{Evans}, Chapter 2, Section 2, Theorem 7):
\begin{theo}[\textbf{Local gradient estimates}]\label{local-grad-estimates}
Assume that $u$ is harmonic in a domain $U$. Then
$$\abs{D^\alpha u(x_0)}\leq \frac{C_k}{r^{k}} \norm{u}_{L^{\infty}(B_r(x_0))},$$
for every ball $B_r(x_0)\subset U$, and multiindex $\abs{\alpha}=k$.
\end{theo}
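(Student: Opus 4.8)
The plan is to follow the classical argument (as in \cite{Evans}), which rests on two facts: harmonic functions satisfy the mean value property over balls, and every partial derivative of a harmonic function is again harmonic, since $\Delta$ commutes with $\partial_{x_i}$. I would prove the estimate by induction on $k=\abs{\alpha}$, being careful to choose the intermediate radii so that all the auxiliary balls stay inside $U$.

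For the base case $k=1$: fix $B_r(x_0)\subset U$. Since $u_{x_i}$ is harmonic, the mean value property on $B_{r/2}(x_0)$ gives $u_{x_i}(x_0)=\frac{1}{\abs{B_{r/2}(x_0)}}\int_{B_{r/2}(x_0)}u_{x_i}\,dx$. Applying the divergence theorem, $\int_{B_{r/2}(x_0)}u_{x_i}\,dx=\int_{\partial B_{r/2}(x_0)}u\,\nu_i\,dS$, whence
$$\abs{u_{x_i}(x_0)}\le \frac{\abs{\partial B_{r/2}(x_0)}}{\abs{B_{r/2}(x_0)}}\,\norm{u}_{L^\infty(\partial B_{r/2}(x_0))}=\frac{2n}{r}\,\norm{u}_{L^\infty(B_r(x_0))},$$
so the estimate holds with $C_1=2n$, $n$ being the ambient dimension (here $n=2$).

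For the inductive step: suppose the estimate holds for all multiindices of order $k-1$ with constant $C_{k-1}$, on every subball. Given $B_r(x_0)\subset U$ and $\abs{\alpha}=k$, write $D^\alpha u=\partial_{x_i}(D^\beta u)$ with $\abs{\beta}=k-1$. Applying the case $k=1$ to the harmonic function $D^\beta u$ on the ball $B_{r/k}(x_0)$ gives $\abs{D^\alpha u(x_0)}\le \frac{2nk}{r}\,\norm{D^\beta u}_{L^\infty(B_{r/k}(x_0))}$. For each $x\in B_{r/k}(x_0)$ the ball $B_{(k-1)r/k}(x)$ lies in $B_r(x_0)\subset U$, so the inductive hypothesis yields $\abs{D^\beta u(x)}\le \frac{C_{k-1}}{((k-1)r/k)^{k-1}}\,\norm{u}_{L^\infty(B_r(x_0))}$. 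Combining the two bounds gives $\abs{D^\alpha u(x_0)}\le \frac{C_k}{r^k}\,\norm{u}_{L^\infty(B_r(x_0))}$ with $C_k=\frac{2n\,k^k}{(k-1)^{k-1}}\,C_{k-1}$, a finite constant depending only on $k$ and $n$.

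I do not expect any real obstacle: the one nontrivial input is the mean value property, which itself follows from Green's identity / the divergence theorem, and the rest is bookkeeping of constants. The only point requiring a little care is the choice of radii ($r/k$ at the outer application and $(k-1)r/k$ for the inductive hypothesis) so that every auxiliary ball remains inside $U$ and the resulting constant stays finite.
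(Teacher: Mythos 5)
Your proof is correct: the base case via the mean value property and the divergence theorem, and the induction with the radius split $r/k$ and $(k-1)r/k$, are exactly the standard argument, and the constants you track are finite and depend only on $k$ and the dimension. The paper does not prove this statement at all but simply cites it (Evans, Chapter 2, Section 2, Theorem 7), and your argument is essentially the proof given in that reference, so there is nothing to reconcile.
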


Next, we give some classical facts on Sobolev spaces, from \cite{Mazja} (Section 7.1.3, Theorem 4, part (6) and Section 8.8.1, Theorem 4):

\begin{theo}\label{Sobolev}{\bf (Sobolev embedding)}
The injection
$$H^{\frac{1}{2}}(\mathbb R)\hookrightarrow L^p(\mathbb R)$$
is continuous, for any $p>1$.
Moreover, if $K\subset \mathbb R$ is a compact subset, then  the restriction map
$$H^{\frac{1}{2}}(\mathbb R)\hookrightarrow L^p(K)$$
is a compact embedding for any $p>2$.
\end{theo}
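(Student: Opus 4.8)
The statement has two independent parts: a continuous embedding $H^\half(\R)\hookrightarrow L^p(\R)$, and, on a fixed compact set, a \emph{compact} embedding for $p>2$. Since $H^\half(\R)\subset L^2(\R)$, the continuous embedding can hold only for $2\le p<\infty$, which I take to be the intended range. The plan is to handle the continuous embedding by a frequency decomposition together with the Hausdorff--Young inequality, and then to obtain the compact embedding by combining a Rellich-type statement in $L^2$ (via the Fréchet--Kolmogorov--Riesz criterion) with Hölder interpolation against the $L^q$-bound established in the first part.

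\textbf{Continuous embedding.} Given $w\in H^\half(\R)$, I would split at unit frequency, $w=w_0+w_1$ with $\widehat{w_0}=\widehat w\,1_{\{|\xi|\le 1\}}$ and $\widehat{w_1}=\widehat w\,1_{\{|\xi|>1\}}$. The low part has $\widehat{w_0}\in L^2(\R)$ with compact support, hence $\widehat{w_0}\in L^1(\R)$ by Cauchy--Schwarz, so $w_0\in L^\infty(\R)\cap L^2(\R)\subset L^p(\R)$ for every $p\in[2,\infty]$. For the high part, write $\widehat{w_1}(\xi)=|\xi|^{-\half}\big(|\xi|^{\half}\widehat{w_1}(\xi)\big)$; here $|\xi|^{\half}\widehat{w_1}\in L^2(\R)$ by definition of $H^\half(\R)$, while $|\xi|^{-\half}1_{\{|\xi|>1\}}\in L^q(\R)$ for every $q>2$, so Hölder's inequality gives $\widehat{w_1}\in L^r(\R)$ with $\frac1r=\half+\frac1q\in(\half,1)$, and then the Hausdorff--Young inequality yields $w_1\in L^{r'}(\R)$ with $r'\in(2,\infty)$. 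Letting $q$ range over $(2,\infty)$ covers every $p\in(2,\infty)$, and keeping track of the norms at each step (or a closed-graph argument) gives continuity. One could instead pass through the harmonic extension $u=P*_x w$ of Theorem \ref{th::4} and a trace--Sobolev inequality on $\R^2_+$, but that route reduces to exactly the same one-dimensional estimate.

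\textbf{Compact embedding.} Fix a compact $K\subset\R$ and $p>2$, and let $\{w_n\}$ be bounded in $H^\half(\R)$. I would first localize: choose $\chi\in C^\infty_c(\R)$ with $\chi\equiv 1$ on $K$; then $\{\chi w_n\}$ is bounded in $H^\half(\R)$ (multiplication by a fixed smooth compactly supported function is bounded on $H^\half(\R)$) with supports in one fixed compact. The crucial estimate is uniform $L^2$-equicontinuity under translations: using Plancherel, $\abs{e^{ih\xi}-1}^2\le\min(|h\xi|^2,4)\le 2|h|\,|\xi|$, and the Fourier definition of $H^\half(\R)$,
\[
\norm{(\chi w_n)(\cdot+h)-\chi w_n}_{L^2(\R)}^2=\frac{1}{2\pi}\int_{\R}\abs{e^{ih\xi}-1}^2\abs{\widehat{\chi w_n}(\xi)}^2\,d\xi\le C\,|h|\,\norm{\chi w_n}_{H^\half(\R)}^2 .
\]
Together with the uniform $L^2$-bound and the fixed compact supports, the Fréchet--Kolmogorov--Riesz criterion shows $\{\chi w_n\}$ is precompact in $L^2(\R)$, so along a subsequence $w_n\to w$ in $L^2(K)$. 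Finally, for $p>2$ pick $q\in(p,\infty)$; by the continuous embedding $\{w_n\}$ is bounded in $L^q(\R)$, and Hölder interpolation on $K$,
\[
\norm{w_n-w}_{L^p(K)}\le\norm{w_n-w}_{L^2(K)}^{1-\theta}\norm{w_n-w}_{L^q(K)}^{\theta},\qquad \frac1p=\frac{1-\theta}{2}+\frac{\theta}{q},\quad\theta\in(0,1),
\]
shows $w_n\to w$ in $L^p(K)$. Hence the embedding into $L^p(K)$ is compact.

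\textbf{Main obstacle.} The only genuinely analytic point is the Rellich step — equivalently, the uniform translation-equicontinuity above, i.e. uniform smallness of the high-frequency tails of $\widehat{\chi w_n}$ — and this is precisely where the positive smoothness exponent $\half>0$ enters; everything else is bookkeeping. The hypothesis $p>2$ is not needed for compactness itself (Rellich already gives $p=2$) but is what lets the interpolation step go through with no extra effort; alternatively the whole compact embedding can be invoked directly from the Rellich--Kondrachov theorem for Sobolev--Slobodeckij spaces, as in \cite{Mazja}.
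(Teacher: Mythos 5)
Your proof is correct, but it is genuinely different from what the paper does: the paper offers no proof at all for this theorem, quoting it as a classical fact from \cite{Mazja} (Section 7.1.3, Theorem 4, part (6) and Section 8.8.1, Theorem 4). Your argument is a self-contained, elementary derivation: the frequency splitting at $|\xi|=1$ plus H\"older and Hausdorff--Young gives the continuous embedding for all $2\le p<\infty$ with explicit norm control, and the localization by a cut-off, the translation estimate $\abs{e^{ih\xi}-1}^2\le 2|h||\xi|$ with the Fr\'echet--Kolmogorov--Riesz criterion, and interpolation give the compactness on compacta; all the facts you invoke without proof (boundedness of multiplication by a fixed $C^\infty_c$ function on $H^{\frac12}(\mathbb R)$, Hausdorff--Young, Kolmogorov--Riesz) are standard and true. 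Two remarks. First, your restriction of the global embedding to $p\ge 2$ is a genuine and correct correction of the statement: for $1<p<2$ the embedding $H^{\frac12}(\mathbb R)\hookrightarrow L^p(\mathbb R)$ fails (e.g. $w(x)=(1+|x|)^{-a}$ with $\frac12<a\le \frac1p$ lies in $H^1(\mathbb R)\subset H^{\frac12}(\mathbb R)$ but not in $L^p(\mathbb R)$), and the paper only ever uses exponents $\ge 2$, so nothing downstream is affected. Second, a very minor loose end in your compactness step: to run the interpolation you need $\norm{w_n-w}_{L^q(K)}$ bounded, i.e. the limit $w$ itself in $L^q(K)$; this follows from Fatou along a subsequence converging a.e. (or from the weak $H^{\frac12}$ limit), and is worth one line. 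The trade-off between the two routes is the usual one: the citation buys brevity and the full Sobolev--Slobodeckij generality, while your argument buys self-containedness using only Plancherel-level tools, at the cost of about a page.
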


\subsection{Layer solutions for half-Laplacian problems}

The paper \cite{Cabre-SolaMorales:layer-solutions} is concerned with the boundary reaction problem
\begin{equation}\label{Cabre-SolaMorales-layer}
\left\{\begin{split}
\Delta u &= 0 &\quad \mbox{in } \Omega,\\
\frac{\partial u}{\partial \nu} &= f(u) &\quad \mbox{on } \partial\Omega,
\end{split}\right.\end{equation}
for some non-linearity $f$.
We say that $u$ is a \emph{layer solution} of \eqref{Cabre-SolaMorales-layer} if it satisfies \eqref{Cabre-SolaMorales-layer}, $u_x>0$ on $\partial\Omega$ and
$$\lim_{x \to -\infty} u(x,0)=0,\quad \lim_{x\to +\infty} u(x,0)=1.$$

Since we are only concerned with smooth solutions of \eqref{Cabre-SolaMorales-layer}, then we do not need to worry here about the different concepts of weak or viscosity solutions for half-Laplacian problems previously introduced.

In the next theorem we summarize the basic facts about existence and behavior of layer solutions for \eqref{Cabre-SolaMorales-layer} (see theorems 1.2, 1.4, 1.6, and equation (2.27) from \cite{Cabre-SolaMorales:layer-solutions}).

\begin{theo}{\bf (Known existence and properties of the layer solution)}\label{thm-layer-solutions}\\
Let $f$ be any $\mathcal C^{1,\beta}$ function with $0 < \beta < 1$ and such that $W'= -f$. Then there exists a layer solution $u_W\in \mathcal C^{2,\beta}_{\mbox{loc}}(\overline\Omega)$ of \eqref{Cabre-SolaMorales-layer} if and only if
$$W'(0)=W'(1)=0\quad\mbox{and}\quad W> W(0)=W(1)\quad\mbox{in } (0,1).$$
Moreover,
\begin{enumerate}
\item If $W''(0), W''(1)>0$, then a layer solution of \eqref{Cabre-SolaMorales-layer} is unique up to translations in the $x$ variable, and it satisfies
\be\label{derivative-positive}(u_W)_x>0\quad\mbox{in }\overline{\Omega}.\ee
\item We have the estimate
\be\label{gradient-bound}\abs{\nabla u_W(x,y)}\leq \frac{C}{1+\abs{(x,y)}}, \quad \mbox{for all }x\in\mathbb R, y\geq 0.\ee
\item Set $\phi(x):=u_W(x,0)$. We have the bounds
\begin{equation}
\label{growth-not-sharp}\abs{\phi(x)-H(x)}\leq \frac{C}{1+\abs{x}} \quad\mbox{for all }x\in\mathbb R,
\end{equation}
\be\label{derivative-bound}
\frac{C_1}{1+\abs{x}^2} \leq \phi'(x)\leq \frac{C_2}{1+\abs{x}^2},\quad \mbox{for all }x\in\mathbb R,
\ee
for some positive constants $C,C_1,C_2$.
\item A layer solution is always a stable solution. This means that
\be\label{stable-solution}\int_{\Omega}\abs{\nabla v}^2+\int_{\partial\Omega}W''(\phi)v^2\geq 0\ee
for every function $v\in\mathcal C^1(\overline{\Omega})$ with compact support in $\overline{\Omega}$.\\
\end{enumerate}
\end{theo}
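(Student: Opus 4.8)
\noindent Since this theorem only collects facts established by Cabr\'e and Sol\`a-Morales, the plan is to match each assertion to the corresponding result of \cite{Cabre-SolaMorales:layer-solutions} and to supply by elementary arguments the handful of places where the form we state is a consequence rather than a verbatim quotation. First I would settle existence, uniqueness and monotonicity. The equivalence between solvability of \eqref{Cabre-SolaMorales-layer} by a layer solution and the conditions $W'(0)=W'(1)=0$, $W>W(0)=W(1)$ in $(0,1)$ is Theorem~1.2 of \cite{Cabre-SolaMorales:layer-solutions} (necessity of the potential condition comes from a Hamiltonian-type identity for the harmonic extension, sufficiency from a monotone construction); the $\mathcal C^{2,\beta}_{\mathrm{loc}}(\overline\Omega)$ regularity then follows from Schauder theory for the extension problem with a $\mathcal C^{1,\beta}$ boundary nonlinearity. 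Under $W''(0),W''(1)>0$, uniqueness up to $x$-translations is Theorem~1.4 there. For the strict monotonicity \eqref{derivative-positive} I would note that $v_0:=(u_W)_x\ge 0$ is harmonic in $\Omega$ and satisfies $\partial_\nu v_0=-W''(\phi)\,v_0$ on $\partial\Omega$, so the strong maximum principle and Hopf's lemma forbid it to vanish at any point of $\overline\Omega$ unless it is identically zero.

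Next I would treat the decay estimates. The gradient bound \eqref{gradient-bound} is exactly equation~(2.27)/Theorem~1.6 of \cite{Cabre-SolaMorales:layer-solutions}. The upper half of \eqref{derivative-bound}, $\phi'(x)\le C_2/(1+|x|^2)$, does \emph{not} follow from \eqref{gradient-bound} (which only yields $C/(1+|x|)$ on $\{y=0\}$); this is the point where the refined asymptotic analysis of the layer profile in \cite{Cabre-SolaMorales:layer-solutions} is genuinely used, since there $u_W$ is shown to approach its limits at $x=\pm\infty$ at the algebraic rate of the half-Laplacian fundamental solution, and differentiating gives the sharp quadratic decay. The lower bound $\phi'(x)\ge C_1/(1+|x|^2)$ I would get by combining strict positivity of $\phi'$ on compacts with a boundary Harnack inequality for the harmonic extension of $v_0$ near infinity. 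Finally \eqref{growth-not-sharp} is a direct integration of \eqref{derivative-bound}: for $x>0$, $0<1-\phi(x)=\int_x^{+\infty}\phi'(t)\,dt\le C_2\int_x^{+\infty}\frac{dt}{1+t^2}\le C/(1+x)$, and symmetrically for $x<0$ using $\phi(-\infty)=0$.

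It remains to verify the stability inequality \eqref{stable-solution}, the classical consequence of having a positive solution of the linearized problem. Given $v\in\mathcal C^1(\overline\Omega)$ with compact support in $\overline\Omega$, set $\zeta:=v/v_0$, which is Lipschitz with compact support since $v_0>0$ is continuous. Expanding and using $\zeta^2|\nabla v_0|^2+2\zeta v_0\nabla v_0\cdot\nabla\zeta=\nabla v_0\cdot\nabla(v_0\zeta^2)$ gives
$$\int_\Omega|\nabla v|^2=\int_\Omega \nabla v_0\cdot\nabla(v_0\zeta^2)+\int_\Omega v_0^2|\nabla\zeta|^2.$$
Integrating the first term by parts (no contribution at infinity since $\zeta$ has compact support), and using $\Delta v_0=0$ in $\Omega$ and $\partial_\nu v_0=-W''(\phi)v_0$ on $\partial\Omega$, it becomes $-\int_{\partial\Omega}W''(\phi)v_0^2\zeta^2=-\int_{\partial\Omega}W''(\phi)v^2$; hence $\int_\Omega|\nabla v|^2+\int_{\partial\Omega}W''(\phi)v^2=\int_\Omega v_0^2|\nabla\zeta|^2\ge 0$, which is the argument of \cite{Cabre-SolaMorales:layer-solutions}. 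The only genuinely nontrivial step is the sharp quadratic decay of $\phi'$ in \eqref{derivative-bound}: as noted, it is not implied by the first-order gradient bound and rests on the asymptotic expansion of $u_W$ from \cite{Cabre-SolaMorales:layer-solutions}.
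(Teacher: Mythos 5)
Your proposal is correct and follows essentially the same route as the paper, which offers no proof of this statement at all: it is presented purely as a summary of Theorems 1.2, 1.4, 1.6 and equation (2.27) of \cite{Cabre-SolaMorales:layer-solutions}, exactly the results you cite. The extra details you supply (Hopf/strong maximum principle for $(u_W)_x$, integration of \eqref{derivative-bound} to get \eqref{growth-not-sharp}, and the $\zeta=v/v_0$ computation for stability) are the standard arguments from that reference and are sound.
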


One of the crucial ingredients in the present work is the precise knowledge of the asymptotics for $\phi$ as given in \eqref{eq::6}. In order to improve from the known \eqref{growth-not-sharp}, we will use a comparison argument with the explicit layer solution of the Peierls-Nabarro problem (see \cite{Cabre-SolaMorales:layer-solutions}, Lemma 2.1, for the proofs, or the original paper \cite{Toland:Peierls-Nabarro}): for each $a>0$, consider the equation
\begin{equation}\label{problem-model}\left\{\begin{split}
&\Delta u^a= \;0 \quad &\mbox{in } \Omega,\\
&\frac{\partial u^a}{\partial \nu}= -V_a'(u^a) \quad &\mbox{on } \partial\Omega,
\end{split}\right.
\end{equation}
with potential given by
$$V_a'(t)=-\frac{1}{2\pi a}\sin\left[2\pi\lp t-\tfrac{1}{2}\rp\right].$$
It is well known that the function
\begin{equation}\label{model-layer}
u^a(x,y)=\frac{1}{\pi}\arctan\frac{x}{y+a}+\frac{1}{2}
\end{equation}
is a layer solution for the problem \eqref{problem-model} and, in fact, it is the only one.\\

A simple computation gives that
\begin{equation}\label{computation-layer}
u^{a}_x\leq a\frac{\partial u_x^{a}}{\partial\nu}+2u_x^{a}\end{equation}
along $\partial\Omega$.
Set $\phi^a(x):=u^a(x,0)$.
We  know explicitly its asymptotic behavior
\begin{equation}\label{asymptotic-layer}
\left|\phi^a(x)-H(x)+\frac{a}{\pi x}\right| \quad \le \quad \frac{C}{1+\abs{x}^2}   \quad \mbox{for all }x\in\mathbb R,
\end{equation}
an improvement from \eqref{growth-not-sharp}.\\

Finally, fix $\bar u:=(u_W)_x\in \mathcal C^{1,\beta}_{\mbox{loc}}(\bar \Omega)$, and note that $\bar u=P*_x\phi'$. Then:

\begin{lem}\label{properties}{\bf (Properties of $\phi'$)}. For $\bar u$ defined as $\bar u=P*_x \phi'$, we have that
\begin{enumerate}
\item $\bar u\in H$.
\item $\phi'$ is a weak solution of $Lw-W''(\phi)w=0$ in the sense of Definition \ref{defi-weak-solution}, i.e.,
\begin{equation}\label{eq::907}
B(\bar u,v)=0 \quad \mbox{for all } v\in H.
\end{equation}
    \end{enumerate}
\end{lem}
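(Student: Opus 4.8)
The plan is to establish the two claims by exploiting that $\bar u = P*_x\phi'$ is, up to the right identifications, the partial derivative in $x$ of the layer extension $u_W$, together with the stability inequality \eqref{stable-solution} and the variational characterization of the harmonic extension from Theorem \ref{th::4}. For claim (1), that $\bar u\in H$, the key is that $H$ is the closure of $C^\infty_c(\overline\Omega)$ in the norm $\norm{\cdot}_H$ with $\norm{u}_H^2 = \int_\Omega|\nabla u|^2 + \int_{\partial\Omega}(Tu)^2$, so I need $\int_\Omega|\nabla\bar u|^2<+\infty$ and $\int_{\partial\Omega}(\phi')^2<+\infty$. The boundary term is finite by the decay $\phi'(x)\le C/(1+x^2)$ from \eqref{eq::5} (equivalently \eqref{derivative-bound}). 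For the Dirichlet integral: differentiating $u_W$ in $x$ gives a bounded harmonic function $\bar u$ on $\Omega$ with $\bar u(x,0)=\phi'(x)$, and $|\nabla\bar u| = |\nabla (\partial_x u_W)| = |\partial_x \nabla u_W|$ can be controlled away from the boundary by interior gradient estimates (Theorem \ref{local-grad-estimates}) applied to the harmonic function $\nabla u_W$, using the decay \eqref{gradient-bound} of $\nabla u_W$ itself; near the boundary one uses the $C^{2,\beta}_{loc}(\overline\Omega)$ regularity of $u_W$ plus the decay to get an integrable bound. Concretely, I expect $|\nabla\bar u(x,y)|\le C/(1+|(x,y)|^2)$, which is square-integrable on $\R^2_+$. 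Then $\bar u$ is approximated in $\norm{\cdot}_H$ by cut-offs $\chi_R\bar u$ with $\chi_R$ a smooth truncation, the cut-off error going to zero because of the quadratic decay; this places $\bar u\in H$. Alternatively, and more cleanly, since $\phi'\in H^{1/2}(\R)$ (its $\dot H^{1/2}$ seminorm equals $2\pi\int_\Omega|\nabla\bar u|^2$ which we just bounded, and its $L^2$ norm is finite), Theorem \ref{th::4} directly gives that the harmonic extension $P*_x\phi'$ lies in $H$ — so claim (1) reduces to checking $\phi'\in H^{1/2}(\R)$.

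For claim (2), I want $B(\bar u,v) = \int_\Omega \nabla\bar u\cdot\nabla v + \int_{\partial\Omega} W''(\phi)(T\bar u)(Tv) = 0$ for all $v\in H$. The formal reason is that $\bar u = \partial_x u_W$ solves the linearized problem: differentiating $\Delta u_W = 0$ in $x$ gives $\Delta\bar u = 0$ in $\Omega$, and differentiating the boundary condition $\partial_\nu u_W = f(u_W) = -W'(\phi)$ in $x$ gives $\partial_\nu\bar u = -W''(\phi)\phi' = -W''(\phi)(T\bar u)$ on $\partial\Omega$. Multiplying by a test function $v\in C^\infty_c(\overline\Omega)$ and integrating by parts (Green's identity on $\Omega$) then yields $\int_\Omega\nabla\bar u\cdot\nabla v = -\int_{\partial\Omega}(\partial_\nu\bar u)(Tv) = \int_{\partial\Omega}W''(\phi)(T\bar u)(Tv)$, i.e.\ $B(\bar u,v)=0$; the general $v\in H$ follows by density once both bilinear forms are continuous on $H$, which requires $W''(\phi)\in L^\infty$ (true since $W\in C^{2,\beta}$) and the $L^2(\partial\Omega)$ control of traces. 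The integration by parts must be justified at infinity: the boundary terms at $|x|=R$ and at $y=R$ vanish in the limit because $\nabla\bar u$ decays quadratically and $\bar u$, $v$ are bounded (and $v$ has compact support for the dense subclass, so actually only the regularity/decay of $\bar u$ is at stake).

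The main obstacle I anticipate is the rigorous justification that $\bar u := P*_x\phi'$ genuinely coincides with $\partial_x u_W$ and that this object has enough decay and regularity up to the boundary to make both the $H$-membership and the integration by parts legitimate — in other words, transferring the ``obvious'' formal identities $\Delta(\partial_x u_W)=0$ and $\partial_\nu(\partial_x u_W) = -W''(\phi)\phi'$ into statements about the specific harmonic extension $P*_x\phi'$. Since $u_W$ is only known to be $C^{2,\beta}_{loc}(\overline\Omega)$, one must check that $\partial_x u_W$ is itself given by the Poisson integral of its trace (which holds because $\partial_x u_W$ is bounded harmonic with the correct boundary values, by Theorem \ref{thm-Stein} and uniqueness of bounded harmonic extensions), and one must quantify the decay of $\nabla\partial_x u_W$ as indicated above. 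Once these decay and identification facts are in hand, both parts of the lemma follow from Green's formula together with the functional-analytic framework of Theorems \ref{th::3} and \ref{th::4} and a routine density argument.
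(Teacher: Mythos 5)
Your treatment of claim (2) is correct and is essentially the paper's argument in transposed form: the paper tests the weak form of the nonlinear problem $0=\int_\Omega\nabla u_W\cdot\nabla w+\int_{\partial\Omega}wW'(\phi)$ with $w=v_x$, $v\in C^\infty_c(\overline\Omega)$, and integrates by parts in $x$, whereas you differentiate the strong form (legitimate since $u_W\in\mathcal C^{2,\beta}_{\mbox{loc}}(\overline\Omega)$) and test with $v$; both finish by density of $C^\infty_c(\overline\Omega)$ in $H$ and boundedness of $W''(\phi)$, and since $v$ has compact support there is no issue at infinity. The identification $\partial_x u_W=P*_x\phi'$ that you flag is also fine.

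The genuine gap is in claim (1), in the region near the boundary. Away from $\partial\Omega$ your argument (interior gradient estimates for the harmonic function $\bar u$ on unit balls, combined with \eqref{gradient-bound}) is exactly the paper's Step 2 and works. But in the strip $\{0<y<R\}$ you assert $\abs{\nabla\bar u(x,y)}\le C/(1+\abs{(x,y)}^2)$ ``using the $\mathcal C^{2,\beta}_{\mbox{loc}}(\overline\Omega)$ regularity of $u_W$ plus the decay'': local regularity carries no uniformity in $x$, so by itself it gives no decay whatsoever of the second derivatives of $u_W$ up to the boundary, and \eqref{gradient-bound} only controls $\bar u$ itself (and only like $1/(1+\abs{x})$). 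To make your route rigorous you would need uniform-in-$x$ boundary estimates for the linearized Neumann problem $\Delta\bar u=0$, $\partial_\nu\bar u=-W''(\phi)\phi'$ on unit half-balls $B_1^+(x_0,0)$ — e.g. boundary Schauder or $W^{2,p}$ estimates — together with the decay of both $\norm{\bar u}_{L^\infty(B_1^+(x_0,0))}$ and the H\"older norm of $W''(\phi)\phi'$ there; none of this is carried out, and it is precisely the crux of the lemma. The paper avoids pointwise decay of $\nabla\bar u$ altogether: following Lemma 2.3 of \cite{Cabre-SolaMorales:layer-solutions} it introduces $v(x,y)=\int_0^y\bar u(x,t)\,dt$, which solves $\Delta v=W''(\phi)\phi'$ with zero Dirichlet data, notes that $v\in L^2(\R\times[0,R])$ by \eqref{gradient-bound} and that the right-hand side is in $L^2$ by \eqref{eq::5}, and invokes $W^{2,2}$ boundary estimates (Gilbarg--Trudinger, Lemma 9.12) to get $\bar u\in W^{1,2}(\R\times[0,R])$. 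Either you should adopt this device, or supply the uniform boundary Schauder/$W^{2,p}$ argument you are implicitly relying on; as written, the near-boundary square-integrability of $\nabla\bar u$ is asserted, not proved. (Your ``cleaner alternative'' via $\phi'\in H^{1/2}(\R)$ does not help: the $\dot H^{1/2}$ seminorm of $\phi'$ \emph{is} the Dirichlet energy of $\bar u$, so it presupposes the very bound in question.)
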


\begin{proof}
We recall estimate \eqref{eq::5}. It shows that $\phi'\in L^2(\mathbb R)$. To complete the proof of the first claim we need to check that $\int_{\Omega} \abs{\nabla \bar u}^2<\infty$.\\
\noindent \emph{Step 1: $\int_{\left\{0<y<R\right\}}|\nabla \bar u|^2 < +\infty$.}\\
We proceed as in the proof of Lemma 2.3 in \cite{Cabre-SolaMorales:layer-solutions}. We first set
$$v(x,y)=\int_0^y \bar u(x,t)dt.$$
We can see that $v$ is a (smooth) solution of
$$\left\{\begin{array}{ll}
\Delta v=W''(\phi(x))\phi'(x) &\quad \mbox{in } \Omega, \\
v(x,0)=0 & \quad\mbox{on }\partial\Omega.
\end{array}\right.$$
We know (\ref{gradient-bound}) which implies that $v\in L^2\left(\R\times [0,R]\right)$.
Because the right hand side of the equation is bounded in $L^2(\mathbb R\times [0,R])$, and here we are using again the precise decay of $\phi'$, then we get also that $v\in W^{2,2}_{loc}$ using the boundary gradient estimates for harmonic functions in \cite{Gilbarg-Trudinger}, Lemma 9.12. As a consequence, $\bar u\in W^{1,2}(\R\times [0,R])$.\\

\noindent \emph{Step 2: $\int_{\left\{y>R\right\}}|\nabla \bar u|^2 < +\infty$.}\\
Far away from the boundary of $\Omega$, we can use standard interior gradient estimates (Lemma \ref{local-grad-estimates}) for the harmonic function $\bar u$, say, at a ball $B_{1}(x,y)\subset \Omega$ so that
$$\abs{\nabla \bar u(x,y)}\leq C \norm{\bar u}_{L^\infty (B_{1}(x,y))}\le
\frac{C'}{1+(|(x,y)|-1)^2}$$
where we have used (\ref{gradient-bound}) in the last inequality.\\
\noindent \emph{Step 3: Conclusion.}\\
From Steps 1 and 2, we get that $\bar u\in H$.\\
\noindent \emph{Step 4: Proof of (\ref{eq::907}).}\\
From (\ref{Cabre-SolaMorales-layer}) with $f=-W'$, we get for any $w\in C^\infty_c(\overline{\Omega})$:
$$0=\int_{\Omega}\nabla  u\cdot\nabla w +\int_{\partial \Omega} wW'(u)$$
Setting $w=v_x$ with $v\in C^\infty_c(\overline{\Omega})$, we get by integration by parts
$$B(\bar u,v)=0$$
Finally, we conclude by density of $C^\infty_c(\overline{\Omega})$ in $H$.
\end{proof}
\qed

\subsection{Comparison principles for smooth enough solutions}

Next, we consider some comparison results for half-Laplacian equations:

\begin{lem}[{\bf Strong comparison principle}, Lemma 2.8 in \cite{Cabre-SolaMorales:layer-solutions}]\label{lema-comparison}
Let $v$ be a bounded function in $\Omega$,  $v\in\mathcal C^2(\Omega)$ and $\mathcal C^1$ up to the boundary $\partial\Omega$,  satisfying
\begin{equation}\label{equation-max-principle}
\left\{\begin{split}
&-\Delta v \geq 0 \quad &\mbox{ in } \Omega,\\
&\frac{\partial v}{\partial \nu} +d(x)v\geq 0 &\mbox{ on }\partial\Omega,
\end{split}\right.\end{equation}
for some bounded function  $d$ satisfying $d(x)\geq \tau$ for all $x\in \mathbb R$ and some $\tau>0$. Then $v>0$ in $\overline{\Omega}$ unless $v\equiv 0$.
\end{lem}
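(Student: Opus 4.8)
The plan is to combine the classical strong minimum principle for superharmonic functions in the interior of $\Omega=\R^2_+$ with Hopf's boundary lemma, exploiting the strict sign $d\ge\tau>0$ to turn the oblique inequality in \eqref{equation-max-principle} into a contradiction at a boundary minimum. I would split the argument into two steps: first prove the weak statement that $v\ge 0$ on $\oO$, then upgrade it to $v>0$.

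\noindent\emph{Step 1 (sign).} Suppose $-m:=\inf_{\oO}v<0$, which is finite since $v$ is bounded. I would first push the analysis to the boundary: writing $g:=v(\cdot,0)$, the function $v-\inf_{\R}g$ is bounded, superharmonic on $\R^2_+$, and $\ge 0$ on $\partial\Omega$, so the minimum principle for bounded superharmonic functions on a half-plane (a Phragm\'en--Lindel\"of argument, with a barrier such as $\varepsilon\sqrt{r}\,\cos\frac{\theta-\pi/2}{2}$ in polar coordinates) gives $v\ge\inf_{\R}g$ on $\oO$, hence $\inf_{\oO}v=\inf_{\R}g=-m$. Next choose $x_n\in\R$ with $g(x_n)\to -m$ and slide: the translates $v_n(x,y):=v(x+x_n,y)$ are uniformly bounded, superharmonic, and satisfy \eqref{equation-max-principle} with $d$ replaced by $d_n(x):=d(x+x_n)\ge\tau$. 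By interior and boundary elliptic estimates a subsequence converges in $C^2_{\mathrm{loc}}(\Omega)\cap C^1_{\mathrm{loc}}(\oO)$ to some $v_\infty$ with $v_\infty(0,0)=-m=\inf_{\oO}v_\infty$, so $v_\infty$ attains its global minimum at the boundary point $(0,0)$. Since $v_\infty(0,y)\ge v_\infty(0,0)$ for $y\ge 0$ we get $\partial_y v_\infty(0,0)\ge 0$, i.e. $\frac{\partial v_\infty}{\partial\nu}(0,0)\le 0$, while the limiting boundary inequality forces $\frac{\partial v_\infty}{\partial\nu}(0,0)\ge -d_\infty(0)\,v_\infty(0,0)\ge\tau m>0$ --- a contradiction. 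Hence $v\ge 0$ on $\oO$.

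\noindent\emph{Step 2 (strictness).} Assume $v\ge 0$ and $v\not\equiv 0$. If $v$ vanished at an interior point, the strong minimum principle for the superharmonic function $v$ would force $v\equiv 0$ on the connected set $\Omega$, contradicting $v\not\equiv 0$; thus $v>0$ in $\Omega$, and any zero of $v$ on $\oO$ must lie on $\partial\Omega$. If $v(x_0,0)=0$, I would apply Hopf's lemma in an interior ball tangent to $\partial\Omega$ at $(x_0,0)$ to the superharmonic function $v>0$: this gives $\frac{\partial v}{\partial\nu}(x_0,0)<0$, which contradicts $\frac{\partial v}{\partial\nu}(x_0,0)+d(x_0)v(x_0,0)=\frac{\partial v}{\partial\nu}(x_0,0)\ge 0$. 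So $v$ has no zero on $\oO$, i.e. $v>0$ in $\oO$.

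\noindent\emph{Main obstacle.} The genuinely delicate point is Step~1: $\Omega$ is unbounded in both variables, so $\inf_{\oO}v$ need not be attained; it is precisely the reduction to the boundary (the half-plane minimum principle, together with the $L^\infty$-bound for the Poisson extension, Theorem~\ref{thm-Stein}) and the translation--compactness argument that make the elementary sign computation usable. A secondary nuisance is the regularity of $d$: if $d$ is merely bounded measurable, the translates $d_n$ converge only weakly-$*$, so one should pass to the limit in the boundary condition through the weak formulation \eqref{weak-super-solution}, testing against a fixed nonnegative $\varphi\in H$ supported near the limiting minimum point, where $v_\infty<-m/2<0$; this yields a strict reverse inequality against \eqref{weak-super-solution} and hence the same contradiction. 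For the smooth solutions actually used elsewhere in the paper these measurability subtleties disappear, and the interior strong minimum principle and Hopf's lemma are entirely classical once the geometry has been localised to a tangent half-ball.
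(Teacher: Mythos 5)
First, a remark on scope: the paper never proves this lemma — it is quoted verbatim as Lemma 2.8 of \cite{Cabre-SolaMorales:layer-solutions} — so what you have written is a proof of a cited external result rather than an alternative to an argument in the paper. Your Step 2 (strong minimum principle in the interior, then Hopf's lemma at a tangent ball combined with the boundary inequality and $d\ge\tau>0$) is correct as stated, and your overall strategy for Step 1 — force a negative minimum onto $\partial\Omega$ and contradict $\frac{\partial v}{\partial\nu}+d\,v\ge 0$ with the sign of the normal derivative there — is the right idea.

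The genuine gap is the compactness claim in Step 1. You invoke ``interior and boundary elliptic estimates'' to extract from the translates $v_n(x,y)=v(x+x_n,y)$ a limit in $C^2_{\mathrm{loc}}(\Omega)\cap C^1_{\mathrm{loc}}(\overline{\Omega})$ attaining the infimum $-m$ at $(0,0)$. But $v$ only satisfies the differential \emph{inequalities} \eqref{equation-max-principle}: $-\Delta v\ge 0$ gives no upper control on $-\Delta v$, and the boundary condition is an inequality, so there are no a priori interior $C^2$ or boundary $C^1$ estimates uniform in $n$ — the hypotheses ``$v\in\mathcal C^2(\Omega)\cap \mathcal C^1(\overline{\Omega})$, bounded'' give qualitative regularity of the fixed function $v$, not uniform moduli along a sequence $x_n\to\pm\infty$. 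Uniformly bounded supersolutions are precompact only in weak topologies (e.g.\ $L^1_{\mathrm{loc}}$), and in such a limit you can neither guarantee $v_\infty(0,0)=-m$ (downward near-minima may concentrate and disappear in the limit) nor make pointwise sense of $\partial_y v_\infty(0,0)$; your fallback through the weak formulation does not repair this, since it still presupposes convergence strong enough near the boundary and a limit that is genuinely below $-m/2$ there. The standard way to close this (and essentially what the quoted source does) avoids compactness altogether: perturb by a slowly growing barrier compatible with the boundary condition, e.g.\ $\Phi(x,y)=C+\log|(x,y)+(0,1)|$ with $C\ge 1/\tau$, which is harmonic, $\ge C>0$, tends to $+\infty$, and satisfies $\frac{\partial\Phi}{\partial\nu}+d\,\Phi\ge -1+\tau C\ge 0$ on $\partial\Omega$. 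If $\inf v<0$, then for $\varepsilon$ small $v+\varepsilon\Phi$ still has negative infimum, which is now attained at a finite point; an interior minimum is excluded by the strong minimum principle (the function is superharmonic and nonconstant, since it blows up at infinity), and at a boundary minimum $P$ one has $\frac{\partial}{\partial\nu}(v+\varepsilon\Phi)(P)\le 0$ while the boundary inequality forces it to be $\ge -d(P)(v+\varepsilon\Phi)(P)\ge\tau\,|(v+\varepsilon\Phi)(P)|>0$, a contradiction. This yields $v\ge 0$ directly, after which your Step 2 applies unchanged.
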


We will also need the following variation:

\begin{cor}\label{cor-comparison}
Let $v$ be a bounded function in $\Omega$, $v\in\mathcal C^2(\Omega)$ and $\mathcal C^1$ up to the boundary $\partial\Omega$, satisfying
\begin{equation*}\left\{\begin{split}
&\Delta v = 0 \quad &\mbox{ in } \Omega,\\
&\frac{\partial v}{\partial \nu} +d(x)v=M(x) &\mbox{ on }\partial\Omega,
\end{split}\right.\end{equation*}
where $d$ is a bounded function satisfying $d(x)\geq \tau$ for all $x\in \mathbb R$ and some $\tau>0$. Assume, in addition, that the right hand side $M$ can be estimated by
$$\abs{M(x)}\leq \frac{C}{1+\abs{x}^2},\quad x\in\mathbb R$$
for some constant $C\geq 0$.
Then there exists a non-negative constant $\tilde C$ depending on $C,\tau$ such that
$$\abs{v(x,0)}\leq \frac{\tilde C}{1+\abs{x}^2}$$
 for all $x\in\mathbb R$.
\end{cor}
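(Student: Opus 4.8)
The plan is to dominate $v$ by an explicit decaying barrier built from the model layer solution and then to conclude with the strong comparison principle of Lemma \ref{lema-comparison}. Write $|M(x)|\le C/(1+x^2)$ and recall from \eqref{model-layer}--\eqref{problem-model} that $u^a(x,y)=\frac1\pi\arctan\frac{x}{y+a}+\frac12$ is harmonic in $\Omega$ and solves the boundary reaction problem \eqref{problem-model}. Differentiating in $x$, the function $w_a:=u^a_x$ is positive, bounded ($w_a\le 1/(\pi a)$), harmonic in $\Omega$ and smooth up to $\partial\Omega$; its boundary trace is $w_a(x,0)=\frac{a}{\pi(x^2+a^2)}$, and rewriting \eqref{computation-layer} it satisfies on $\partial\Omega$ the one-sided Robin inequality $\frac{\partial w_a}{\partial\nu}+\frac1a w_a\ge 0$.

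First I would fix $a:=\max\{1,2/\tau\}$, so that $d(x)-\frac1a\ge \tau-\frac1a\ge \frac\tau2>0$ on $\mathbb R$. Since $w_a>0$, on $\partial\Omega$ this gives $\frac{\partial w_a}{\partial\nu}+d(x)w_a\ge(d(x)-\frac1a)w_a\ge\frac{\tau}{2}\cdot\frac{a}{\pi(x^2+a^2)}$, and using $a\ge1$ (so that $x^2+a^2\le a^2(1+x^2)$) we obtain $\frac{\partial w_a}{\partial\nu}+d(x)w_a\ge\frac{\tau}{2\pi a}\cdot\frac{1}{1+x^2}$ on $\partial\Omega$. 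Setting $K:=\frac{2\pi aC}{\tau}$ and $w:=Kw_a$, the function $w$ is then bounded, harmonic, $C^1$ up to $\partial\Omega$, it satisfies $\frac{\partial w}{\partial\nu}+d(x)w\ge\frac{C}{1+x^2}\ge|M(x)|$ on $\partial\Omega$, and $w(x,0)=\frac{Ka}{\pi(x^2+a^2)}\le\frac{Ka/\pi}{1+x^2}=:\frac{\tilde C}{1+x^2}$, where $\tilde C=\frac{2a^2C}{\tau}$ depends only on $C$ and $\tau$ (because $a$ depends only on $\tau$).

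Next I would apply Lemma \ref{lema-comparison} to the two bounded functions $w-v$ and $w+v$, both harmonic in $\Omega$ and $C^1$ up to $\partial\Omega$. On $\partial\Omega$ we have $\frac{\partial(w-v)}{\partial\nu}+d(x)(w-v)=(\frac{\partial w}{\partial\nu}+d(x)w)-M(x)\ge|M(x)|-M(x)\ge0$, and similarly $\frac{\partial(w+v)}{\partial\nu}+d(x)(w+v)=(\frac{\partial w}{\partial\nu}+d(x)w)+M(x)\ge|M(x)|+M(x)\ge0$, while $d(x)\ge\tau>0$. Lemma \ref{lema-comparison} then yields $w-v\ge0$ and $w+v\ge0$ on $\overline{\Omega}$, i.e.\ $|v|\le w$ everywhere on $\overline{\Omega}$. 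Restricting to $y=0$ gives $|v(x,0)|\le w(x,0)\le\tilde C/(1+x^2)$, which is the claim.

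The only point that really needs care is the calibration of the barrier: it must dominate $|M|$ uniformly in $x$, both near $x=0$ and as $|x|\to\infty$, which is what forces the two choices $a\ge 1$ (to bound $x^2+a^2$ by $a^2(1+x^2)$) and then $K$ large. Apart from that the argument is routine, since the comparison tool (Lemma \ref{lema-comparison}) is already available and the model layer solution $u^a$ is completely explicit; in particular there is no genuine analytic obstacle here, the corollary being essentially a quantitative reading of the maximum principle against a well-chosen test function.
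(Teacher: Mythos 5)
Your proof is correct and follows essentially the same route as the paper: you build the barrier from the explicit model layer solution, use the Robin-type inequality \eqref{computation-layer} for $u^a_x$ together with $d\ge\tau$, and apply Lemma \ref{lema-comparison} to the barrier minus $\pm v$. The only difference is cosmetic bookkeeping (choosing $a=\max\{1,2/\tau\}$ and making the constants explicit, where the paper takes $a=2/\tau$ and an implicit $K$).
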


\begin{proof}
Fix $a:=2/\tau$, and consider the model layer solution $u^a$ given in \eqref{model-layer}, with trace $\phi^a$.  Because of hypothesis on $M$ and  the decay of $(\phi^a)_x$ from \eqref{derivative-bound}, we can find a positive constant $K$ such that $\abs{M(x)}\leq K \phi^{a}_x(x)$ for all $x\in\mathbb R$. Next, compute
$$\frac{\partial v}{\partial \nu}+d(x) v=M(x)\leq K u^{a}_x\leq K\left[\frac{2}{\tau}\frac{\partial u_x^{a}}{\partial\nu}+2u_x^{a}\right]\leq
\frac{2K}{\tau} \left[\frac{\partial u_x^{a}}{\partial\nu}+d(x) u_x^{a}\right]\quad\mbox{along }\partial\Omega,$$
where we have used \eqref{computation-layer} in the second inequality.
Now we apply Lemma \ref{lema-comparison} to the difference $\left[\frac{2K}{\tau}u_x^{a}-v\right]$ to get $v\le \frac{2K}{\tau}u_x^{a}$ in $\bar\Omega$. The same argument, applied to $-v$ gives that $\abs{v}\le \frac{2K}{\tau}u_x^{a}$. Taking into account the estimate \eqref{derivative-bound}, we achieve the conclusion of the corollary.
\end{proof}

\qed

\subsection{Further properties of weak solutions}

The aim of this subsection is to extend the classical results for harmonic functions to the boundary $\partial\Omega$, in order to obtain regularity and maximum principles for weak solutions (in the sense of definition \ref{defi-weak-solution}). We set
$$u^-:=-\inf \{u,0\},\quad u^+:=\sup\{u,0\}.$$

Our first result is a weak maximum principle for non-smooth solutions:

\begin{pro}\label{prop-weak-comparison}{\bf (Weak maximum principle).}
Let $u\in H$ be a weak supersolution of \eqref{weak-equation} with $g\equiv 0$, and let $d(x)$ be a measurable bounded function. Set $u=P*_x w$. Assume that $u\geq 0$ in $\overline{B_R^+}$, and that $d>0$ on $\mathbb R\backslash[-R,R]$. Then
$$u\geq 0 \quad\mbox{in }\mathbb R^2_+,$$
and also, $Tu\geq 0$ on $\mathbb R$.
\end{pro}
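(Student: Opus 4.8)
The plan is to use the variational characterization of weak supersolutions directly, testing against the negative part of $u$ restricted away from the region where positivity is already known. Since $u=P*_x w$ is harmonic in $\Omega$ and continuous up to $\overline\Omega$ (by Theorem \ref{thm-Stein} and interior estimates), the function $u^- = -\inf\{u,0\}$ is Lipschitz in $u$, hence $u^-\in H^1_{\loc}(\overline\Omega)$; moreover, since $u\in H$, one checks $u^-\in H$ as well (truncation is bounded on $H$ because $|\nabla u^-|\le |\nabla u|$ a.e. and $|Tu^-|\le|Tu|$). The hypothesis $u\ge 0$ on $\overline{B_R^+}$ forces $u^-$ to be supported in $\Omega\setminus \overline{B_R^+}$, and in particular $Tu^-$ is supported in $\mathbb R\setminus[-R,R]$, which is exactly the set where $d>0$.

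First I would plug $v=u^-\in H$ into the weak supersolution inequality \eqref{weak-super-solution} with $g\equiv 0$:
$$\int_{\Omega}\nabla u\cdot\nabla u^- + \int_{\partial\Omega} d(x)\,(Tu)(Tu^-)\ \ge\ 0.$$
Now use the pointwise identities $\nabla u\cdot\nabla u^- = -|\nabla u^-|^2$ a.e. (since $\nabla u^- = -\nabla u\,\mathbf 1_{\{u<0\}}$) and $(Tu)(Tu^-) = -(Tu^-)^2$ a.e. on $\partial\Omega$. This turns the inequality into
$$-\int_{\Omega}|\nabla u^-|^2 - \int_{\partial\Omega} d(x)\,(Tu^-)^2 \ \ge\ 0,$$
i.e.
$$\int_{\Omega}|\nabla u^-|^2 + \int_{\partial\Omega} d(x)\,(Tu^-)^2 \ \le\ 0.$$
On the support of $Tu^-$ we have $d>0$, so both terms on the left are $\ge 0$; hence both vanish. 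From $\int_{\Omega}|\nabla u^-|^2=0$ and $u^-\in H$ (which controls the $L^2$ norm of the trace, and via the trace inequality \eqref{trace-inequality} together with harmonicity-type bounds pins down the function) we conclude $u^-\equiv 0$, that is $u\ge 0$ in $\mathbb R^2_+$, and consequently $Tu=w\ge 0$ on $\mathbb R$.

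The step I expect to require the most care is the justification that $u^-$ is an admissible test function in $H$ and that the chain-rule/truncation identities $\nabla u\cdot\nabla u^- = -|\nabla u^-|^2$ and the boundary product identity hold in the weak (a.e.) sense needed here — this is where one must invoke that $u$ is harmonic and smooth up to $\partial\Omega$ on the relevant region, plus a density/approximation argument (truncate $u$ by $\min\{u,\epsilon\}$ or use Stampacchia's theorem on $H^1$). A secondary subtlety is concluding $u^-\equiv 0$ from $\int_\Omega|\nabla u^-|^2 = 0$: one cannot immediately say $u^-$ is constant on all of $\Omega$ without a connectedness/Poincaré argument, but since $u^-$ vanishes on the open set $B_R^+$ and $\Omega$ is connected, $\nabla u^-\equiv 0$ forces $u^-\equiv 0$. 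Everything else is the routine manipulation of the bilinear form $B$.
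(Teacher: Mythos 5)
Your proposal is correct and is essentially the paper's own argument: the paper likewise tests the weak supersolution inequality with $u^-\in H$, uses that $\mathrm{supp}(u^-)\subset\overline{\mathbb R^2_+}\setminus B_R^+$ and $\mathrm{supp}((Tu)^-)\subset\mathbb R\setminus(-R,R)$ where $d>0$, and obtains $0\le-\int|\nabla u^-|^2-\int d\,(Tu^-)^2$, forcing $u^-\equiv 0$. Your extra care about the admissibility of $u^-$ and the truncation identities only fills in details the paper leaves to the reader.
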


\begin{proof}
This is a small variation of the classical proof for the Laplacian, that can be found in Gilbarg-Trudinger \cite{Gilbarg-Trudinger}, Theorem 8.1. Note that $(Tu)^-= T(u^-)$ a.e. in $\mathbb R$.
Set
$$\Omega^-:=supp (u^-)\quad\mbox{and}\quad\Theta:=supp((Tu)^-)\subset \mathbb R.$$
We know that $\Omega^-$ is contained in $\mathbb R^2_+\backslash B_R^+$ and $\Theta\subset \mathbb R\backslash(-R,R)$.  Use $u^-\in H$ as an admissible test function in \eqref{weak-super-solution}; then
$$0\leq -\int_{\Omega^-} \abs{\nabla u}^2-\int_{\Theta}d(x)u^2.$$
The proof is easily completed.
\end{proof}
\qed

Note that a function $u\in H$ must have trace $Tu$ belonging to $L^p(\mathbb R)$ for all $p>1$. However, it may not belong to $L^\infty(\mathbb R)$. In the next result, we use a Moser iteration scheme to show that weak solutions are actually bounded.

\begin{pro}\label{prop-moser-iteration}{\bf ($L^\infty$ bound for $g=0$).}
Let $w\in H^{\half}(\mathbb R)$ be a weak subsolution of \eqref{weak-equation} with $g\equiv 0$ for some uniformly bounded  function $d$ defined on $\mathbb R$.
Then
$$\sup_{\mathbb R} w\leq C \norm{w^+}_{L^2(\mathbb R)}.$$
If $w$ is a supersolution, the conclusion reads
$$\sup_{\mathbb R} \{-w\}\leq C \norm{w^-}_{L^2(\mathbb R)}.$$
The constant does not depend on $w$.
As a corollary, $u:=P*_x w$ is also uniformly bounded on $\Omega$.
\end{pro}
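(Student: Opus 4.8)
The plan is to run a Moser iteration directly on the harmonic extension, testing the weak formulation against truncated powers of $u^+$. It suffices to treat the subsolution case: if $w$ is a weak supersolution of $Lw-d(x)w=0$, then $-w$ is a weak subsolution of the same equation (with $(-w)^+=w^-$), so the supersolution statement follows by applying the subsolution estimate to $-w$. Throughout I would use the preliminary fact that $w\in H^{\half}(\mathbb R)\hookrightarrow L^p(\mathbb R)$ for every finite $p>1$ (Theorem \ref{Sobolev}); hence all the $L^p(\mathbb R)$ norms below are a priori finite, and the iteration only serves to make the resulting bound uniform in $p$. Set $u:=P*_x w\in H$.

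First I would fix $N>0$, set $\bar u_N:=\min\{u^+,N\}\in H$ (so that $\nabla\bar u_N=1_{\{0<u<N\}}\nabla u$ a.e.), and for $\beta\geq 1$ plug the nonnegative test function $v_N:=\bar u_N^{\,2\beta-1}\in H$ into the weak subsolution inequality $\int_\Omega\nabla u\cdot\nabla v_N+\int_{\partial\Omega}d\,(Tu)(Tv_N)\leq 0$. The gradient term produces the Caccioppoli factor $\nabla u\cdot\nabla v_N=\frac{2\beta-1}{\beta^2}|\nabla(\bar u_N^{\,\beta})|^2\geq\frac1\beta|\nabla(\bar u_N^{\,\beta})|^2$ (valid for $\beta\geq 1$), while the boundary term is bounded by $\norme{d}_{L^\infty}\int_{\{Tu>0\}}Tu\,(T\bar u_N)^{2\beta-1}$, which after splitting over $\{0<Tu\leq N\}$ and $\{Tu>N\}$ and using the a priori $L^{2\beta}$ bound on $w$ to absorb the tail is $\leq 2\norme{d}_{L^\infty}\norm{w^+}_{L^{2\beta}(\mathbb R)}^{2\beta}$, \emph{uniformly in} $N$. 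Adding $\int_{\partial\Omega}(T\bar u_N^{\,\beta})^2=\norm{T\bar u_N}_{L^{2\beta}(\mathbb R)}^{2\beta}$ to reconstruct the full $H$-norm on the left, this gives
\be\label{plan-caccioppoli}
\norm{\bar u_N^{\,\beta}}_H^2\;\leq\;C\,(1+\norme{d}_{L^\infty})\,\beta\,\norm{w^+}_{L^{2\beta}(\mathbb R)}^{2\beta}.
\ee
Fixing once and for all an exponent $q>2$ (say $q=4$), the trace inequality $\norm{Tv}_{H^{\half}(\mathbb R)}^2\leq 2\pi\norm{v}_H^2$ (Theorem \ref{th::3}) followed by the embedding $H^{\half}(\mathbb R)\hookrightarrow L^q(\mathbb R)$ (Theorem \ref{Sobolev}) turns \eqref{plan-caccioppoli} into $\norm{T\bar u_N}_{L^{q\beta}(\mathbb R)}^{2\beta}\leq C'\beta\,\norm{w^+}_{L^{2\beta}(\mathbb R)}^{2\beta}$; letting $N\to\infty$ by monotone convergence yields
\be\label{plan-iteration}
\norm{w^+}_{L^{q\beta}(\mathbb R)}\;\leq\;(C'\beta)^{1/(2\beta)}\,\norm{w^+}_{L^{2\beta}(\mathbb R)}.
\ee

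Then I would iterate \eqref{plan-iteration} along $\beta_0=1$, $\beta_{k+1}=(q/2)\beta_k=2^k$, obtaining $\norm{w^+}_{L^{2^{k+2}}(\mathbb R)}\leq\prod_{j=0}^{k}(C'2^j)^{1/2^{j+1}}\,\norm{w^+}_{L^2(\mathbb R)}$; the infinite product converges because $\sum_j 2^{-(j+1)}(\log C'+j\log 2)<\infty$, and passing to $k\to\infty$ (using $\norm{f}_{L^\infty}=\lim_{p\to\infty}\norm{f}_{L^p}$, which is legitimate since $w^+\in L^2$) gives $\sup_{\mathbb R}w\leq C\,\norm{w^+}_{L^2(\mathbb R)}$ with $C=C(\norme{d}_{L^\infty})$. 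For the corollary, once $w\in L^\infty(\mathbb R)$ the boundedness of $u=P*_x w$ on $\overline{\mathbb R^2_+}$ is immediate from the $L^\infty$ case of the Poisson-integral bound (Theorem \ref{thm-Stein}).

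I expect the main obstacle to be structural rather than computational: the operator is ``critical'', since $H^{\half}(\mathbb R)$ embeds into every $L^q$ but not into $L^\infty$, so there is no single Sobolev exponent on which to base the iteration; the gain must instead be produced by fixing any $q>2$ and letting the power $\beta$ in the test function run to infinity, which is exactly what keeps $\prod_k(C'\beta_k)^{1/(2\beta_k)}$ finite. The two genuinely delicate points along the way are (i) the chain-rule justification that the truncated powers $\bar u_N^{\,\beta}$ and $\bar u_N^{\,2\beta-1}$ belong to $H$ with the expected gradients, and (ii) the uniform-in-$N$ control of the tail term coming from $\{Tu>N\}$ in the boundary integral --- this is precisely where the a priori membership $w\in L^p(\mathbb R)$ for all finite $p$ is needed, before sending $N\to\infty$.
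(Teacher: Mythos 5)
Your proposal is correct and follows essentially the same route as the paper: a Moser iteration testing the weak (sub)solution inequality against truncated powers of $u^+$, using the trace inequality of Theorem \ref{th::3} together with the embedding $H^{\half}(\mathbb R)\hookrightarrow L^q(\mathbb R)$ to produce the gain, iterating powers geometrically, and passing to the limit in the exponent and in the truncation. The only differences (fixing $q=4$ and doubling exponents versus the paper's $\gamma_m=p^m$, reducing the supersolution case to $-w$ rather than rerunning the argument with $u^-$, and taking $\norm{w^+}_{L^p}\to\norm{w^+}_{L^\infty}$ directly instead of via averages on $(-R,R)$) are cosmetic.
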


\begin{proof} This is the classical proof by Moser for the Laplacian case. We follow closely the arguments from Theorems 8.15 and 8.18 in Gilbarg-Trudinger \cite{Gilbarg-Trudinger}, so we will not give every single detail here.
Assume that $w$ is a subsolution, and set $u:=P*_x w$. Given $\sigma\ge 1$, we insert into \eqref{weak-sub-solution}, the test function
$$v=(u^+)^\sigma.$$
Note that $Tv=(Tu)^\sigma$ a.e.. Here we point out that this $v$ might not belong to $H$; in order to make the argument rigorous, a truncated version must be used instead in the definition of $v$, say replacing $u^+$ by
\begin{equation}\label{eq::902}
\min\{u^+,n\},\quad \mbox{for}\quad n\in \mathbb N.
\end{equation}
We obtain
$$\sigma\int_{\Omega} u^{\sigma-1}\nabla u\cdot\nabla u^+\leq \int_{\partial\Omega} -d(x) (u^+)^{\sigma}u.$$
Next, we set $u_1:=(u^+)^\gamma$ and $w_1:=Tu_1$ for $\gamma=\frac{\sigma+1}{2}$, and use that $|d(x)|\le C_0$. Thus we get
\be\label{moser-1}\int_{\Omega} \abs{\nabla u_1}^2\leq C_1(\gamma)\int_{\partial\Omega} w_1^2
\quad \mbox{with}\quad C_1(\gamma)=\frac{C_0\gamma^2}{2\gamma-1}.\ee

For the left hand side, we proceed as follows.
We use the trace inequality \eqref{trace-inequality} to estimate
$$\frac{1}{2\pi}\norm{w_1}^2_{\dot H^{\half}(\mathbb R)} \le \int_{\Omega} \abs{\nabla  u_1}^2.$$
But, on the other hand, by Theorem \ref{Sobolev}, for every $p>1$, there exists a constant $C_p>0$ such that
$$\begin{array}{ll}
||w_1||^2_{L^{2p}}&\le C_p\left(||w_1||^2_{L^2}+||w_1||^2_{\dot{H}^{\frac12}}\right)\\
\\
&\le C_2(\gamma) ||w_1||^2_{L^2} \quad \mbox{with}\quad C_2(\gamma)=C_p(1+2\pi C_1(\gamma)),
\end{array}$$
where we have used \eqref{moser-1} in the last line.\\
If we denote
$$\Psi (q):=\lp\int_{\partial\Omega} \abs{w^+}^{q}\rp^{\frac{1}{q}},$$
then we have shown that
$$\Psi(2p\gamma)\leq \left[C_2(\gamma)\right]^{\frac{1}{2\gamma}}\Psi(2\gamma).$$
Fixed $p>1$ and choose $\gamma_m=p^m$ for $m\in\N$; by iteration we know
\be\label{iteration-step}\Psi(2\gamma_m)\leq C_m\Psi(2),\quad \forall m\in\mathbb N\backslash\left\{0\right\},\quad C_m:=\prod_{j=0}^{m-1}  \left[C_2(\gamma_j)\right]^\frac{1}{2\gamma_j}.\ee
It is then easy to check that $C_m\le C_\infty$ uniformly bounded in $m\in\mathbb N$ (which is also shown, for instance, in page 190 of \cite{Gilbarg-Trudinger}).
Remark that
$$\sup_{(-R,R)} \abs{w^+}=\lim_{q\to\infty} \lp\frac{1}{2R} \int_{(-R,R)} \abs{w^+}^q\rp^{\frac{1}{q}}.$$
Letting $m$ to infinity in \eqref{iteration-step}, we have
$$\sup_{(-R,R)} \abs{w^+}\leq C_\infty  \norm{w^+}_{L^{2}(\mathbb R)},$$
for every $R>0$. We get the first part of the conclusion, taking $R\to +\infty$ and then $n\to +\infty$ in (\ref{eq::902}).
The second part is analogous, using $u^-$ instead.
The final assertion is an immediate consequence of Theorem \ref{thm-Stein}.
\end{proof}
\qed

We will use also the following particular case (that is not the optimal one can prove, but it suffices for our purposes):

\begin{cor}\label{cor-Moser}{\bf ($L^\infty$ bound when $g$ is bounded).}
Let $w\in H^{\half}(\mathbb R)$ be a weak solution of \eqref{weak-equation} with non-vanishing right hand side $g$.
If there exists $p_0>1$ such that
\begin{equation}\label{eq::904}
\norm{g}_{L^p(\mathbb R)}\leq c \quad \mbox{for all}\quad p\ge p_0
\end{equation}
(with $c$ independent on $p$), then $w$ is bounded.
\end{cor}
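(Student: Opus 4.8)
The plan is to run the Moser iteration of Proposition~\ref{prop-moser-iteration} essentially unchanged, the only new feature being the extra boundary term produced by the right-hand side $g$. A preliminary remark: since $g\in L^2(\mathbb R)$ (as required in Definition~\ref{defi-weak-solution}) and $\norm{g}_{L^p(\mathbb R)}\le c$ for all $p\ge p_0$, one in fact has $g\in L^\infty(\mathbb R)$ — if not, one could choose $\varepsilon>0$ and a set of positive measure $m$ on which $|g|>c+\varepsilon$, and then $\norm{g}_{L^p}>(c+\varepsilon)\,m^{1/p}\to c+\varepsilon$, contradicting $\norm{g}_{L^p}\le c$ once $p$ is large. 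Consequently $g\in L^q(\mathbb R)$ for every $q\ge 2$ with $c_0:=\sup_{q\ge 2}\norm{g}_{L^q}<\infty$; this uniformity in the exponent is what the iteration will consume, and it is precisely why the hypothesis asks for a bound at every $p\ge p_0$ rather than at a single $q>1$ (the latter being, in fact, the sharp condition, as the authors note).

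Next I would set $u:=P*_x w$, regard $w$ as a weak subsolution of~\eqref{weak-equation}, fix $p>1$, put $\gamma_m=p^m$, and at the $m$-th step insert into~\eqref{weak-sub-solution} the test function $v=(u^+)^{2\gamma_m-1}$ — truncated as in~\eqref{eq::902} so that $v\in H$, and, being paired only against $g\in L^2$, this is meaningful; the truncation is removed at the end. Repeating the computation that produced~\eqref{moser-1}, the sole additional term is $\int_{\partial\Omega}|g|\,(u^+)^{2\gamma_m-1}$, which I would bound by H\"older's inequality with exponents $2\gamma_m$ and $\frac{2\gamma_m}{2\gamma_m-1}$:
$$\int_{\partial\Omega}|g|\,(u^+)^{2\gamma_m-1}\ \le\ \norm{g}_{L^{2\gamma_m}(\mathbb R)}\ \norm{u^+}_{L^{2\gamma_m}(\mathbb R)}^{\,2\gamma_m-1}\ \le\ c_0\,\Psi(2\gamma_m)^{2\gamma_m-1},$$
with $\Psi(q)=\norm{w^+}_{L^q}$. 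Since $\int_{\partial\Omega}\bigl(T((u^+)^{\gamma_m})\bigr)^2=\Psi(2\gamma_m)^{2\gamma_m}$ and trivially $\Psi^{2\gamma_m-1}\le\Psi^{2\gamma_m}+1$, the new term is absorbed and one recovers an inequality of the form~\eqref{moser-1}, hence~\eqref{iteration-step}, with the same uniformly bounded constants $C_m\le C_\infty$. Letting $m\to\infty$ gives $\sup_{\mathbb R}w^+<\infty$. Finally $-w$ is itself a weak solution of~\eqref{weak-equation}, now with right-hand side $-g$, which obeys the same hypothesis, so the identical argument yields $\sup_{\mathbb R}(-w)<\infty$; hence $w\in L^\infty(\mathbb R)$.

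The one point needing care — bookkeeping rather than a genuine obstacle — is that the $g$-term scales as $\Psi(2\gamma_m)^{2\gamma_m-1}$, one power below the principal term $\Psi(2\gamma_m)^{2\gamma_m}$, so the clean recursion $\Psi(2p\gamma)\le[C_2(\gamma)]^{1/2\gamma}\Psi(2\gamma)$ of Proposition~\ref{prop-moser-iteration} is not reproduced verbatim. I would handle this in the standard way: either iterate on the quantities $q\mapsto(\Psi(q)^q+1)^{1/q}$, carrying along the additive constant at each step, or — exactly as for the Laplacian in Gilbarg--Trudinger — localize to $(-R,R)$, replace $w^+$ by $w^++k$ with $k:=\max\{1,\norm{g}_{L^\infty}\}$ so that $(w^++k)^{2\gamma-1}\le(w^++k)^{2\gamma}$, run the iteration there, and let $R\to\infty$ at the very end. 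Either route uses only constants already present in the homogeneous estimate, so the only thing to verify is that this absorption does not spoil the uniform bound on $C_m=\prod_{j}[C_2(\gamma_j)]^{1/2\gamma_j}$, which it does not, for the same reason as in Proposition~\ref{prop-moser-iteration}.
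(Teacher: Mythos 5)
Your proof is correct and follows essentially the same route as the paper: the Moser iteration of Proposition \ref{prop-moser-iteration}, with the $g$-term estimated by H\"older with exponents $2\gamma$ and $\tfrac{2\gamma}{2\gamma-1}$ and absorbed by iterating on $\max\{1,\Psi(q)\}$-type quantities, which is exactly the paper's $\bar\Psi$ device. The only cosmetic differences (working with $u^+$ and then $-w$ instead of $|u|^{\sigma-1}u$, and starting the iteration at exponent $2$ after noting $g\in L^\infty$ rather than at $2\gamma=p_0$) do not change the argument.
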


\begin{proof}
We use Proposition \ref{prop-moser-iteration} with some modifications in order to account for the extra right hand side term involving $g$. By hypothesis, $w$ is a weak solution satisfying for all  $v\in H$,
$$B(u,v)=-\int_{\partial\Omega} gv, \quad\mbox{for }u=P*_x w.$$
Similarly as  before (using if necessary a truncated version), we use the test function $v=\abs{u}^{\sigma-1} u$,
$u_1=|u|^{\gamma-1}u$ and $w_1 = Tu_1$. We use H\"older inequality with exponents $2\gamma$, $\frac{2\gamma}{2\gamma-1}$, which gives
$$\abs{\int_{\partial\Omega} gv}\leq \lp\int_{\partial\Omega} \abs{g}^{2\gamma}\rp^{\frac{1}{2\gamma}} \lp\int_{\partial\Omega} w_1^{2}\rp^{\frac{2\gamma-1}{2\gamma}},$$
we get
\begin{equation*}\begin{split}
\int_{\Omega} \abs{\nabla u_1}^2 &\le C_1(\gamma) \int_{\partial\Omega} w_1^{2} +\tilde{C}_1(\gamma)\lp\int_{\partial\Omega} w_1^{2}\rp^{\frac{2\gamma-1}{2\gamma}}\quad \mbox{with}\quad  \tilde{C}_1(\gamma)= \frac{\gamma^2}{2\gamma-1}\cdot \left(\int_{\partial \Omega} |g|^{2\gamma}\right)^{\frac{1}{2\gamma}}.
\end{split}\end{equation*}
Assuming (\ref{eq::904}) with $2\gamma\ge p_0$, we get
$$\int_{\Omega} \abs{\nabla u_1}^2 \le \bar{C}_1(\gamma) \max\left\{1,\int_{\partial \Omega} w_1^2\right\} \quad \mbox{with}\quad \bar{C}_1(\gamma) = C_1(\gamma) + \frac{c\gamma^2}{2\gamma-1}$$

The rest of the argument follows similarly, with $\bar{C}_2(\gamma)=\max\left\{1,C_p(1+2\pi \bar{C}_1(\gamma))\right\}$, $\bar\Psi(q)=\max \left\{1,\left(\int_{\partial\Omega}|w|^q\right)^{\frac1q}\right\}$, and $\bar \Psi(2p\gamma)\le (\bar C_2(\gamma))^{\frac{1}{2\gamma}}\bar \Psi(2\gamma)$, starting the iteration at $2\gamma=p_0$.\qed\\
\end{proof}

We now quote the regularity theory for bounded weak solutions of \cite{Cabre-SolaMorales:layer-solutions}.

\begin{pro}{\bf (Regularity of weak solutions)}. \label{prop-regularity}
Let $R>0$, $\beta\in(0,1)$, and let $w\in H^{\half}(\mathbb R)$ be a weak solution of \eqref{weak-equation} with $d\in\mathcal C^\beta([-4R,4R])$, $g\in\mathcal C^\beta([-4R,4R])$. If $u=P*_x w \in L^\infty (B^+_{4R})\cap W^{1,2}(B_{4R}^+)$, then $u\in \mathcal C^{1,\beta}(\overline{B_R^+})$ and
$$\norm{u}_{\mathcal C^{1,\beta}(\overline{B_R^+})}\leq C_R,$$
for some constant $C_R$ that depends only on $\beta$, $R$ and on the upper bounds for $\norm{u}_{L^\infty (B^+_{4R})}$, $\norm{d}_{\mathcal C^\beta([-4R,4R])}$ and $\norm{g}_{\mathcal C^\beta([-4R,4R])}$.
\end{pro}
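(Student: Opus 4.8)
The plan is to deduce the boundary regularity from the classical Schauder theory for harmonic functions with Neumann data, after a preliminary H\"older bootstrap of De Giorgi--Nash--Moser type. Since $u$ is harmonic in $\Omega$, interior gradient and Schauder estimates (Theorem \ref{local-grad-estimates} and its higher-order analogues) already give $u\in C^\infty(\Omega)$ with all derivatives controlled by $\|u\|_{L^\infty(B_{4R}^+)}$ away from $\partial\Omega$; so the whole issue is local regularity up to the flat piece $(-4R,4R)\times\{0\}$ of $\partial\Omega$. On that piece the weak formulation \eqref{weak-super-solution}--\eqref{weak-sub-solution}, localized with a cutoff supported in $B_{4R}^+$, says precisely that $u$ is a distributional solution of $\Delta u = 0$ in $B_{4R}^+$ with $\frac{\partial u}{\partial\nu} = d(x)\,Tu - g(x)$ on the boundary segment.

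First I would run one bootstrap step using only bounded Neumann data. Because $u\in L^\infty(B_{4R}^+)$ and $d$ is bounded, the datum $h:=d\,Tu-g$ lies in $L^\infty((-4R,4R))$ with norm controlled by $\|u\|_{L^\infty(B_{4R}^+)}$, $\|d\|_{L^\infty}$ and $\|g\|_{L^\infty}$. Subtracting from $u$ a single-layer potential of $h$ over a slightly smaller segment reduces matters to a function with homogeneous Neumann data, which extends harmonically to a full ball by even reflection across $\{y=0\}$; interior estimates for the reflected function, combined with the fact that a single-layer potential with $L^\infty$ density is $C^{0,\alpha}$ for every $\alpha\in(0,1)$, yield $u\in C^{0,\alpha}(\overline{B_{2R}^+})$ for every $\alpha\in(0,1)$, with the corresponding quantitative bound. (Alternatively one invokes directly a boundary De Giorgi--Nash--Moser estimate for the Neumann problem, in the spirit of \cite{Gilbarg-Trudinger}.)

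Now $Tu\in C^{0,\alpha}([-2R,2R])$ with $\alpha$ chosen in $[\beta,1)$, and since $d,g\in C^\beta([-4R,4R])$ the datum $h=d\,Tu-g$ belongs to $C^\beta([-2R,2R])$ with norm bounded in terms of $\|d\|_{C^\beta}$, $\|g\|_{C^\beta}$ and $\|u\|_{L^\infty(B_{4R}^+)}$. At this point I would again peel off a single-layer potential of $h$, now with $C^\beta$ density (hence $C^{1,\beta}$ up to the boundary), and even-reflect the remainder across $\{y=0\}$ to obtain a harmonic function on a full ball; interior Schauder estimates then give $u\in C^{1,\beta}(\overline{B_R^+})$ together with the stated bound $\|u\|_{C^{1,\beta}(\overline{B_R^+})}\le C_R$, the constant $C_R$ depending only on $\beta$, $R$, $\|u\|_{L^\infty(B_{4R}^+)}$, $\|d\|_{C^\beta([-4R,4R])}$ and $\|g\|_{C^\beta([-4R,4R])}$, the homogeneity being tracked by scaling.

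I expect the main obstacle to be this first bootstrap: obtaining regularity up to the boundary from only $L^\infty$ control on the Neumann datum, and in particular localizing carefully so that the uncontrolled behaviour of $d$, $g$ and $Tu$ outside $[-4R,4R]$ does not enter --- this is where one must use that $u$ itself is bounded on $B_{4R}^+$, rather than any global hypothesis, and where the passage from the variational identity for test functions $v\in H$ to the pointwise Neumann condition on the boundary segment has to be justified. Once $u$ is known to be H\"older up to the boundary, the remaining step is the classical Schauder theory for the Laplacian with a pure Neumann boundary condition. Since this is essentially the scheme carried out in \cite{Cabre-SolaMorales:layer-solutions} (adapted to accommodate the extra $C^\beta$ right-hand side $g$), one may alternatively simply quote their argument.
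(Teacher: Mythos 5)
Your proposal is correct, but it takes a more self-contained route than the paper, whose entire proof is a one-line citation: the result is deduced from Lemma 2.3 of \cite{Cabre-SolaMorales:layer-solutions}, read through Definition \ref{defi-weak-solution} (the extra right-hand side $g$ being absorbed into the boundary datum exactly as you do). What you have written is essentially a reconstruction of the proof of that cited lemma: localize the variational identity with test functions supported in $B_{4R}^+$ to obtain the distributional Neumann condition $\partial u/\partial\nu = d\,Tu-g$ on $(-4R,4R)\times\{0\}$ (mind the paper's sign convention $Lw=-\partial u/\partial\nu$, which only changes signs, not the argument); bootstrap from the $L^\infty$ Neumann datum to $u\in\mathcal C^{0,\alpha}(\overline{B_{2R}^+})$ for every $\alpha<1$; then subtract the half-plane Neumann potential $c\int\log\bigl((x-\xi)^2+y^2\bigr)h(\xi)\,d\xi$ with $h=d\,Tu-g\in\mathcal C^\beta$, whose normal derivative on the segment is exactly $h$ (no jump factor in the half-plane setting) and which is $\mathcal C^{1,\beta}$ up to $\{y=0\}$, and even-reflect the remainder to apply interior Schauder estimates. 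Your two flagged obstacles (justifying the localized pointwise Neumann condition from the global test-function class $H$, and the first H\"older step from merely bounded data) are precisely the content of the cited lemma, and your treatment of them is sound. The trade-off: the paper's citation is shorter and defers all boundary-regularity technology to \cite{Cabre-SolaMorales:layer-solutions}, while your direct argument makes the dependence of $C_R$ on $\norm{u}_{L^\infty(B_{4R}^+)}$, $\norm{d}_{\mathcal C^\beta}$, $\norm{g}_{\mathcal C^\beta}$ explicit and shows transparently why only local information on $d$, $g$, $Tu$ in $[-4R,4R]$ enters; since you also offer the citation as a fallback, the two routes are fully compatible.
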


\begin{proof}
This is a consequence of Lemma 2.3 from \cite{Cabre-SolaMorales:layer-solutions}, using  definition \ref{defi-weak-solution} for weak solutions.
\end{proof}
\qed

In particular, we obtain $\mathcal C^{1,\beta}_{\mbox{loc}}$  regularity up to the boundary $\partial\Omega$:

\begin{cor}{\bf (Uniform regularity of weak solutions)}.\label{cor-regularity}
Given $R$, $\beta$, $w$, $u$, $d$, $g$ as above, assume, in addition, that  $\norm{d}_{\mathcal C^\beta(\mathbb R)}\leq C$, $\norm{g}_{\mathcal C^\beta(\mathbb R)}\leq C$. Then $u\in \mathcal C^{1,\beta}(\mathbb R\times [0,R])$ and
$$\norm{\nabla u}_{L^\infty(\mathbb R\times [0,R])}\leq C_R,$$
for some constant $C_R$ that depends only on $\beta$, $R$ and on the upper bounds for $\norm{u}_H$, $\norm{d}_{\mathcal C^\beta(\mathbb R)}$ and $\norm{g}_{\mathcal C^\beta(\mathbb R)}$.
\end{cor}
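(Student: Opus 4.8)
The plan is to derive the global estimate from the local boundary regularity of Proposition~\ref{prop-regularity} by exploiting the horizontal translation invariance of the problem, the one preliminary ingredient being a global $L^\infty$ bound for $u$ that does not itself involve $\|u\|_{L^\infty}$.

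\emph{Step 1 (a global $L^\infty$ bound).} Since $u=P*_x w$ with $w\in H^{\half}(\mathbb{R})$, the identity \eqref{equality-norms} gives $\|w\|_{L^2(\mathbb{R})}^2\le\|u\|_H^2$. Applying the Moser iteration of Proposition~\ref{prop-moser-iteration} (or Corollary~\ref{cor-Moser} in the presence of a right-hand side $g$, whose integrability hypothesis holds in the applications of interest) to the weak equation \eqref{weak-equation}, one bounds $\sup_{\mathbb{R}}|w|$ by a constant times $\|w\|_{L^2(\mathbb{R})}$, the constant depending only on the bounds for $d$ and $g$; by Theorem~\ref{thm-Stein}, $\|u\|_{L^\infty(\Omega)}\le\|w\|_{L^\infty(\mathbb{R})}$ is then controlled purely in terms of $\|u\|_H$, $\|d\|_{\mathcal C^\beta(\mathbb{R})}$ and $\|g\|_{\mathcal C^\beta(\mathbb{R})}$. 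It is this that will make the eventual constant independent of $\|u\|_{L^\infty}$.

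\emph{Step 2 (translation and covering).} For $x_0\in\mathbb{R}$ and $r>0$ write $B_r^+(x_0,0)=\{(x,y)\,:\,(x-x_0)^2+y^2<r^2,\ y>0\}$. The Laplacian, the half-plane $\Omega$ and the class of equations \eqref{weak-equation} are all invariant under the translation $x\mapsto x+x_0$, so Proposition~\ref{prop-regularity} applies verbatim on $B_{4R}^+(x_0,0)$, with $d$ and $g$ replaced by $d(\cdot+x_0)$ and $g(\cdot+x_0)$. Now $\|d(\cdot+x_0)\|_{\mathcal C^\beta([-4R,4R])}\le\|d\|_{\mathcal C^\beta(\mathbb{R})}\le C$ and likewise for $g$, while by Step~1 $\|u\|_{L^\infty(B_{4R}^+(x_0,0))}\le\|u\|_{L^\infty(\Omega)}$, $\int_{B_{4R}^+(x_0,0)}|\nabla u|^2\le\|u\|_H^2$ and $\int_{B_{4R}^+(x_0,0)}u^2\le |B_{4R}^+|\,\|u\|_{L^\infty(\Omega)}^2$; hence the hypotheses of Proposition~\ref{prop-regularity} hold with bounds uniform in $x_0$, yielding a constant $C_R$ independent of $x_0$ such that $\|u\|_{\mathcal C^{1,\beta}(\overline{B_R^+(x_0,0)})}\le C_R$, with $C_R$ depending only on $\beta$, $R$ and upper bounds for $\|u\|_H$, $\|d\|_{\mathcal C^\beta(\mathbb{R})}$, $\|g\|_{\mathcal C^\beta(\mathbb{R})}$. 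Since every point $(x_1,y_1)$ with $0\le y_1\le R$ lies in $\overline{B_R^+(x_1,0)}$, patching these estimates over $x_1\in\mathbb{R}$ gives $u\in\mathcal C^{1,\beta}(\mathbb{R}\times[0,R])$ together with $\|\nabla u\|_{L^\infty(\mathbb{R}\times[0,R])}\le C_R$.

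\emph{Main obstacle.} The substantive inputs — the boundary $\mathcal C^{1,\beta}$ estimate of Proposition~\ref{prop-regularity} and the Moser $L^\infty$ bound of Proposition~\ref{prop-moser-iteration}/Corollary~\ref{cor-Moser} — are already available, so the argument above is essentially bookkeeping. The one delicate point is that the constant produced by Proposition~\ref{prop-regularity} a priori depends on $\|u\|_{L^\infty(B_{4R}^+)}$; the real content is therefore Step~1, namely checking that this $L^\infty$ norm is itself bounded in terms of $\|u\|_H$ and the $\mathcal C^\beta$-data, uniformly in the translation parameter $x_0$. Granting this, the corollary follows by the covering argument.
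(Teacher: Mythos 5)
Your proposal is correct and follows essentially the same route as the paper: apply Proposition \ref{prop-regularity} on balls $B_R^+(x_0,0)$ for every $x_0\in\mathbb{R}$ by translation invariance, with the needed uniform $L^\infty$ bound on $u$ supplied by the Moser-type estimate (Corollary \ref{cor-Moser}), whose integrability hypothesis on $g$ the paper likewise only invokes ``in our case''. Your Step 1, making explicit that the $L^\infty$ bound is controlled by $\norm{u}_H$ and the $\mathcal C^\beta$ data, is just a more detailed account of the same argument.
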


\begin{proof}
We apply the previous proposition to every ball $B_R(x_0,0)$, $x_0\in\mathbb R$, as in Lemma 2.3,
part (b) of \cite{Cabre-SolaMorales:layer-solutions}.
It uses the  condition that $u$ is uniformly bounded on $\Omega$, which in our case follows from Corollary
\ref{cor-Moser}.
\end{proof}
\qed


\section{Results on the layer solution and the corrector}\label{section-corrector}

In this last section, we give the proofs of Theorems \ref{th::5} and \ref{th::6}.

\subsection{Proof of Theorem \ref{th::5}}

Let $W$ be a potential satisfying hypothesis \emph{Ai)}. Theorem \ref{thm-layer-solutions} shows existence, regularity, uniqueness and asymptotic behavior of a layer solution $0<u_W<1$ satisfying $u_W(0,0)=1/2$, $u\in\mathcal C^{2,\beta}_{\mbox{loc}}(\overline{\Omega})$, for the problem
\begin{equation}\label{smooth-solution}
\left\{\begin{split}
\Delta u &= 0 &\quad \mbox{in } \Omega,\\
\frac{\partial u}{\partial \nu} &= -W'(u) &\quad \mbox{on } \partial\Omega.
\end{split}\right.\end{equation}
We denote its trace value by $\phi:=u_W(\cdot,0)$. The aim of the next paragraphs is to show the more precise estimate \eqref{eq::6} for the asymptotic behavior near infinity.\\

Set $a=1/\alpha$, where $\alpha=W''(0)$ as defined in (\ref{eq::900}), and choose the model layer solution $u^a$ as given in \eqref{model-layer}. We know explicitly because of \eqref{asymptotic-layer} that $\phi^a:=u^a(\cdot,0)$ has the right asymptotic behavior \eqref{eq::6}. We would like to compare $u_W$ to $u^a$, in order to obtain the analogous estimate for $u_W$. Therefore, we set $v:=u_W-u^a$. Then $v$ satisfies
\begin{equation*}\left\{\begin{split}
&\Delta v= \;0 \quad &\mbox{in } \Omega,\\
&\frac{\partial v}{\partial \nu}= -W'(\phi)+V'_a(\phi^a)\quad &\mbox{on } \partial\Omega.
\end{split}\right.
\end{equation*}
Now we estimate
$$-W'(\phi)+W'(\phi^a)=-W''(0)[\phi-\phi^a]+O\lp \frac{1}{1+\abs{x}^2}\rp,$$
thanks to the growth estimate \eqref{growth-not-sharp}. And also,
$$-W'(\phi^a)+V'_a(\phi^a)=[-W''(0)+V_a''(0)]\phi^a+O\lp \frac{1}{1+\abs{x}^2}\rp.$$
Because of the choice of $a$, we have that $W''(0)=V_a''(0)$. Thus
 adding the previous two lines we conclude that
$v$ is a solution of
\begin{equation*}\left\{\begin{split}
&\Delta v = 0 \quad &\mbox{
 in } \Omega,\\
&\frac{\partial v}{\partial \nu} +W''(0) v=M(x) &\mbox{ on }\partial\Omega,
\end{split}\right.\end{equation*}
for some function $M$ satisfying
$$\abs{M(x)}=O\lp \frac{1}{1+\abs{x}^2}\rp .$$
Therefore, \eqref{eq::6} follows from Corollary \ref{cor-comparison} (comparison principle for smooth solutions) and the fact that $W''(0)>0$.

The proof of Theorem \ref{th::5} is completed due to the fact that a smooth, bounded solution $u$ of \eqref{smooth-solution}, with trace $\phi$, is a viscosity solution in the sense of Definition \ref{defi::1} of
$$L \phi -W'(\phi)=0\quad \mbox{in }\mathbb R,$$
with $L$ defined through the L\'evy-Khintchine formula, as it is noted in Lemma \ref{lemma-equivalence}.
\qed

\subsection{Proof of Theorem \ref{th::6} - coercivity}

Here we remind the reader of the notation introduced in Section \ref{section-half-Laplacian}: let $\phi$ be the layer solution constructed Theorem \ref{th::5}, with harmonic extension $u_W$ (some properties are given also in Theorem \ref{thm-layer-solutions}). Its asymptotic behavior is given in Theorem \ref{th::5}:
\be\label{asymptotics}\left\{\begin{split}
\left|\phi(x)-H(x)+\frac{1}{\alpha \pi x}\right| \quad &\le \quad \frac{C}{1+x^2}, \\
\abs{\phi'(x)} &\leq \frac{C}{1+x^2},
\end{split}\right.\ee
for all $x\in\mathbb R$. We also fix  $\bar u:=(u_W)_x\in \mathcal C^{1,\beta}_{\mbox{loc}}(\bar \Omega)$, and note that $\bar u=P*_x\phi'\in H$. For some of its properties, see also Lemma \ref{properties}.\\

In the following, we set up the necessary functional analysis ingredients for Theorem \ref{th::6}, while the proof will be completed in the last subsection.

We set $H_0$ to be the subspace in $H$ consisting of functions whose trace is orthogonal to $\phi'$ in $L^2(\partial\Omega)$, i.e,
\begin{equation}\label{space}
H_0:=\left\{ u\in H : \int_{\partial\Omega} u\phi'=0\right\}.
\end{equation}

\begin{lem}
$H_0$ is a closed subspace of $H$.
\end{lem}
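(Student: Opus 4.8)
The plan is to show that $H_0$ is the kernel of a bounded linear functional on $H$, hence closed. Define $\ell:H\to\R$ by $\ell(u):=\int_{\partial\Omega} u\,\phi'$, where the integral is over $\partial\Omega=\R$ and $u$ is understood as its trace $Tu$. Then by definition $H_0=\ker\ell$, so it suffices to verify that $\ell$ is well-defined and continuous on $H$.

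First I would check that the integral makes sense for every $u\in H$. By Theorem \ref{th::3} the trace map $T:H\to H^{1/2}(\R)$ is bounded, and by Theorem \ref{Sobolev} the embedding $H^{1/2}(\R)\hookrightarrow L^2(\R)$ is continuous; meanwhile $\phi'\in L^2(\R)$ by the decay estimate \eqref{eq::5} (or \eqref{asymptotics}), which gives $|\phi'(x)|\le C/(1+x^2)$ and hence $\phi'\in L^2(\R)$. So the product $(Tu)\phi'$ is in $L^1(\R)$ by Cauchy--Schwarz and $\ell$ is well-defined.

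Next I would establish continuity: for $u\in H$,
$$|\ell(u)|\le \norm{Tu}_{L^2(\R)}\,\norm{\phi'}_{L^2(\R)}\le C\,\norm{Tu}_{H^{1/2}(\R)}\,\norm{\phi'}_{L^2(\R)}\le C'\,\norm{u}_H\,\norm{\phi'}_{L^2(\R)},$$
using Cauchy--Schwarz, then the Sobolev embedding constant, then the trace inequality \eqref{trace-mapping}. Since $\norm{\phi'}_{L^2(\R)}$ is a fixed finite constant, $\ell$ is a bounded linear functional on the Hilbert space $H$. Therefore $H_0=\ker\ell$ is a closed linear subspace of $H$, which is the assertion.

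I do not anticipate a genuine obstacle here; the only point requiring any care is making sure the pairing $\int_{\partial\Omega}u\phi'$ is interpreted via the trace and is absolutely convergent, which is handled by combining the trace bound, the Sobolev embedding, and the $L^2$ decay of $\phi'$ already recorded in Theorem \ref{th::5} and Lemma \ref{properties}. Everything else is the standard fact that the kernel of a continuous linear functional on a normed space is closed.
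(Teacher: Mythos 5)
Your proof is correct and is essentially the paper's argument: both rest on the continuity of $u\mapsto\int_{\partial\Omega}(Tu)\,\phi'$ obtained from Cauchy--Schwarz, the $L^2$ bound on $\phi'$ from \eqref{eq::5}, and the boundedness of the trace into $L^2(\R)$; you phrase it as the kernel of a bounded linear functional while the paper runs the same estimate on a convergent sequence $u_k\to u_0$.
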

\begin{proof}
As we have noted in Theorem \ref{th::3}, the trace of functions in $H$ is a well defined function in $H^{\frac{1}{2}}(\mathbb R)$. In particular, the trace mapping $T:H\to L^2(\mathbb R)$ is continuous by Sobolev embedding (Theorem \ref{Sobolev}). Then, if we have a sequence $\{u_k\}_{k=1}^\infty\to u_0$ in $H$ with $u_k\in H_0$, then
$$\abs{\int_{\partial\Omega} \phi'(u_k-u_0)}\leq \lp\int_{\partial\Omega} \phi'^2\rp^{1/2} \lp\int_{\partial\Omega} \abs{u_k-u_0}^2\rp^{1/2}\to 0\quad\mbox{when } k\to \infty,$$
using the bounds for $\phi'$ given in \eqref{asymptotics}.
We conclude that $u_0\in H_0$, and the lemma is shown.
\end{proof}
\qed

Now, given $u,v\in H$, we define the bilinear functional
\be\label{bilinear-functional}
B(u,v):=\int_\Omega \nabla u \cdot\nabla v+\int_{\partial\Omega} W''(\phi)uv,\ee
whose associated quadratic form is precisely
$$Q(u)=\int_\Omega|\nabla u|^2+\int_{\partial\Omega}W''(\phi)u^2,\quad u\in H.$$
It is easy to check that $B$ is a continuous and symmetric bilinear functional in $H$.\\

The key lemma in this section is the following:

\begin{lem}\label{lem:1}{\bf (Lower bound for $Q$).}
There is some constant $C_0>0$ such that for all $u\in H_0$, where $H_0$ is defined in \eqref{space}, we have that
$$Q(u)\geq C_0\int_{\partial\Omega} u^2.$$
\end{lem}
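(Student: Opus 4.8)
The plan is to argue by contradiction using a concentration-compactness / normalized sequence argument, exploiting the fact that $\phi'>0$ is the ground state of the operator $L-W''(\phi)$ (Lemma \ref{properties} says $\bar u = P*_x\phi'$ satisfies $B(\bar u,v)=0$ for all $v\in H$, i.e.\ $\phi'$ is a zero-energy eigenfunction), and that the orthogonality constraint defining $H_0$ removes exactly this kernel direction. First I would observe that $Q\geq 0$ on all of $H$: this is precisely the stability inequality \eqref{stable-solution} from Theorem \ref{thm-layer-solutions}, valid for $v\in\mathcal C^1(\overline\Omega)$ with compact support and hence, by density, for all $v\in H$. So the content of the lemma is the strict, quantitative gap once we restrict to $H_0$.

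Suppose the conclusion fails. Then there is a sequence $u_n\in H_0$ with $\int_{\partial\Omega}u_n^2 = 1$ and $Q(u_n)\to 0$. Since $W''(\phi)$ is bounded and $W''(\phi)\to\alpha>0$ at $\pm\infty$, the negative part of the potential term is supported on a bounded set of $\mathbb R$, so $Q(u_n)\to 0$ together with $\int_{\partial\Omega}u_n^2=1$ forces $\int_\Omega|\nabla u_n|^2$ to stay bounded; hence $\|u_n\|_H$ is bounded and, up to a subsequence, $u_n\rightharpoonup u_\infty$ weakly in $H$. By the compact embedding of $H^{1/2}(\mathbb R)$ into $L^p(K)$ for compact $K$ (Theorem \ref{Sobolev}, via the trace $T$), $Tu_n\to Tu_\infty$ strongly in $L^2_{loc}(\mathbb R)$. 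Weak lower semicontinuity of $\int_\Omega|\nabla\cdot|^2$ and of the nonnegative part of the potential term, combined with the local strong convergence handling the (compactly supported) negative part, gives $Q(u_\infty)\leq\liminf Q(u_n)=0$; since $Q\geq 0$ we get $Q(u_\infty)=0$. The constraint $\int_{\partial\Omega}u_n\phi'=0$ passes to the limit (again by the $\phi'$-weighted estimate used to show $H_0$ is closed), so $u_\infty\in H_0$. Now $Q(u_\infty)=0$ together with $Q\geq 0$ means $u_\infty$ minimizes $Q$, hence is a weak solution of $Lw-W''(\phi)w=0$; by the characterization of the kernel (the ground state $\phi'>0$ is simple, which follows from the strong comparison principle Lemma \ref{lema-comparison} applied after shifting the bounded potential to be positive, or directly from stability + positivity of $\phi'$) we conclude $u_\infty$ is a scalar multiple of $\bar u=P*_x\phi'$. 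But $u_\infty\in H_0$ forces $\int_{\partial\Omega}u_\infty\phi' = 0$, and since $\int_{\partial\Omega}\bar u\,\phi' = \int_\mathbb R(\phi')^2>0$, the multiple must be zero: $u_\infty\equiv 0$.

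The final step is to derive a contradiction from $u_\infty\equiv 0$, which requires upgrading the weak convergence to something strong enough to preserve $\int_{\partial\Omega}u_n^2=1$. From $Q(u_n)\to 0$ we have $\int_\Omega|\nabla u_n|^2 = Q(u_n) - \int_{\partial\Omega}W''(\phi)u_n^2 \to -\int_{\partial\Omega}W''(\phi)u_\infty^2 = 0$ once we know $\int_{\partial\Omega}W''(\phi)u_n^2\to\int_{\partial\Omega}W''(\phi)u_\infty^2$; this convergence on the whole line is not free because the mass of $u_n$ could escape to infinity, but there $W''(\phi)\to\alpha>0$, so on $\mathbb R\setminus[-R,R]$ one has $\int W''(\phi)u_n^2 \geq (\alpha/2)\int_{|x|>R}u_n^2$ for $R$ large, and combined with $Q(u_n)$ small and the $L^2_{loc}$ convergence on $[-R,R]$ this pins all the mass near the origin and yields $\int_{\partial\Omega}u_n^2\to\int_{\partial\Omega}u_\infty^2 = 0$, contradicting $\int_{\partial\Omega}u_n^2=1$. \emph{The main obstacle} is exactly this no-loss-of-mass point: ruling out that a normalized minimizing sequence for $Q$ concentrates its boundary $L^2$-mass at infinity. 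The coercivity of $W''(\phi)$ near $\pm\infty$ (guaranteed by $W''(0)=\alpha>0$) is what makes it work, so the argument must be organized to use that coercivity both to bound $\|u_n\|_H$ at the start and to prevent escape of mass at the end; the simplicity of the kernel direction $\phi'$ is the other ingredient, and I would isolate it as a short separate claim rather than re-deriving it inline.
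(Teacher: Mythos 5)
Your overall strategy is sound and is essentially the paper's argument reorganized as a proof by contradiction. Where the paper sets up a constrained minimization of $Q$ on $\left\{u\in H_0:\int_{\partial\Omega}u^2=1\right\}$, splits $Q=Q_1+Q_2$ with a compactly supported correction $h$ making $W''(\phi)+h\ge\tau>0$, upgrades weak convergence of the traces to strong convergence (weak convergence plus convergence of norms), and then runs a Lagrange multiplier computation to show that the multiplier $\lambda=Q(u_0)$ cannot vanish, you instead show that the weak limit of a normalized sequence with $Q(u_n)\to0$ must lie in the kernel of the form, hence be a multiple of $\bar u=P*_x\phi'$ (Lemma \ref{properties}), hence vanish by the $H_0$-orthogonality, and then rule out escape of mass using $W''(\phi)\ge\alpha/2$ outside a compact set together with strong $L^2_{loc}$ convergence of the traces. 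This endgame correctly replaces both the paper's strong-convergence step and its Case A ($u_0\equiv0$), and it dispenses with the Lagrange multipliers; your lower semicontinuity bookkeeping (convex gradient term and nonnegative part of the potential weakly l.s.c., negative part compactly supported and handled by local strong convergence) is also fine.

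The gap is the step you compress into one parenthesis: that $Q(u_\infty)=0$, equivalently $B(u_\infty,v)=0$ for all $v\in H$, forces $u_\infty$ to be a multiple of $\bar u$. This is the heart of the lemma and is where the paper does most of its work (its Steps 4 and 5). As sketched, your justification does not go through: Lemma \ref{lema-comparison} requires a bounded function that is $\mathcal C^2$ in $\Omega$ and $\mathcal C^1$ up to $\partial\Omega$, together with a zeroth-order coefficient bounded below by a positive constant, while your $u_\infty$ is a priori only in $H$, and $W''(\phi)$ is genuinely negative on a bounded set (where $\phi$ is near $1/2$); simply ``shifting the bounded potential to be positive'' turns the boundary condition into one with right-hand side $d_2 u_\infty$ whose sign is unknown, so the comparison lemma does not apply directly. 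The paper's route is: first an $L^\infty$ bound by Moser iteration (Proposition \ref{prop-moser-iteration}) and $\mathcal C^{1,\beta}$ regularity up to the boundary (Proposition \ref{prop-regularity}), so that the comparison results are applicable to $u_\infty$ at all; then a two-stage comparison with $\kappa\bar u$: the weak maximum principle (Proposition \ref{prop-weak-comparison}), valid because $W''(\phi)\ge\tau>0$ outside $[-R,R]$, gives $|u_\infty|\le\kappa\bar u$ globally, and only then can one slide $\kappa$ down to a touching value $\kappa^*$ and apply Lemma \ref{lema-comparison} with $d_1=\max\{W''(\phi),\tau\}$, the term $d_2(\kappa^*\bar u-u_\infty)\ge0$ now having a known sign. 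So your ``short separate claim'' is in fact a substantial sub-lemma (regularity theory plus a sliding/comparison argument), and as written the proposal leaves it unproved.
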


So we immediately have that
\begin{cor}\label{cor-coercivity} {\bf (Coercivity).}
There exists some $C>0$ such that for all $u\in H_0$,
$$B(u,u)\geq C\norm{u}_H^2.$$
\end{cor}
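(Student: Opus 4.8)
The plan is to obtain the full coercivity estimate from Lemma \ref{lem:1} by a one-parameter interpolation, so that all of the substance is already contained in that lemma. Note first that the quadratic form of $B$ is $Q$, so the assertion $B(u,u)\geq C\norm{u}_H^2$ is the statement that $Q$ controls the whole norm $\norm{u}_H^2=\int_\Omega|\nabla u|^2+\int_{\partial\Omega}u^2$, whereas Lemma \ref{lem:1} only gives control of the boundary term $\int_{\partial\Omega}u^2$.

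First I would record the elementary lower bound
$$Q(u)=\int_\Omega|\nabla u|^2+\int_{\partial\Omega}W''(\phi)\,u^2\ \geq\ \int_\Omega|\nabla u|^2-M\int_{\partial\Omega}u^2,$$
where $M:=\sup_{[0,1]}|W''|<\infty$ since $W\in C^{2,\beta}(\R)$ by assumption (A)i) and $0\leq\phi\leq 1$. On its own this is useless because of the wrong-signed boundary term, but it does control the Dirichlet energy.

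Next I would interpolate between the two bounds. For a parameter $\theta\in(0,1)$ to be chosen, write $Q(u)=(1-\theta)Q(u)+\theta Q(u)$, estimate the first summand by the displayed inequality and the second by Lemma \ref{lem:1} (which applies since $u\in H_0$), and regroup to get
$$Q(u)\ \geq\ (1-\theta)\int_\Omega|\nabla u|^2+\bigl(\theta C_0-(1-\theta)M\bigr)\int_{\partial\Omega}u^2.$$
Finally, any choice $\theta\in\bigl(\tfrac{M}{M+C_0},\,1\bigr)$ makes both coefficients strictly positive (this interval is nonempty because $C_0>0$), and with such a $\theta$ fixed and $C:=\min\{1-\theta,\ \theta C_0-(1-\theta)M\}>0$ one reads off $B(u,u)=Q(u)\geq C\norm{u}_H^2$ for all $u\in H_0$.

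I do not expect a genuine obstacle in this corollary. The only delicate point is that $Q$ dominates the boundary $L^2$-norm after restricting to the codimension-one subspace $H_0$ — this must fail on all of $H$, since the positive function $\bar u=(u_W)_x\in H$ satisfies $Q(\bar u)=0$ by Lemma \ref{properties} — and that point is precisely Lemma \ref{lem:1}, which we may take as given. The remaining step merely trades a fraction of that boundary control for the interior Dirichlet term, using only the boundedness of $W''$ on the range $[0,1]$ of $\phi$.
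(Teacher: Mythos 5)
Your argument is correct and is essentially the paper's own proof: the paper likewise writes $Q(u)$ as a convex combination, applies Lemma \ref{lem:1} to one fraction and absorbs the bounded term $W''(\phi)$ with the other, choosing the parameter $\mu=1-\theta$ small so that both the Dirichlet and boundary coefficients stay positive. Your variant of bounding $W''(\phi)\geq -M$ outright rather than keeping $(1-\mu)C_0+\mu W''(\phi)$ is only a cosmetic difference.
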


\begin{proof}
It easily follows from Lemma \ref{lem:1}. Indeed,
\begin{equation*}
\begin{split}
B(u,u) & =\int_\Omega \abs{\nabla u}^2+\int_{\partial\Omega}W''(\phi)u^2\geq (1-\mu) C_0 \int_{\partial\Omega} u^2+\mu Q(u)\\
& \geq \mu\int_\Omega \abs{\nabla u}^2+\int_{\partial\Omega} u^2\left[(1-\mu)C_0+\mu W''(\phi)\right].\\
\end{split}\end{equation*}
Because $W''$ is bounded, there exists $\mu>0$ small enough such that $(1-\mu)C_0+\mu W''(\phi)>\tfrac{C_0}{2}$. Therefore,
\begin{equation}\label{eq::909}
B(u,u)\geq\min\left\{\mu,\tfrac{C_0}{2}\right\} \norm{u}_H^2,
\end{equation}
as claimed.
\end{proof}
\qed

\bigskip

\textbf{Proof of Lemma \ref{lem:1}:} It is divided into several steps:\\

\emph{Step 1: Stability.} Because of the construction of $u_W$ and $\phi$, we know  from Theorem \ref{thm-layer-solutions} that $Q(u)\geq 0$ for all $u\in \mathcal C_c^\infty(\bar\Omega)$ (i.e. layer solutions are stable). Thus the functional $Q$ is  bounded from below also in $H$.\\

\emph{Step 2: Existence.}\\
\emph{Step 2.1: Preliminaries.}\\
We seek a minimizer for the functional
$$Q(u)=\int_{\Omega}\abs{\nabla u}^2+\int_{\partial\Omega} W''(\phi)u^2$$
subject to the constraints
$$u\in H_0\quad\mbox{and}\quad \int_{\partial\Omega}u^2=1.$$
 Note that $W''(\phi)$ is not bounded from below by any positive constant. However, due to our assumptions $Ai)$ on $W$ and the growth condition \eqref{asymptotics} for $\phi$, we still can find some $\tau>0$ and $h$ a non-negative smooth function supported on a compact set $K:=[-R,R]\subset\mathbb R$ that makes $W''(\phi)+h\geq \tau>0$ for all $x\in\mathbb R$. We write
$$Q(u)=\left[ \int_{\Omega}\abs{\nabla u}^2+\int_{\partial\Omega} \lp W''(\phi)+h\rp u^2 \right]-\int_{K} h u^2=:Q_{1}(u)+Q_2(u).$$
Let $\{u_k\}_{k=1}^{\infty}\subset H_0$ be a minimizing sequence for $Q$ with $\int_{\partial\Omega} u_k^2=1$. This $L^2$ bound for $u_k$ tells us that, in particular, $\abs{Q_2(u_k)}\leq \sup h\leq C$ for all $k$, for some constant $C>0$. As a consequence, we also know that there exists a constant $C'$ such that $Q_1(u_k)\leq C'$. From here we get a uniform bound for the $H$-norm of the sequence $\{u_k\}$, depending on $\tau$ and $C'$.\\

Next, we concentrate on the space $H^\half(\mathbb R)$. We define an equivalent Hilbert space norm $\norm{\cdot}_{\tilde H}$ on $\mathbb R$ as follows: for any $w\in H^\half(\mathbb R)$, we consider its harmonic extension $u:=P*_x w$, and define
$$\norm{w}_{\tilde H}:=Q_1(u).$$

We have shown that $\norm{u_k}_H\leq C$ for all $k$. This implies that for $w_k:=T u_k$, the norm $\norm{w_k}_{\tilde H}\leq C'$ for another constant $C'$. Because $\tilde H$ is a Hilbert space, then there is a subsequence (still denoted by $w_k$), such that $w_k \rightharpoondown w_0$, weakly in $\tilde H$, for some $w_0\in H^\half(\mathbb R)$. Weak convergence implies, for instance (\cite{Brezis:book}, proposition III.12), that
$$\norm{w_0}_{\tilde H}\leq \liminf_{k\to\infty}\norm {w_k}_{\tilde H}.$$
Set $\tilde u_k:=P *_x w_k$, that has the same trace as $u_k$, and $u_0:=P*_x w_0$. It is clear that
\be\label{Q_1}Q_1(\tilde u_k)\leq Q_1(u_k)\ee
and that equality holds when $u_k$ is already harmonic.
Then we have shown that
\be\label{Q1} Q_1(u_0)=\norm{w_0}_{\tilde H}\leq \liminf_{k\to\infty} \norm{w_k}_{\tilde H}=Q_1(\tilde u_k)\leq Q_1(u_k).\ee

On the other hand, the embedding $H^\half(\R) \hookrightarrow L^p(K)$ is compact for any compact subset $K\subset \mathbb R$ and $p>2$, as stated in Theorem \ref{Sobolev}. Thus, we can find some subsequence (still denoted by $w_k$) such that $w_k\to \bar w_0$ strongly in $L^p(K)$. As a consequence, also $w_k\to \bar w_0$ in $L^2(K)$. The uniqueness of the limit will give that $\bar w_0=w_0$, at least a.e. in $K$. In addition, since $Q_2$ lives only on the compact subset $K\subset \R$, we also have that
\be\label{Q2} Q_2(u_k)\to Q_2(u_0).\ee

From \eqref{Q1} and \eqref{Q2} we deduce that $u_0\in H$ satisfies
$$Q(u_0)\leq \liminf_{k\to \infty} Q(u_k).$$

Remark that if $\liminf_{k\to\infty} Q(u_k) >0$, then we have already finished the proof, because this implies (\ref{eq::909}).
Therefore we can assume that
\begin{equation}\label{eq::910}
\liminf Q(u_k) =0
\end{equation}

\emph{Step 2.2: Concluding.}\\
\emph{Case A: $u_0\equiv 0$}\\
Remark first that
$$Q_1(u_k)\ge \tau \int_{\partial \Omega} u_k^2 = \tau >0.$$
On the other hand, we have $Q_2(u_k)\to Q_2(u_0)=0$. Therefore we get
$$\liminf_{k\to\infty} Q(u_k) = \liminf_{k\to\infty} \left\{Q_1(u_k)+ Q_2(u_k)\right\} \ge \tau >0,$$
which is in contradiction with (\ref{eq::910}). Therefore only the next case occurs.\\
\emph{Case B: $u_0\not\equiv 0$}\\
Then we have $0\le Q_1(u_0)=0$ and we will show that $Tu_k\to T u_0$ in $H^\half(\mathbb R)$.
Since $\{u_k\}$ was a minimizing sequence, then $Q(u_k)\to 0= Q(u_0)$ when $k\to\infty$.
In addition, we have shown that $Tu_k\to Tu_0$ strongly in $L^2(K)$, so that $Q_2(u_k)\to Q_2(u_0)$.
Because $Q(u_0)=0\le Q(\tilde{u}_k)\le Q(u_k) \to 0$, we deduce that $Q(\tilde{u}_k)\to Q(u_0)$.
Moreover we have  $Q_2(\tilde{u}_k)= Q_2(u_k) \to Q_2(u_0)$ and $Q=Q_1+Q_2$ which implies that
\be\label{limit1}Q_1(\tilde u_k) \to \ Q_1(u_0).\ee
By definition \eqref{limit1} means that
\be\label{norms-are-equal}
\lim_{k\to\infty} \norm{w_k}_{\tilde H}=\norm{w_0}_{\tilde H}.\ee
Proposition III.30 in \cite{Brezis:book} assures that if we have a weakly convergent sequence $w_k\rightharpoondown w_0$ in the space $\tilde H$ and such that we have convergence of the norms \eqref{norms-are-equal}, then the convergence  $w_k\to w_0$ is strong in $\tilde H$, and then in $H^{\half}(\mathbb R)$.

As a consequence, $w_k=Tu_k\to Tu_0=w_0$ in $L^2(\mathbb R)$, which, in particular, implies the non-degeneracy of the $L^2(\partial\Omega)$ norm of $w_0$:
$$\int_{\partial\Omega}w_0^2=\lim_{k\to\infty}\int_{\partial\Omega} w_k^2 =1.$$

To finish, just remark that strong convergence in $L^2(\mathbb R)$ implies that $\int_{\partial\Omega} u_0 \phi'=0$. Thus, $u_0\in H_0$.\\

\emph{Step 3: Euler-Lagrange equation.} Once the minimizer $u_0$ is found, the theory of Lagrange multipliers for minimization problems (see, for instance, \cite{Evans}, chapter 8.4) gives that $u_0\in H$ must be a weak solution of
\be\label{Lagrange}B(u_0,v)=\lambda \int_{\partial\Omega} u_0v+\mu \int_{\partial\Omega} \phi'v \quad \mbox{ for every } v\in H,\ee
where $\lambda,\mu\in\mathbb R$. The value of the multipliers can be found by choosing the right test function $v$. Thus if we test the equality \eqref{Lagrange} with $v=P*_x\phi'$, we obtain $\mu=0$ (see Lemma \ref{properties}), while testing against $v=u_0$ gives that
\be\label{eq::100}\lambda=Q(u_0)=\int_{\Omega}\abs{\nabla u_0}^2+\int_{\partial\Omega} W''(\phi)u_0^2 \quad (\geq 0).\ee

\emph{Step 4: Regularity.} First, Proposition \ref{prop-moser-iteration} with $d(x)=W''(\phi(x))-\lambda$ implies a uniform $L^\infty$ bound for both $w_0$ and $u_0=P*_x w_0$. Then, Proposition \ref{prop-regularity} assures that $u_0\in\mathcal C^{1,\beta}_{\mbox{loc}}(\bar\Omega)$, with the following boundary estimate
$$\norm{u_0}_{\mathcal C^{1,\beta}(\overline{B_R^+})}\leq C\lp \beta,R,\norm{u_0}_{L^\infty(B_{4R}^+)}, \norm{W''(\phi)}_{\mathcal C^{\beta}([-4R,4R])}\rp.$$
Note that $u_0$ is as smooth as we want in the interior.

\emph{Step 5: Positivity.} In the following we show that if $\lambda=0$, then  $w_0=Tu_0$ must necessarily be a multiple of $\phi'$, which is a contradiction with the fact that $u_0\in H_0$ and $\int_{\partial \Omega} u_0^2=1$.

Thus we take $\lambda=0$ in \eqref{Lagrange}. Then we have a solution $u_0$ satisfying $B(u_0,v)=0$ for all $v\in H$. We remind the reader that have defined $u_W$ to be the layer solution for our potential $W$, with trace $\phi:=u_W(\cdot,0)$, and $\bar u:=\partial_x u_W$, which is also uniformly bounded and has trace $\bar u(\cdot, 0)=\phi'$.
Because of the hypothesis on the potential $W$ and the asymptotic behavior of the layer solution $\phi$, there exists some $\tau>0$ such that  $W''(\phi)\geq \tau$ on $\mathbb R\backslash[-R,R]$ for some $R>0$.

We have shown, in particular, that $u_0$ is uniformly bounded in $\overline{\Omega}$. Since $\bar u$ is positive from \eqref{derivative-positive}, then there exists $\kappa\geq 0$ such that $\abs{u_0}<\kappa \bar u$ in $B^+_R$. We set $v:=\kappa \bar u-u_0$, and note that $v$ satisfies the conditions of Proposition \ref{prop-weak-comparison}. Thus we can conclude that $v\geq 0$ in $\overline{\mathbb R^2_+}$ so that $u_0\leq\kappa\bar u$. An analogous argument gives that $-u_0<\kappa\bar u$. We have proved that $\abs{u_0}<\kappa\bar u$ on $\overline{\mathbb R^2_+}$ and, in particular, $\abs{w(x)}<\kappa\phi'(x)$ for all $x\in\mathbb R$.\\

The next step is to decrease $\kappa$ up to the first point where $\kappa\bar u$ touches $\abs{u_0}$: assume, without loss of generality, that for some $\kappa^*\geq 0$ there exists $(x_0,y_0)\in\overline{\mathbb R^2_+}$ such that
\be\label{functions-touch}\kappa^* \bar u=u_0 \quad \mbox{at }(x_0,y_0)\quad \mbox{and}\quad \kappa^*\bar u \ge u_0 \quad \mbox{on}\quad \overline{\Omega}.\ee
We set $d_1:=\max\{W''(\phi),\tau\}$, and $d_2:=d_1-W''(\phi)$. We note that both are non-negative functions on $\mathbb R$ and  $d_1(x)\geq \tau$ for all $x\in\mathbb R$. Let $v:=\kappa^*\bar u-u_0\ge 0$, that is a weak supersolution of
\begin{equation*}\left\{\begin{split}
&\Delta v = 0 \quad &\mbox{ in } \Omega,\\
&\frac{\partial  v}{\partial \nu} +d_1 v=d_2 v \geq 0\quad &\mbox{ on }\partial\Omega.
\end{split}\right.\end{equation*}
We are in the hypothesis of Lemma \ref{lema-comparison}, that can be applied because the functions have enough regularity (in particular $v\in\mathcal C^{1,\beta}_{\mbox{loc}}(\overline\Omega)$). However, because of \eqref{functions-touch} we necessarily must have $v\equiv 0$. We have shown that $u_0\equiv\kappa^*\bar u$, which at the boundary translates as $w\equiv\kappa^*\phi'$.\\

\emph{Step 6: Conclusion.}
The multiplier $\lambda$ in \eqref{eq::100} must be non-negative. However, we have shown that $\lambda\neq 0$, so then
\be\label{eq:100}
Q(u)\geq Q(u_0)=\lambda=: C_0>0 \quad\mbox{for all }u\in H_0 \mbox{ with }\int_{\partial\Omega} u^2=1.\ee
Finally, substituting
$$u_1:=\frac{u}{\sqrt{\int_{\partial\Omega} u^2}}$$
for any $u\in H_0$ into \eqref{eq:100}
finishes the proof of Lemma \ref{lem:1}.

\qed

\subsection{Proof of Theorem \ref{th::6} - regularity}

Here we seek a solution for
\begin{equation}\label{eq:20}
L\psi -W''(\phi)\psi=  g
\end{equation}
where
$$g:=\phi'+\eta(W''(\phi)-W''(0)) \quad \mbox{with}\quad \eta=\frac{\int (\phi')^2}{W''(0)}.$$
It is easy to check that
$$g\in H^\half(\mathbb R),\quad\int_{\partial\Omega} g\phi'=0.$$

We define also the continuous linear functional on $H$ given by
$$Ev:=-\int_{\partial\Omega} gTv.$$
The previous Corollary \ref{cor-coercivity} asserts that the bilinear functional $B$ from \eqref{bilinear-functional} is coercive on the space $H_0$. Then, Lax-Milgram theorem can be applied to obtain a unique weak solution $u_0\in H$ such that $B(u_0,v)=Ev$ for all $v\in H$.
Moreover, the solution depends continuously on the initial datum:
$$\norm{u_0}_H\leq C \norm{g}_{H^\half(\mathbb R)}.$$
We set $\psi:=Tu_0\in H^\half(\mathbb R)$.\\

The following theorem summarizes its regularity and properties:

\begin{theo}  The solution $\psi\in H^\half(\mathbb R)$ of (\ref{eq:20}) has regularity $\mathcal C^{1,\beta}_{\mbox{loc}}(\mathbb R)\cap L^\infty (\R)$,  satisfies $\psi(\pm\infty)=0$ and $\norm{\psi'}_{L^{\infty}(\mathbb R)}<\infty$.
\end{theo}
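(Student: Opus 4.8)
The plan is to upgrade the weak solution $\psi=Tu_0$ already produced by coercivity and Lax--Milgram (so that $u_0\in H$ is harmonic, $u_0=P*_x\psi$, and $B(u_0,v)=-\int_{\partial\Omega}gTv$ for every $v\in H$, i.e. $\psi$ is a weak solution of $L\psi-W''(\phi)\psi=g$ in the sense of Definition~\ref{defi-weak-solution}) by means of the regularity results of Section~\ref{section-half-Laplacian}. The only data we need about the right-hand side $g=\phi'+\eta(W''(\phi)-W''(0))$ and about $d:=W''(\phi)$ are an $L^p$ bound for $g$ uniform in $p$ large, and global $\mathcal C^\beta$ bounds for $g$ and $d$; both follow from the known behaviour of the layer solution.

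First I would record these. Since $\phi'\geq 0$, $\phi'\in L^\infty(\R)$ and $\int_\R\phi'=1$, one has $\norm{\phi'}_{L^p(\R)}\leq\max\{1,\norm{\phi'}_{L^\infty(\R)}\}$ for all $p\geq1$. As $W$ is $1$-periodic, $W''(1)=W''(0)$, so combining $\abs{\phi(x)-H(x)}\leq C/(1+\abs{x})$ from \eqref{growth-not-sharp} with the $\beta$-H\"older continuity of $W''$ gives $\abs{W''(\phi(x))-W''(0)}=\abs{W''(\phi(x))-W''(H(x))}\leq C(1+\abs{x})^{-\beta}$. Hence $\norm{g}_{L^p(\R)}\leq c$ uniformly for $p\geq p_0$ provided $p_0\beta>1$; this is exactly hypothesis \eqref{eq::904}, so Corollary~\ref{cor-Moser} applies and gives $\psi\in L^\infty(\R)$ and $u_0=P*_x\psi$ bounded on $\Omega$. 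For the H\"older statement I also use that $\phi$ is globally Lipschitz (by \eqref{eq::5}) and $W''\in\mathcal C^\beta(\R)$, so $d=W''(\phi)\in\mathcal C^\beta(\R)$ with bounded norm; and that $\phi'$ is itself globally Lipschitz, which I would obtain by applying Corollary~\ref{cor-regularity} to $\bar u=P*_x\phi'$, a weak solution of $Lw-W''(\phi)w=0$ by Lemma~\ref{properties}. Thus $g\in\mathcal C^\beta(\R)$ with bounded norm as well.

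With this, the regularity is immediate. We have $u_0\in L^\infty(\Omega)$, $u_0\in W^{1,2}(B_R^+)$ for every $R$ (because $u_0\in H$ and $\int_{B_R^+}u_0^2\leq R\int_{\partial\Omega}\psi^2$), and $d,g\in\mathcal C^\beta(\R)$ with uniform bounds, so Corollary~\ref{cor-regularity} yields $u_0\in\mathcal C^{1,\beta}(\R\times[0,R])$ with $\norm{\nabla u_0}_{L^\infty(\R\times[0,R])}\leq C_R$. Taking traces on $\{y=0\}$ gives $\psi\in\mathcal C^{1,\beta}_{\mathrm{loc}}(\R)$ and $\norm{\psi'}_{L^\infty(\R)}=\norm{\partial_x u_0(\cdot,0)}_{L^\infty(\R)}<\infty$. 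Finally, for $\psi(\pm\infty)=0$: from $\psi\in H^{\frac12}(\R)$ we get $\psi\in L^2(\R)$, and from the previous bound $\psi$ is Lipschitz, hence uniformly continuous; a uniformly continuous function in $L^2(\R)$ must tend to $0$ at $\pm\infty$ (otherwise $\abs{\psi}\geq\delta/2$ would hold on infinitely many disjoint intervals of a fixed length, contradicting $\psi\in L^2$).

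I do not expect a serious obstacle beyond bookkeeping. The one point needing care is that $W''$ is only $\mathcal C^\beta$, not $\mathcal C^1$, so $g$ decays merely like $\abs{x}^{-\beta}$ and belongs to $L^p(\R)$ only for $p>1/\beta$; this is harmless since Corollaries~\ref{cor-Moser} and~\ref{cor-regularity} need, respectively, only uniform $L^p$-control for large $p$ and global $\mathcal C^\beta$ bounds — all of which we have. The other thing to check carefully is that $\phi$ and $\phi'$ genuinely carry the global H\"older regularity invoked above, which is why I route that through Lemma~\ref{properties} and Corollary~\ref{cor-regularity} rather than through the purely local statements for the layer solution in Theorem~\ref{thm-layer-solutions}.
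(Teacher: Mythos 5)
Your proposal is correct and follows essentially the same route as the paper: a uniform $L^p$ bound on $g$ for $p>1/\beta$ (using the decay of $\phi-H$, the boundedness of $\phi'$, and the H\"older continuity of $W''$) feeding into Corollary \ref{cor-Moser} for the $L^\infty$ bound, then Proposition \ref{prop-regularity}/Corollary \ref{cor-regularity} for the $\mathcal C^{1,\beta}_{\mbox{loc}}$ regularity and the global bound on $\psi'$, and finally $\psi\in L^2(\R)$ together with uniform continuity to get $\psi(\pm\infty)=0$. The only difference is that you spell out the global $\mathcal C^\beta$ bounds on $d=W''(\phi)$ and $g$ (routing the global regularity of $\phi'$ through Lemma \ref{properties} and Corollary \ref{cor-regularity}), a verification the paper leaves implicit.
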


\begin{proof}
Note that $u_0\in H$ so, in particular, $\psi\in H^\half(\mathbb R)$ (and is also in all $L^p(\mathbb R)$ for $p>1$).\\

\emph{Claim 1:} $\psi$ is bounded; as a consequence, also $u_0$. This follows from Corollary \ref{cor-Moser}, after estimating the norm of $g$. We use the asymptotic behavior and boundedness of $\phi,\phi'$ from Theorem \ref{th::5}, and the H\"{o}lder regularity of $W''$. Then
$$\int_{\mathbb R} \abs{g}^{p}\leq \int_{(-R,R)} c^{p}+\int_{\mathbb R\backslash (-R,R)} \frac{c^{p}}{\abs{x}^{p\beta}}\leq c^{p}\left[ 2R+\frac{2}{(p\beta-1)R^{p\beta-1}}\right].$$
In particular, for $R=1$, and $p>\frac{1}{\beta}$ we have,
$$\norm{g}_{L^{p}(\mathbb R)}\leq K c.$$

\emph{Claim 2: } $\mathcal C^{1,\beta}_{\mbox{loc}}(\mathbb R)$ regularity for $\psi$ follows from Proposition \ref{prop-regularity},
while the uniform bound for $\psi'$ is a consequence of Corollary \ref{cor-regularity}. For the values of $\psi$ at infinity, just note that $\psi$ is a smooth function in $L^2(\mathbb R)$.\end{proof}
\qed

Finally, a weak solution $\psi\in H^\half(\mathbb R)\cap \mathcal C^{1,\beta}_{\mbox{loc}}(\mathbb R)\cap L^\infty (\R)$ of \eqref{eq:20} is actually a viscosity  solution of the equation \eqref{eq::7}, thus completing the proof of Theorem \ref{th::6}. This is so because of Lemma \ref{lemma-equivalence}.\\



\noindent {\bf Acknowledgements}\\
This work was supported by the ACI ``Dislocations'' (2003-2007), and by Spanish projects MTM2008-06349-C03-01 and GenCat 2009SGR345.




\end{document}